\newtheorem*{theoA}{Theorem A}
\newtheorem*{theoB}{Theorem B}
\newtheorem*{theoC}{Theorem C}
\newtheorem{theo}{Theorem}[section]
\newtheorem{prop}[theo]{Proposition}
\newtheorem{defi}[theo]{Definition}
\newtheorem{lemm}[theo]{Lemma}
\newtheorem{coro}[theo]{Corollary}
\newtheorem{conj}[theo]{Conjecture}
\theoremstyle{definition}
\newtheorem{rema}[theo]{Remark}
\newcommand{\mb}{\mathbb}
\newcommand{\mc}{\mathcal}
\newcommand{\mf}{\mathfrak}
\newcommand{\mbf}{\mathbf}
\newcommand{\mscr}{\mathscr}
\newcommand{\ra}{\rightarrow}
\newcommand{\mrm}{\mathrm}
\DeclareMathOperator{\Ad}{Ad}
\DeclareMathOperator{\Ext}{Ext}
\DeclareMathOperator{\Hom}{Hom}
\DeclareMathOperator{\tr}{tr}
\DeclareMathOperator{\Gal}{Gal}
\DeclareMathOperator{\Ind}{Ind}
\DeclareMathOperator{\GL}{GL}
\DeclareMathOperator{\Fil}{Fil}
\DeclareMathOperator{\nr}{nr}
\DeclareMathOperator{\cris}{cris}
\DeclareMathOperator{\nc}{nc}
\DeclareMathOperator{\an}{an}
\DeclareMathOperator{\ord}{ord}
\DeclareMathOperator{\rig}{rig}
\DeclareMathOperator{\cl}{cl}
\DeclareMathOperator{\wt}{wt}
\DeclareMathOperator{\sm}{sm}
\DeclareMathOperator{\alg}{alg}
\DeclareMathOperator{\Set}{Set}
\DeclareMathOperator{\univ}{univ}
\DeclareMathOperator{\Sen}{Sen}
\DeclareMathOperator{\per}{per}
\DeclareMathOperator{\lp}{lp}
\DeclareMathOperator{\Ver}{Ver}
\DeclareMathOperator{\pol}{pol}
\DeclareMathOperator{\la}{la}
\DeclareMathOperator{\id}{id}
\title[Ordinary representations for $\mrm{U}(3)$]{Ordinary representations and companion points for $\mrm{U}(3)$ in the indecomposable case}
\date {\today}
\author{John Bergdall and Przemys\l aw Chojecki}
\email{bergdall@math.bu.edu}
\email{chojecki@math.jussieu.fr}
\begin{document}
\begin{abstract}
We prove that certain $p$-adic Banach representations, associated to local ordinary Galois representations, constructed by Breuil and Herzig appears in the completed cohomology of a definite unitary group in three variables. This confirms part of their conjecture. Our main technique is making use of $p$-adic automorphic forms for definite unitary groups and the eigenvarieties which parameterize them.
\end{abstract}

\maketitle

\addtocontents{toc}{\protect\setcounter{tocdepth}{1}}
\tableofcontents

\section{Introduction}

The $p$-adic Langlands program for the group $\GL_2(\mbf{Q}_p)$ is now well understood. The local correspondence can be summarized by saying there is a bijection between local Galois representations $\rho_p: \Gal(\overline{\mbf{Q}}_p / \mbf{Q}_p) \ra \GL_2(\overline{\mbf Q}_p)$ and certain $p$-adic Banach spaces $\Pi(\rho)$ with an action of $\GL_2(\mbf{Q}_p)$. The reader should consult \cite{cdp} for the most recent results. The global portion of the problem is that the local correspondence is realized within the completed cohomology of a tower of modular curves \cite{em5}. Unfortunately, not much is known beyond $\GL_2(\mbf Q_p)$. For example, even if $E$ is a finite extension of $\mbf Q_p$ then the category of representations of $\GL_2(E)$ on $p$-adic Banach spaces remains mysterious.

Breuil and Herzig in \cite{bh} have recently pursued the construction of interesting $p$-adic Banach space representations associated to local Galois representations by concentrating on the ordinary case. Let $L/\mbf Q_p$ be a finite extension. For a representation $\rho_p: \Gal(\overline{\mbf Q}_p/\mbf Q_p) \rightarrow \GL_n(L)$ which is generic and ordinary Breuil and Herzig constructed an admissible continuous unitary representation $\Pi(\rho_p)^{\ord}$ on an $L$-Banach space by taking successive extensions of unitary principal series. Their recipe takes as key input the splitting behavior of $\rho_p$ and thus forsees compatibility between the Galois and automorphic sides of the $p$-adic Langlands program for $\GL_n(\mbf Q_p)$.

They have further conjectured that if $\rho_p$ gives rise to $\Pi(\rho_p)$ under a conjectural $p$-adic local Langlands correspondence for $\GL_n(\mbf Q_p)$ then $\Pi(\rho_p)^{\ord}$ should account for the maximal subrepresentation of $\Pi(\rho_p)$ which is built out of principal series alone. That there is, or may be, a discrepancy between $\Pi(\rho_p)$ and $\Pi(\rho_p)^{\ord}$ is an interesting feature of the situation beyond $\GL_2(\mbf Q_p)$. Nevertheless, this paper is concerned with the representation $\Pi(\rho_p)^{\ord}$ and the insights it brings to the $p$-adic local Langlands program.

\subsection*{Results}
In this work we explore global aspects of  \cite{bh}. Our main results are a generalization, from ${\GL_2}_{/\mbf Q}$ to unitary groups in three variables, of the work of Breuil and Emerton \cite{be}. That work  showed, prior to Emerton's larger work \cite{em1} on local-global compatibility, that a tower of modular curves can detect the splitting behavior of the $p$-adic Galois representation attached to a $p$-ordinary Hecke eigenform.

Let us now quickly setup some notation in order to state our results. Let $F/F^+$ be a CM extension of number fields in which $p$ is totally split and denote $G = \mrm U(3)$ a definite unitary group in three variables attached to $F/F^+$. Let us fix a compact open subgroup $K^p \subset G(\mbf{A} ^{p\infty} _F)$. Since $G$ is compact at the real places, the interesting cohomological degree is zero. We define the completed cohomology group of Emerton
\begin{equation*}
\widehat{H}^0(K^p)_L = \left( \varprojlim _n \varinjlim _{K_p} H^0(G(F^+) \backslash G(\mbf{A}^{\infty}_{F^+})/K_pK^p), \mbf{Z}_p/p^n \mbf{Z}_p)\right) \otimes _{\mbf{Z}_p} L
\end{equation*}
where $K_p$ runs over open compact subgroups of $G(\mbf{Q}_p)$. Here $L$ is a finite extension of $\mbf Q_p$ which we allow to change at will. The space $\widehat H^0(K^p)_L$ can be seen as a model for $p$-adic automorphic representations on $G$. It is an $L$-Banach space with continuous commuting actions of the group $\GL_3(\mbf Q_p)$ and a Hecke algebra $\mc H(K^p)^{\nr}$.

If $\pi$ is an automorphic representation on $G$ then it has an associated (in the usual sense) global Galois representation $\rho = \rho _{\pi}: \Gal(\bar{F}/F) \ra \GL_3(L)$ (extending $L$ if necessary). If $\pi$ has tame level $K^p$ then $\rho_\pi$ is unramified away from a finite set depending on $K^p$. We let $\lambda: \mc H(K^p)^{\nr} \rightarrow L$ be the Hecke eigenvalue associated to $\pi$, equivalently the Frobenius eigenvalues of $\rho$ at the unramified places. We refer to $\rho$ as automorphic if it arises in this way.

For each place $v \mid p$ in $F^+$, we write $v = \tilde v \tilde v^c$ as places of $F$, and consider the local Galois representation $\rho_v:=\rho_{\tilde v} :  \Gal(\overline{F}_{\tilde v}/{F_{\tilde v}}) \simeq \Gal(\overline{\mbf Q}_p/\mbf Q_p) \rightarrow \GL_3(L)$. If $\rho_{\tilde v}$ is generic and ordinary then the same is true for $\rho_{\tilde v^c}$. The $\GL_3(\mbf Q_p)$-representation $\Pi(\rho_{\tilde v})^{\ord}$ will be completely described in Section \ref{subsec:unit-comp} but for now we note that it depends only on $v \mid p$ in $F^+$ and that its socle filtration reflects the decomposability of $\rho_v$. We denote it simply by $\Pi(\rho_v)^{\ord}$. The following is our main result (see Theorem \ref{thm:we-did-it}). It is a weak form of the \cite[Conjecture 4.2.2]{bh}.

\begin{theoA}
Let $\rho$ be automorphic and suppose that for all $v \mid p$, $\rho_v$ is generic ordinary and totally indecomposable. Then there exists a closed embedding
\begin{equation*}
\widehat{\bigotimes}_{v \mid p} \Pi(\rho_v)^{\ord} \hookrightarrow \widehat H^0(K^p)_L ^{\lambda}.
\end{equation*}
\end{theoA}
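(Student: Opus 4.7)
The plan is to produce the embedding layer by layer, matching the socle filtration of $\widehat{\bigotimes}_{v \mid p} \Pi(\rho_v)^{\ord}$ inside $\widehat{H}^0(K^p)_L^{\lambda}$, using the eigenvariety $\mc E$ of $G$ at tame level $K^p$ as the main technical tool. By the Breuil--Herzig construction each irreducible subquotient of $\Pi(\rho_v)^{\ord}$ is a unitary continuous principal series of $\GL_3(\mbf Q_p)$ indexed by a compatible ordering $w_v$ of the ordinary refinement of $\rho_v$, so the irreducible constituents of the completed tensor product are parametrized by tuples $w=(w_v)_{v \mid p}$. The first step is to embed each such principal series into $\widehat{H}^0(K^p)_L^{\lambda}$. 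For the trivial ordering this is essentially classical: the ordinary automorphic form attached to $\lambda$, combined with Emerton's ordinary part functor $\mrm{Ord}_B$ applied to completed cohomology and a universal unitary completion, yields the embedding. For non-trivial $w$ I would use \emph{companion points} on $\mc E$---classical finite-slope points whose associated Galois representation is still $\rho$ but whose triangulation at $p$ is governed by $w$. The total indecomposability hypothesis is used here to guarantee that all the required companion points exist and to match their refinements with the constituents actually appearing in $\Pi(\rho_v)^{\ord}$.

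The second step is to produce the non-trivial extensions between these principal series that are predicted by the socle filtration of $\widehat{\bigotimes}_v \Pi(\rho_v)^{\ord}$. The mechanism is infinitesimal deformation along $\mc E$: at a companion point $x_w$, a tangent vector pointing toward an adjacent $x_{w'}$ yields a length-two extension of the two associated principal series inside the locally analytic vectors of $\widehat{H}^0(K^p)_L^{\lambda}$, made visible via Emerton's locally analytic Jacquet functor $J_B$. By local-global compatibility on $\mc E$, these tangent directions are controlled by trianguline deformations of $\rho_v$, and the total indecomposability of $\rho_v$ ensures, via a Galois-cohomological computation paralleling the Breuil--Herzig recipe, that exactly the required directions exist and that the resulting extensions are non-split.

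The principal obstacle is the assembly step: turning these first-order, pairwise extensions into a single closed embedding of the entire filtered Banach representation $\widehat{\bigotimes}_v \Pi(\rho_v)^{\ord}$. This requires a higher-order infinitesimal analysis of $\mc E$ at the companion points, together with an obstruction computation in Galois cohomology where total indecomposability must be invoked again to rule out unwanted splittings in the iterated extensions. Independently, the deformations at distinct places $v \mid p$, which are a priori uncoupled, must be shown to interlock correctly under $\widehat{\bigotimes}_{v \mid p}$, reflecting the tensor factorization of the local ordinary parts. A final density or duality argument would then upgrade the resulting locally analytic embedding to a closed embedding of $L$-Banach representations.
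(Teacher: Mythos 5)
Your overall architecture --- realize each irreducible constituent via a point on the eigenvariety, then build the extensions, then assemble --- does not match how the argument actually closes, and the place where you yourself flag ``the principal obstacle'' is a genuine gap. The paper never constructs the extensions one at a time by infinitesimal deformation along the eigenvariety, and I do not see how to make that step rigorous: a tangent vector at a point of $X$ does not by itself produce a non-split extension of two continuous principal series inside $\widehat H^0(K^p)^\lambda_L$, and iterating this to recover the full filtered Banach representation would require an obstruction theory that is not available. The paper's route avoids the problem entirely: by the Breuil--Emerton computation for $\GL_2(\mbf Q_p)$ (the identification $E(\psi,\psi') \simeq \Ind(\psi\varepsilon z^{-h}\otimes\psi z^h)^{\an,\wedge}$), induced up a maximal parabolic, each ``half'' $\Pi(\rho_v,\sigma)_{\alpha_i}$ of $\Pi(\rho_v)^{\ord}$ is itself the universal unitary completion of a \emph{single} locally analytic principal series $I_{\an}(z_{\nc}^{(\sigma)})$ attached to a simple refinement $\sigma$ (Proposition \ref{prop:identification}). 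The extensions are therefore already packaged inside one object, and a single map out of it is produced in one stroke by the adjunction formula (Theorem \ref{theo:analytic-maps}) applied at the classical point $z_{\nc}^{(\sigma)}$, followed by the universal property of the unitary completion. The assembly step is then not an obstruction computation but the multiplicity statement of Proposition \ref{prop:multiplicity}(3): $\Hom$ from the amalgamated product over the common socle $I_{C^0}(z_{\nc}^{(1)})$ equals $\Hom$ from the socle, proved via Chenevier's classicality/\'etaleness criterion and a dimension count of classical eigenspaces.

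You have also inverted the role of the hypothesis. In the totally indecomposable case the constituents of $\Pi(\rho_v)^{\ord}$ are indexed by the \emph{simple} accessible refinements of the same unramified $\pi_{\Sigma_p}$; these classical points $z_{\nc}^{(\sigma)}$ always exist and are not companion points of one another (they have different smooth parts $\theta^{\nc,(\sigma)}$, whereas a companion point in the sense of Definition \ref{defi:comp-pts} has the same smooth part and a non-dominant weight). Total indecomposability is used, via the weak admissibility computation of Lemma \ref{lemm:simple-trans-non-critical}, to show these simple refinements are \emph{non-critical}, hence have \emph{no} companion points and are \emph{not bad} (Corollary \ref{coro:simple-pts-not-bad}) --- exactly the condition under which the adjunction formula is an isomorphism rather than merely a map. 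Companion points only enter in Theorem B, where the failure of total indecomposability forces some simple refinement to be critical. Without the adjunction formula at non-bad points and without the realization of $\Pi(\rho_v)^{\ord}$ as an amalgam of universal unitary completions, your proposal does not produce the embedding.
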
 
Here, the superscript $\lambda$ on the right-hand side refers to the $\lambda$-eigenspace for the Hecke action. Totally indecomposable means, by definition, that every subquotient of $\rho_v$ is indecomposable. Some authors may use the term ``maximally non-split''. We note that Breuil and Herzig were able to independently prove Theorem A (in a revision of their preprint, see  \cite[Theorem 4.4.8]{bh}) under  additional hypothesis, most notably that the the mod $p$ reduction $\overline \rho_v$ is totally indecomposable at each place $v \mid p$ as well. However, our technique is completely different from theirs and we have also been able to weaken the hypothesis to allow just indecomposability in the case where $F^+ = \mbf Q$. That is the following result (see Theorem \ref{theo:F^+=Q}). It is also part of the conjectures of Breuil and Herzig.

\begin{theoB}\label{theo:theoB}
Suppose that $F^+ = \mbf Q$, $\rho$ is automorphic and that $\rho_p$ is generic, ordinary and indecomposable. Then there exists a closed embedding
\begin{equation*}
\Pi(\rho_p)^{\ord} \hookrightarrow \widehat H^0(K^p)_L ^{\lambda}.
\end{equation*}
\end{theoB}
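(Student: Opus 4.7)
The plan is to adapt the eigenvariety-based proof of Theorem A to the weaker ``indecomposable but not totally indecomposable'' setting, exploiting that $F^+ = \mbf Q$ simplifies the global picture to a single place above $p$. Recall that $\Pi(\rho_p)^{\ord}$ is built by Breuil--Herzig as an amalgam of unitary principal series $\Pi_P^\sharp$ indexed by parabolics $P \supseteq B$ of $\GL_3(\mbf Q_p)$, with amalgamation arrows reflecting nonsplit $\Ext^1$'s among the sub-Galois representations of $\rho_p$. When $\rho_p$ is merely indecomposable, some two-dimensional subquotient splits as a direct sum of characters, one arrow in the diagram trivializes, and the corresponding stage of the socle filtration of $\Pi(\rho_p)^{\ord}$ acquires a semisimple direct summand. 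The first step is to catalog exactly which principal series constituents and which nontrivial extensions actually survive in $\Pi(\rho_p)^{\ord}$ under this weaker hypothesis.

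The second step is to realize each surviving principal series $\Pi_P^\sharp$ as a closed subrepresentation of $\widehat H^0(K^p)_L^\lambda$. For each refinement of $\rho_p$ compatible with a parabolic $P$ in the diagram, one gets a classical companion point on the tame-level eigenvariety $X(K^p)$; passing to locally analytic vectors, the Jacquet module at that point produces a closed embedding of $\Pi_P^\sharp$ into $\widehat H^0(K^p)_L^\lambda$, along the lines of Breuil--Emerton \cite{be}. Because $F^+ = \mbf Q$ there is a single local representation to handle at $p$, so $X(K^p)$ is three-dimensional and the geometry of these companion points can be analyzed directly.

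The third and most delicate step is to glue the separate embeddings into a single embedding of $\Pi(\rho_p)^{\ord}$. Nontrivial extensions dictated by the Breuil--Herzig diagram are realized via tangent vectors at the relevant classical points of $X(K^p)$, exactly as in the proof of Theorem A. What must additionally be checked in the indecomposable case is that the split amalgamations, which are direct sums on the Breuil--Herzig side, are not accidentally extended inside the completed cohomology -- equivalently, certain unwanted classes in $\Ext^1$-groups of principal series inside $\widehat H^0(K^p)_L^\lambda$ must be ruled out. I would carry this out using a Galois deformation-theoretic description of the local ring of $X(K^p)$ at the relevant point, which for $F^+ = \mbf Q$ is expected to be formally smooth of the expected dimension; splitness of a subquotient of $\rho_p$ then translates into the vanishing of a specific class on the eigenvariety side, which is precisely what kills the accidental extension. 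This $R = \mathbb{T}$-style control on the local geometry of $X(K^p)$ is the main obstacle, and the single-place hypothesis $F^+ = \mbf Q$ is what makes it tractable and explains why the same argument does not obviously extend to general $F^+$.
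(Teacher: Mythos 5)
Your steps 1 and 2 track the paper's strategy: catalog the constituents of $\Pi(\rho_p)^{\ord}$ and realize each unitary principal series inside $\widehat H^0(K^p)_L^\lambda$ via the adjunction formula at suitable points of the eigenvariety. But two things go wrong. First, in step 2 the relevant points are not ``classical companion points'': when $\rho_p$ is indecomposable but not totally indecomposable, exactly two of the six refined classical points $z_{\nc}^{(\sigma)}$ are critical (Lemma \ref{lemm:simple-trans-non-critical}), those classical points are themselves \emph{bad}, and the adjunction formula must be applied instead at their \emph{non-classical} companion points $s_{\alpha_\tau}\cdot z_{\nc}^{(\sigma)}$ of non-dominant weight $\tau\cdot k$. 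Their existence is what requires the deformation-theoretic input (ramification of the weight map, Theorem \ref{theo:consant-weights}, combined with the BGG resolution and Chenevier's classicality criterion), and their non-badness follows from the \emph{uniqueness} of the companion point (Proposition \ref{prop:good-points}). This is how the splitting of $\rho_p$ is detected: it produces extra points on the eigenvariety, not vanishing extension classes.

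Second, your step 3 attacks a non-problem with machinery that is neither available nor needed. In the merely indecomposable case $\Pi(\rho_p)^{\ord}$ is an honest direct sum $\Pi_1\oplus\Pi_2$ of two of the representations $I_{\an}(z_{\nc}^{(\sigma)})^{\wedge}$ (Definition \ref{defi:ord} together with Proposition \ref{prop:identification}); to embed a direct sum one does not need to rule out ``accidental'' non-split extensions between the summands inside $\widehat H^0(K^p)_L^\lambda$ --- one only needs each summand to embed and the two images to intersect trivially. The paper gets injectivity of each map from Proposition \ref{prop:multiplicity}(2) and trivial intersection from the fact that $\Pi_1$ and $\Pi_2$ have pairwise distinct Jordan--H\"older constituents, so any submodule of $\Pi_1\oplus\Pi_2$ killed by the combined map would inject into both summands and force a common constituent. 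No $R=\mathbb{T}$-style theorem or formal smoothness statement for $X_{K^p}$ is proved or assumed anywhere; as written, your third step rests the argument on an unestablished and, for the purpose of producing a closed embedding, unnecessary input.
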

In order to contrast the two results, it is helpful to note that if $\rho_p$ is totally indecomposable then $\Pi(\rho_p)^{\ord}$ contains a unique irreducible principal series whereas if $\rho_p$ is indecomposable, but not totally indecomposable, then the $\GL_3(\mbf Q_p)$-socle of $\Pi(\rho_p)^{\ord}$ is the {\em direct sum of two principal series}. Thus, Theorem B implies that the completed cohomology is detecting the decomposability of $\rho_p$ via the appearance of an extra principal series representation, just as in the case of ${\GL_2}_{/\mbf Q}$.

Let us now expose our method for proving Theorems A and B. Our techniques are both similar and different from \cite{be}. The main overlap is in purely representation-theoretic results while the main departure is in our ability to apply said results. For simplicity we restrict to the case where $F^+ = \mbf Q$ for the rest of the introduction.

The keystone for our approach is Emerton's $p$-adic interpolation of the Jacquet functor. To shorten notation we suppress the tame level and coefficient field and  denote $\widehat H^0_{\an}=\widehat H^0(K^p)_{\an}$ the locally analytic vectors appearing in the completed cohomology, and $\widehat H^{0,\lambda}_{\an}$ the $\lambda$-eigenspace for the action of $\mc H(K^p)^{\nr}$. Since this is a locally analytic representation of $\GL_3(\mbf Q_p)$ we can apply the Jacquet functor \cite{em2,em3} and obtain a locally analytic representation  $J_B(\widehat H^{0,\lambda}_{\an})$ of the diagonal torus $T$ (depending on the choice of Borel $B \supset T$). Emerton's general results (e.g. the main result of \cite{em3}) create a bridge between principal series appearing inside $\widehat H^{0,\lambda}_{\an}$ and certain $T$-eigenvectors appearing inside $J_B(\widehat H^{0,\lambda}_{\an})$. Thus in order to control the representation $\widehat H^0(K^p)^{\lambda}_{L}$ one should study eigensystems appearing in $J_B(\widehat H^{0,\lambda}_{\an})$.

Classical and $p$-adic automorphic forms for $G$ produce $T$-eigensystems inside the Jacquet module. In more technical language, we use an eigenvariety to study $J_B(\widehat H^{0,\lambda}_{\an})$. Reconsider the automorphic Galois representation $\rho = \rho_\pi$ as above. If we assume that $\rho$ is crystalline at $p$ then the orderings of crystalline eigenvalues of $\rho_p$ give naturally rise to locally analytic characters $\chi$ called refinements (see Section \ref{subsec:galois-refine}). A classical calculation shows that $T$-eigenspace $J_B^\chi(\widehat H^{0,\lambda}_{\an})$ is non-zero. 

If one considers all pairs $(\chi,\lambda)$, with $\chi$ a continuous (thus locally analytic character) of $T$ and $\lambda$ a character of $\mc H(K^p)^{\nr}$, for which $J_B^\chi(\widehat H^{0,\lambda}_{\an}) \neq 0$ then one arrives at a rough definition of the eigenvariety $X_{K^p}$ associated to $G$ and the tame level $K^p$. It is a rigid analytic variety, of equidimension three in this case, and can be interpreted as parameterizing finite slope $p$-adic automorphic forms for $G$. We recall its relevant properties in the text. It contains a multitude of points, called classical points, which arise from automorphic representations for $G$ as in the previous paragraph. 

In the text, see Definition \ref{defi:bad-points-new}, we define what it means for a point $(\chi,\lambda)$ to be ``bad''. The analogous definition, and the terminology, in the ${\GL_2}_{/\mbf Q}$ case is in \cite{be}. Our motivation comes from the theory of Verma modules and the definition reflects the famous results of  Verma, and Bernstein, Gelfand and Gelfand on composition series of Verma modules. To give some idea of what bad points look like, a classical $p$-refined eigenform $f$ is bad if and only if there exists an overconvergent companion form $g$ for $f$, i.e. a finite slope overconvergent $p$-adic modular form $g$ such that $\theta^{k-1}g = f$, where $k$ is the weight of $f$ and $\theta = qd/dq$ on $q$-expansions. Our main representation-theoretic result is the following adjunction formula (see Theorem \ref{theo:analytic-maps}) involving locally analytic principal series and the Jacquet module.

\begin{theoC}
Suppose that $(\chi,\lambda) \in X_{K^p}(L)$ is not bad. Then
\begin{equation*}
\Hom_{\GL_3(\mbf Q_p)}(\Ind(\chi)^{\an}, \widehat H^0(K^p)^\lambda_{L,\an}) \simeq J_B^\chi(\widehat H^0(K^p)^\lambda_{L,\an}).
\end{equation*}
\end{theoC}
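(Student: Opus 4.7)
The plan is to derive Theorem C from Emerton's adjunction formula between locally analytic parabolic induction and the $p$-adic Jacquet functor \cite{em3}, removing the residual balance condition by means of the not-bad hypothesis.

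For any admissible locally analytic representation $V$ of $\GL_3(\mbf Q_p)$, Emerton's work produces a canonical injection
$$\Hom_{\GL_3(\mbf Q_p)}(\Ind(\chi)^{\an}, V) \hookrightarrow J_B^\chi(V)$$
obtained by pairing a map $\phi$ with a distinguished ``highest-weight'' vector in the principal series and reading off the resulting $T$-eigenvector in the Jacquet module. This map is always well-defined and injective, and its image is identified with the subspace of ``balanced'' or ``essentially finite slope'' eigenvectors. Taking $V = \widehat H^0(K^p)^\lambda_{L,\an}$, the problem is reduced to showing that under the not-bad hypothesis every $T$-eigenvector in $J_B^\chi(V)$ satisfies this balance condition.

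To analyze when an eigenvector $v \in J_B^\chi(V)$ extends to a $\GL_3(\mbf Q_p)$-equivariant map out of $\Ind(\chi)^{\an}$, one inspects the socle filtration of $\Ind(\chi)^{\an}$, which is controlled by the locally analytic analog of Bernstein--Gelfand--Gelfand theory on composition series of Verma modules. The successive quotients are built out of locally analytic characters of the form $w \cdot \chi$ (dot action) with $w$ ranging over a suitable subset of the Weyl group of $\GL_3$. Each step in attempting to build such an extension yields an obstruction class in $\Ext^1$ between locally analytic representations, and by admissibility these classes are detected by $T$-eigenvectors in $J_B^{w \cdot \chi}(V)$ for the same Hecke eigensystem $\lambda$.

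The definition of bad (Definition \ref{defi:bad-points-new}) is crafted exactly so that $(\chi,\lambda)$ is not bad if and only if none of the relevant companion characters $w \cdot \chi$ arise as a point of the eigenvariety $X_{K^p}$ over $\lambda$, i.e.\ $J_B^{w \cdot \chi}(\widehat H^0(K^p)^\lambda_{L,\an})=0$ for every non-trivial $w$ contributing to the BGG filtration. With all obstructions vanishing, every element of $J_B^\chi$ extends, the injection above becomes an isomorphism, and Theorem C follows. The main obstacle will be the precise dictionary between the Weyl group combinatorics governing the BGG filtration of $\Ind(\chi)^{\an}$ and the combinatorial definition of bad points on $X_{K^p}$, together with a careful verification that the balance condition implicit in Emerton's adjunction is equivalent to the vanishing of precisely those companion Jacquet eigenspaces; this requires exploiting the admissibility of $\widehat H^0(K^p)^\lambda_{L,\an}$ as a locally analytic representation throughout.
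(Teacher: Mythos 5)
Your strategy is the same as the paper's: pass through Emerton's adjunction to identify $\Hom_{G}(\Ind(\chi)^{\an},-)$ with a space of $(\mf g,B)$-module maps out of (dual) Verma modules tensored with $\mc C_c^{\sm}(N,L_{\delta^{-1}})$, use the BGG description of the composition series of $M(\chi)$ and $M(\chi)^\vee$ in terms of simple constituents $L(\chi')$ with $\chi'$ strongly linked to $\chi$, and observe that the not-bad hypothesis is rigged so that $\Hom_{(\mf g,B)}(L(\chi')\otimes \mc C_c^{\sm}(N,L_{\delta^{-1}}),\widehat H^{0}(K^p)^{\lambda}_{L,\an})=0$ for every $\chi'\neq\chi$ strongly linked to $\chi$. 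Two small points you elide are handled in the paper: the relevant filtration is not the $G$-socle filtration of $\Ind(\chi)^{\an}$ (which is much harder to control) but the $(\mf g,B)$-composition series of the Verma modules sitting inside $\mc C_c^{\lp}(N,L_{\chi\delta^{-1}})$, and one must check that the classical BGG maps between Verma modules are automatically $T$-equivariant so that the classical theorem upgrades to the $(\mf g,B)$-setting (Lemma \ref{lem:themapalpha}).

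The genuine gap is in your treatment of the obstructions. The not-bad hypothesis kills exactly the $\Hom$-obstructions, i.e.\ the contributions of the constituents $L(\chi')$ to $\Hom$; it says nothing a priori about the connecting map into $\Ext^1_{\mf g}(M(\chi)^\vee/L(\chi),\widehat H^{0}(K^p)_{L,\an})$ that obstructs extending a map defined on $L(\chi)\otimes\mc C_c^{\sm}$ to all of $M(\chi)^\vee\otimes\mc C_c^{\sm}$ (the surjectivity of the map (3a) in the paper's diagram, which is precisely the "every eigenvector is balanced" direction you need). Your claim that "by admissibility these classes are detected by $T$-eigenvectors in $J_B^{w\cdot\chi}(V)$" is unjustified and false for a general admissible locally analytic $V$. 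The paper's proof of this step (Lemma \ref{lem:localisation}) requires two non-formal inputs beyond admissibility: first, that $\widehat H^{0}(K^p)_{L,\an}\simeq\mc C^{\la}(G(\mbf Z_p),L)^{\oplus r}$ as a $G(\mbf Z_p)$-module --- an injectivity statement special to degree-zero completed cohomology of a \emph{definite} group --- which yields $H^1(\mf b,\widehat H^{0}(K^p)_{L,\an}(-\mu))=0$ and hence $\Ext^1_{\mf g}(M(\mu),\widehat H^{0}(K^p)_{L,\an})=0$; and second, a localization and d\'evissage argument at the ideals $\mf p_\lambda$ and $\mf p^+_{\id}$, using that the Hecke operators act compactly on $\widehat H^{0}(K^p)_{L,\an}^{N^0,\mf t=\mu}$, to convert the resulting torsion statements into the vanishing of the relevant eigenspaces. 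Only after these reductions does the $\Ext^1$-obstruction become "detected by" companion Jacquet eigenspaces; without them your final step does not go through.
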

Here $\Ind(-)^{\an}$ is a locally analytic induction from a Borel. To orient the reader, Emerton's definition and construction of the Jacquet functor, plus \cite{em3}, provides a non-zero map from the left-hand side to the right-hand side (and not much more in general). Thus in order to prove Theorem C we have to show that away from the bad locus one has an isomorphism. In the case of ${\GL_2}_{/\mbf Q}$ this is \cite[Th\'eor\`eme 5.5.1]{be}.

Returning the situation of Theorems A and B, we calculate in Section \ref{sec:adjunction} that the representations $\Pi(\rho_p)^{\ord}$ can be built out of taking universal unitary completions of certain locally analytic principal series. Thus, the adjunction formula in Theorem C allows us to exhibit $\Pi(\rho_p)^{\ord}$ inside Hecke eigenspaces of the completed cohomology by exhibiting certain {\em good} points on the eigenvariety. Here the case of Theorem A departs slightly from the case of Theorem B. 

In the case of Theorem A, we show using Kisin's famous result on the $p$-adic interpolation of crystalline periods, that enough of the classical points with fixed unramified eigensystems $\lambda$ are not bad. At each such point we apply Theorem C and computations already mentioned shows that the corresponding principal series are enough to build $\Pi(\rho_p)^{\ord}$ inside the completed cohomology.

In the case of Theorem B, where there is a non-trivial splitting behavior of the Galois representation at $p$, the not bad classical points on the eigenvariety are not sufficient to produce $\Pi(\rho_p)^{\ord}$. The phenomena here is more complex. What happens, and this happens in the ${\GL_2}_{/\mbf Q}$ case as well, is that the splitting behavior of $\rho_p$ is reflected on the eigenvariety by the existence of certain non-classical ``companion points". Those points, analogous to companion forms of \cite{be}, may either be bad or not. Under the hypothesis of Theorem B, the situation is rigid enough so that we can show that there are sufficiently many good classical and non-classical companion points to explain the appearance of $\Pi(\rho_p)^{\ord}$. 

The symmetry between the splitting of the Galois representation at $p$ and the construction of non-classical points on the eigenvariety is explained in Section \ref{sec:com-bad}. Our technique follows that of \cite{ber2}, wherein the geometry of the eigenvariety explains simultaneously the existence of companion points and the splitting of the Galois representation.

To end this introduction, let us remark briefly on the wider context of this work. As indicated following Theorem C, the adjunction formula reduces our weak form of the Breuil-Herzig conjecture to the existence or not of certain Hecke eigensystems in spaces of $p$-adic automorphic forms. In the totally indecomposable case, this amounts to showing that certain Hecke eigensystems do not exist in spaces of $p$-adic automorphic forms. The converse, constructing ``overconvergent companion forms'', is a more serious matter.  For general conjectures on the existence of companion points, see \cite{br} (Breuil has also given a different adjunction formula, see Remark \ref{rema:breuil}, and studied the socle of the completed cohomology in the non-ordinary case).

Some of our methods are general enough to be applicable in other contexts. In many places the restriction to $n = 3$ is only used for brevity and concreteness at the moment. For example, the proof of Theorem C is completely conceptual and a similar result in greater generality should follow easily. We plan to have a sequel dealing with general definite unitary groups. We could also deal with some non-compact Shimura varieties. We only signal here that we can also prove similar results in the non-compact case for unitary groups $\mrm U(2,1)$. Details for that will appear elsewhere.

\subsection*{Organization}

In Section \ref{sec:eigenvarieties} we explain the theory of eigenvarieties. We mostly work with the constructions of Emerton \cite{em1}. At some point we also make a reference to work of Chenevier. Thus we recall two separate constructions of eigenvarieties.

Section \ref{sec:com-bad} contains two main points. The first is an expansion of work of the first author \cite{ber2} on the relationship between the splitting of $p$-adic Galois representations and the existence of companion points via the geometry of $p$-adic families. The rest of Section \ref{sec:com-bad} discusses the notion of companion points and bad points and their mutual relation. In the final part of Section \ref{sec:com-bad} we specialize to the generic ordinary locus on the eigenvariety. Beginning in this section we make use of points satisfying the condition $(\dagger)$ (see Definition \ref{defn:dagger}). These points are the largest locus on which we can apply sensibly our adjunction formula and so we attempted to maintain that level of generality throughout.

The most technical section of our work is Section \ref{sec:adjunction}. It deals entirely with proving the adjunction formula Theorem C. We have taken some care to give a conceptual proof of the adjunction formula. Thus we have also included some minor results that are surely well-known but whose proofs we had no reference for. We hope to prove a general version of the adjunction formula in the sequel to this paper.

Section \ref{sec:breuil-herzig-section} contains the proofs of Theorems A and B. Here we recall the conjecture of Breuil and Herzig and we explain our computations. In fact, we also give a supplementary realization of their representations in terms of unitary completions of locally analytic principal series which may be of independent interest.

\subsection*{Notations} We introduce notations which we will use constantly throughout the text. We fix an isomorphism $\overline{\mbf{Q}}_p \simeq \mbf{C}$. 

In general we will use $G$ for the unitary group and 
\begin{equation*}
G_{\Sigma_p} = G(F^+\otimes_{\mbf Q} \mbf Q_p) = \prod_{v \mid p} \GL_3(\mbf{Q}_p).
\end{equation*}
We will have the Borel $B_{\Sigma_p}$ of {\em upper} triangular matrices and its opposite $B^-_{\Sigma_p}$. The diagonal torus is $T_{\Sigma_p}$. The modulus character of $\GL_3(\mbf Q_p)$ is $\delta_{B(\mbf Q_p)}: T \rightarrow \mbf{Q}_p^\times$ given by $|\cdot|^2 \otimes 1 \otimes |\cdot|^{-2}$. We also have the modulus character $\delta_{B_{\Sigma_p}}$ of $B_{\Sigma_p}$. 

For an algebraic weight $k$ we denote by $\delta _k$ the corresponding highest weight character of $T$.
Let $S_3$ be the symmetric group. It comes with a natural order (the ``Bruhat order'') in which the length of an element $\ell(\sigma)$ is the length of the shortest word expression $\sigma$ as a product of the two simple tranpositions (12) and (23). Thus this is the same as the length in the Weyl group for $\GL_3$.

It will be important for us to use the infinitesimal actions of the Lie algebras of these groups. Thus we use $\mf g_{\Sigma_p}$, $\mf b_{\Sigma_p}$, $\mf t_{\Sigma_p}$, $\mf n_{\Sigma_p}^-$ (for the opposite unipotent to $\mf n_{\Sigma_p} = \mf b_{\Sigma_p}/\mf t_{\Sigma_p}$) etc. to denote the corresponding Lie algebras. We use $\mf t_{\Sigma_p}^{\ast} = \Hom(\mf t_{\Sigma_p}, \overline{\mbf Q}_p)$ to denote the dual space to $\mf t_{\Sigma_p}$ (i.e. the linear dual space of $\mf t_{\Sigma_p}\otimes_{\mbf Q_p} \overline{\mbf Q}_p$). If $\chi \in \mc T_{\Sigma_p}(\overline{\mbf Q}_p)$ then its differential $d\chi$ is an element of $\mf t_{\Sigma_p}^{\ast}$.  If we drop the subscript $\Sigma_p$ it is because we are talking about the group $\GL_3(\mbf Q_p)$. 

If $k$ is a weight (not necessarily algebraic) of ${\mf g}$ and $\sigma \in S_3$, we use $\sigma \cdot \delta$ to denote the ``dot action''
\begin{equation*}
\sigma \cdot k = \sigma(k + \rho_0) - \rho_0,
\end{equation*}
where $\rho_0 = (1,0,-1)$ is the half-sum of the positive roots. Here $\sigma$ acts on the right hand side by $\sigma((k_i)) = (k_{\sigma(i)})$ and thus $\sigma \cdot k$ is a {\em right} action.

Let $z: \mbf Q_p^\times \rightarrow \mbf Q_p^\times$ be the identity character. We denote the cyclotomic character by $\varepsilon$ (as a Galois representation) and we normalize the local class field theory so that $\varepsilon = z|z|$ (as a character of $\mbf Q_p^\times$) with the Hodge-Tate weight $-1$. 

\subsection*{Acknowledgements} We would like to thank Christophe Breuil, Jean-Francois Dat, David Hansen, Florian Herzig and Jim Humphreys for useful discussions and correspondence related to this work. In particular, the first author would like to thank Breuil for his patience regarding the results of Section \ref{sec:com-bad} not appearing until now. 

We would also like to thank MSRI and the organizers of the ``Hot topics: Perfectoid spaces and their applications" conference in February 2014 for hosting a wonderful event where some of this work was carried out. The second author would also like to thank Boston University for the hospitality in the week following the MSRI conference.

\section{Eigenvarieties}\label{sec:eigenvarieties}

The goal of this section is to recall the theory of definite eigenvarieties. We will recall two separate explicit constructions. The first, and most important for our purposes, is Emerton's construction via $p$-adically completed cohomology \cite{em1}. For references to the literature, we recall an earlier (and completely different) construction due to Chenevier \cite{che2}. The fact that these constructions give the same rigid analytic varieties is well-known \cite{loe}.

We begin first, however, with local preliminaries on two separate notions of refinements. In the case of Galois representations, we explicitly highlight the case of crystalline, ordinary representations. 

\subsection{Refinements}\label{subsec:galois-refine}

Throughout this section, we suppose that $\rho: G_{\mbf Q_p} \rightarrow \GL_3(L)$ is a crystalline representation whose crystalline eigenvalues are distinct and lie in $L^\times$. We further assume that $\rho$ is regular, i.e. the Hodge-Tate weights of $\rho$ are distinct and we write them $h_1 < h_2 < h_3$. 

\begin{defi}
A refinement $R$ of $\rho$ is an ordering $R = (\phi_1,\phi_2,\phi_3)$ of the crystalline eigenvalues appearing in $D_{\cris}(\rho)$.
\end{defi}
If $\{\phi_1,\dotsc,\phi_i\}$ is a list of distinct crystalline eigenvalues for $\rho$ then we denote by $\wt(\phi_1,\dotsc,\phi_i)$ the Hodge-Tate weight of the line $D_{\cris}(\wedge^i \rho)^{\varphi=\phi_1\dotsb \phi_i} \subset D_{\cris}(\wedge^i\rho)$. It must be a sum of distinct Hodge-Tate weights for $\rho$. Thus a refinement $R$ defines an ordering $(s_1,s_2,s_3)$ of the Hodge-Tate weights by declaring $\wt(\phi_1) = s_1$, $\wt(\phi_1,\phi_2) = s_1+s_2$ and $s_3$ is the unique weight not equal to $s_1$ or $s_2$.
\begin{defi}\label{defi:weight-type}
If $R$ is a refinement then its weight-type is the permutation $\tau \in S_3$ such that $s_i = h_{\tau(i)}$. We say $R$ is non-critical if $\tau = 1$. Otherwise, we say $R$ is critical of weight-type $\tau$.\end{defi}

For example if $\rho$ is a direct sum of three crystalline characters than there are six refinements, completely determined by their weight-types $\tau \in S_3$. Thus one is non-critical and the other five are critical.

We now specialize to the case that $\rho$ is upper-triangularizable (and still regular with distinct crystalline eigenvalues). Thus we assume that
\begin{equation}\label{eqn:matrix-rep}
\rho \sim \begin{pmatrix}
\psi_1 & \ast & \ast \\
0 & \psi_2 & \ast \\
0 & 0 & \psi_3 
\end{pmatrix}
\end{equation}
with each $\psi_i$ a crystalline character. Without loss of generality we may also assume that $\psi_i$ has Hodge-Tate weight $h_i$ and we refer to $\rho$ as ordinary. If we denote by $\phi_{\psi_i}$ the crystalline eigenvalue of $\psi_i$ then $v_p(\phi_{\psi_i}) = h_i$. Since $D_{\cris}(\rho)$ is weakly admissible we have that
\begin{align*}
\wt(\phi_{\psi_{i}}) &\leq v_p(\phi_{\psi_i}) = h_i & \text{(for $i=1,2,3$)}\\
\wt(\phi_{\psi_{i}},\phi_{\psi_j}) &\leq v_p(\phi_{\psi_i})  + v_p(\phi_{\psi_j}) = h_i + h_j & \text{(for each pair $(i,j)$)}
\end{align*}
The representation \eqref{eqn:matrix-rep} fixes one particular refinement $R_{\nc} := (\phi_{\psi_1},\phi_{\psi_2},\phi_{\psi_3})$ of $\rho$. Any other refinement $R$ must be of the form $R = R_{\nc}^{\sigma}:= (\phi_{\psi_{\sigma(i)}})_i$ for some $\sigma \in S_3$.

\begin{lemm}\label{lemm:indecomp-weights}
If $\rho$ is indecomposable then $\wt(\phi_{\psi_3}) < h_3$ and $\wt(\phi_{\psi_2},\phi_{\psi_3}) < h_2+h_3$.
\end{lemm}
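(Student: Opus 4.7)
The plan is to prove the contrapositive in both cases: assuming the equality $\wt(\phi_{\psi_3}) = h_3$ or $\wt(\phi_{\psi_2},\phi_{\psi_3}) = h_2+h_3$, I will construct a direct summand of $\rho$, which contradicts indecomposability. In each case the decomposition will split off $V_2$ (in the first case) or $V_1$ (in the second case), where $V_1 \subset V_2 \subset \rho$ are the sub-representations given by the upper-triangular filtration \eqref{eqn:matrix-rep}. The complements are produced from well-chosen $\varphi$-stable subspaces of $D_{\cris}(\rho)$ via the Colmez--Fontaine equivalence between crystalline representations of $G_{\mbf{Q}_p}$ and weakly admissible filtered $\varphi$-modules.

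For the first equality, I would consider $L := D_{\cris}(\rho)^{\varphi = \phi_{\psi_3}}$ with its induced Hodge filtration. By definition $\wt(\phi_{\psi_3})$ is the Hodge--Tate weight of $L$, so the hypothesis $\wt(\phi_{\psi_3}) = h_3$ together with $v_p(\phi_{\psi_3}) = h_3$ gives $t_H(L) = t_N(L)$; hence $L$ is weakly admissible as a filtered $\varphi$-module. Colmez--Fontaine then produces a crystalline sub-representation $\lambda \subset \rho$ with $D_{\cris}(\lambda) = L$. Since $\phi_{\psi_3}$ is distinct from the $\varphi$-eigenvalues $\phi_{\psi_1}, \phi_{\psi_2}$ appearing on $D_{\cris}(V_2)$, the subspaces $L$ and $D_{\cris}(V_2)$ intersect trivially inside $D_{\cris}(\rho)$; the full faithfulness of $D_{\cris}$ on crystalline representations yields $V_2 \cap \lambda = 0$, so $\rho = V_2 \oplus \lambda$ by dimension.

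For the second equality I would consider instead the $\varphi$-stable 2-dimensional subspace $N' := D_{\cris}(\rho)^{\varphi=\phi_{\psi_2}} \oplus D_{\cris}(\rho)^{\varphi=\phi_{\psi_3}}$ with its induced Hodge filtration. Its total Newton slope is $h_2+h_3$, and its total Hodge slope equals the Hodge--Tate weight of $\wedge^2 N'$. Since the crystalline eigenvalues of $\rho$ are distinct, $\wedge^2 N'$ coincides with the line $D_{\cris}(\wedge^2 \rho)^{\varphi = \phi_{\psi_2}\phi_{\psi_3}}$ used to define $\wt(\phi_{\psi_2},\phi_{\psi_3})$, so by hypothesis $t_H(N') = h_2+h_3 = t_N(N')$. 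Weak admissibility of $N'$ requires checking $t_H \leq t_N$ on its proper sub-$\varphi$-modules, which are just the two lines $D_{\cris}(\rho)^{\varphi = \phi_{\psi_i}}$ for $i = 2,3$; but on these lines the inequality is precisely the weak admissibility of $D_{\cris}(\rho)$ recorded just before the lemma. Colmez--Fontaine then gives a $2$-dimensional sub-representation $W \subset \rho$ with $D_{\cris}(W) = N'$, and $V_1 \cap W = 0$ follows from the same $\varphi$-eigenvalue disjointness argument, so $\rho = V_1 \oplus W$.

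The only step I expect to need real care is the verification of weak admissibility of $N'$ in the second case, because in general weak admissibility is not inherited by sub-objects of a weakly admissible filtered $\varphi$-module. Here this is painless because $N'$ has very few sub-$\varphi$-modules and each inequality I must check is literally one of the inequalities displayed above the lemma.
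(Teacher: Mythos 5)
Your proof is correct and takes essentially the same route as the paper's: assume equality, exhibit $D_{\cris}(\rho)$ as a direct sum of two weakly admissible sub-$\varphi$-modules (an eigenline or eigenplane complementing $D_{\cris}(V_2)$ resp.\ $D_{\cris}(\psi_1)$), and descend the splitting to $\rho$ to contradict indecomposability. The paper writes out only the first inequality and declares the second "identical"; your explicit check that the two-dimensional complement $N'$ is weakly admissible (its only proper sub-$\varphi$-modules being the two eigenlines, on which the required inequalities are the ones displayed before the lemma) is precisely the detail the paper elides, and you identify it correctly as the one step that is not literally identical to the rank-one case.
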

\begin{proof}
Each inequality is proved identically, so we only prove the first inequality. Suppose that $\wt(\phi_{\psi_3}) = h_3$. Then, the line $D_{\cris}(\rho)^{\varphi = \phi_{\psi_3}} \subset D_{\cris}(\rho)$ is weakly admissible and is a complement to the weakly admissible subspace $D_{\cris}(\rho)^{\varphi = \phi_{\psi_1}} \oplus D_{\cris}(\rho)^{\varphi = \phi_{\psi_2}}$. Thus $D_{\cris}(\rho)$ is a sum of two weakly admissible subspaces. This implies that $\rho$ is the direct sum of two subrepresentations, contradicting that $\rho$ be indecomposable.
\end{proof}

Returning to the representation as in \eqref{eqn:matrix-rep}, we say that $\rho$ is totally indecomposable (sometimes also called ``maximally non-split") if no conjugate of $\rho$ takes values in a proper Zariski closed subgroup of the upper triangular matrices. That is, if in every conjugate of $\rho$ as in \eqref{eqn:matrix-rep}, none of the extensions vanish.

\begin{lemm}\label{lemm:simple-trans-non-critical}
If $\rho$ is as in \eqref{eqn:matrix-rep} then $R_{\nc}$ is always non-critical. Furthermore, if $\rho$ is indecomposable then we have:
\begin{enumerate}
\item If $\rho$ is totally indecomposable then $R^{(12)}_{\nc}$ and $R^{(23)}_{\nc}$ are non-critical.
\item If $\Hom(\psi_2,\rho) \neq 0$ then $R^{\sigma}_{\nc}$ is critical if and only if $\sigma = (12)$ or $\sigma=(123)$. The refinements $R^{(12)}_{\nc}$ and $R^{(123)}_{\nc}$ are critical of weight-type $(12)$.
\item If $\Hom(\psi_3,\rho/\psi_1) \neq 0$ then $R^\sigma_{\nc}$ is critical if and only if $\sigma = (23)$ or $\sigma=(132)$. The refinements $R^{(23)}_{\nc}$ and $R^{(132)}_{\nc}$ are critical of weight-type $(23)$.
\end{enumerate}
\end{lemm}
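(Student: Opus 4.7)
The plan is to reduce the claims to an explicit computation inside the filtered $\varphi$-module $D := D_{\cris}(\rho)$, in coordinates adapted to both the $\varphi$-eigenstructure and the subrepresentation filtration $W_1 \subset W_2 \subset \rho$ coming from \eqref{eqn:matrix-rep} (so $W_1 = \psi_1$ and $W_2/W_1 = \psi_2$). First I would choose a $\varphi$-eigenbasis $e_1, e_2, e_3$ of $D$ with $\varphi e_i = \phi_{\psi_i} e_i$, such that $e_1$ spans $D_{\cris}(W_1)$ and $e_1, e_2$ span $D_{\cris}(W_2)$; this is possible because the $\phi_{\psi_i}$ are pairwise distinct. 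The Hodge filtration on $D$ is then described by three scalars $a, b, c \in L$ via
\begin{equation*}
\Fil^{h_3} D = \langle e_3 + a e_1 + b e_2\rangle, \qquad \Fil^{h_2} D \cap D_{\cris}(W_2) = \langle e_2 + c e_1\rangle,
\end{equation*}
where the $e_3$-coefficient can be normalized to $1$ because $W_2$ has no Hodge weight equal to $h_3$.

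The conceptual core of the argument is the dictionary between $(a,b,c)$ and the three cases. Direct inspection gives: $c = 0$ if and only if $W_2 = \psi_1 \oplus \psi_2$, i.e.\ case~(2) holds; $b = 0$ if and only if the $\varphi$-stable subspace $D'_{13} := \langle e_1, e_3\rangle$ is weakly admissible with $t_H = h_1 + h_3$, which is equivalent to $\rho$ admitting a $2$-dimensional subrepresentation whose quotient is $\psi_2$, i.e.\ case~(3) holds. In both cases~(2) and~(3) the scalar $a$ is forced to be nonzero: in case~(2) because $a = c = 0$ would make $D'_{23} := \langle e_2, e_3\rangle$ weakly admissible with $t_H = h_2 + h_3$, producing a $2$-dimensional subrepresentation whose quotient is $\psi_1$ and thereby forcing $\rho$ to decompose (since $\psi_1$ is a composition factor of multiplicity one); in case~(3) because $a = b = 0$ would force $\Fil^{h_3} D = \langle e_3\rangle$, contradicting Lemma~\ref{lemm:indecomp-weights}. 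This observation incidentally confirms that cases~(2) and~(3) are mutually exclusive.

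With the dictionary installed, the Hodge numbers entering the lemma are read off directly from $(a, b, c)$: one has $\wt(\phi_{\psi_1}) = h_1$ and $\wt(\phi_{\psi_1}, \phi_{\psi_2}) = h_1 + h_2$ from the inclusion $W_1 \subset W_2$; $\wt(\phi_{\psi_2}) = h_2$ if $c = 0$ and $h_1$ otherwise; $\wt(\phi_{\psi_1}, \phi_{\psi_3}) = h_1 + h_3$ if $b = 0$ and $h_1 + h_2$ otherwise; and in cases~(2) and~(3) one has $\wt(\phi_{\psi_2}, \phi_{\psi_3}) = h_1 + h_2$, because $a \neq 0$ forces $\Fil^{h_3} D \not\subset D'_{23}$ and hence $h_3$ cannot appear among the Hodge weights of $D'_{23}$. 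For each $\sigma \in S_3$ the pair $s_1 = \wt(\phi_{\psi_{\sigma(1)}})$ and $s_1 + s_2 = \wt(\phi_{\psi_{\sigma(1)}}, \phi_{\psi_{\sigma(2)}})$ is then determined, and the weight-type $\tau$ is read off from $s_i = h_{\tau(i)}$. Routine tabulation then yields the three parts: in the totally indecomposable case, $b, c \neq 0$ force $R^{(12)}_{\nc}$ and $R^{(23)}_{\nc}$ to have weight-type $1$; in case~(2) the switch $\wt(\phi_{\psi_2}) = h_2$ makes $R^{(12)}_{\nc}$ and $R^{(123)}_{\nc}$ critical of weight-type $(12)$; and in case~(3) the switch $\wt(\phi_{\psi_1}, \phi_{\psi_3}) = h_1 + h_3$ makes $R^{(23)}_{\nc}$ and $R^{(132)}_{\nc}$ critical of weight-type $(23)$. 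The bookkeeping across six permutations and three cases is the main non-conceptual obstacle; the key insight that does all the work is that in cases~(2) and~(3) indecomposability (together with Lemma~\ref{lemm:indecomp-weights}) pins down $a \neq 0$.
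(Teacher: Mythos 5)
Your proof is correct and is essentially the paper's argument recast in explicit coordinates: the scalars $c$, $b$, $a$ encode exactly the three facts the paper establishes, namely that $D_{\cris}(\rho)^{\varphi=\phi_{\psi_2}}$ is weakly admissible iff $\Hom(\psi_2,\rho)\neq 0$, that $\langle e_1,e_3\rangle$ is weakly admissible iff $\Hom(\psi_3,\rho/\psi_1)\neq 0$, and that $\wt(\phi_{\psi_3})=h_1$ and $\wt(\phi_{\psi_2},\phi_{\psi_3})=h_1+h_2$ in the indecomposable cases (the paper gets the last point via Lemma \ref{lemm:indecomp-weights}, just as you do). One small point to tighten: the mutual exclusivity of cases (2) and (3) is actually used in your tabulation (you need $b\neq 0$ in case (2) to get $\wt(\phi_{\psi_1},\phi_{\psi_3})=h_1+h_2$, and $c\neq 0$ in case (3) to get $\wt(\phi_{\psi_2})=h_1$), and it does not follow from the observation that $a\neq 0$; rather, $b=c=0$ would make $\langle e_2\rangle$ and $\langle e_1,e_3\rangle$ complementary weakly admissible subspaces and hence decompose $\rho$ — which is precisely the complement argument the paper uses at the corresponding step.
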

\begin{proof}
Since $\wt(\phi_{\psi_1}) \leq h_1$ and $h_1$ is the least Hodge-Tate weight we have $\wt(\phi_{\psi_1}) = h_1$. Similarly, $\wt(\phi_{\psi_1},\phi_{\psi_2}) = h_1+h_2$. Thus $R_{\nc}$ is non-critical. 

Note now that $\Hom(\psi_2,\rho) \neq 0$ if and only if $D_{\cris}(\rho)^{\varphi=\phi_{\psi_2}}$ is a weakly-admissible subspace, if and only if $\wt(\phi_{\psi_2}) = h_2$. Thus if $\rho$ is totally indecomposable then $R_{\nc}^{(12)}$ is non-critical. A similar argument shows that $R_{\nc}^{(23)}$ is non-critical as well if $\rho$ is totally indecomposable.

Suppose now that $\Hom(\psi_2,\rho) \neq 0$ but $\rho$ is indecomposable. We will prove (2). By the previous paragraph, if $\sigma(1) = 2$ then $R^\sigma_{\nc}$ is critical. Furthermore, each such $R_{\nc}^{\sigma}$ is critical of weight-type $(12)$, by Lemma \ref{lemm:indecomp-weights}  if $\sigma=(123)$.  On the other hand, since $\rho$ is indecomposable, the complement $D_{\cris}(\rho)^{\varphi=\phi_{\psi_1}}\oplus D_{\cris}(\rho)^{\varphi=\phi_{\psi_3}}$ to $D_{\cris}(\rho)^{\varphi=\phi_{\psi_2}}$ is not weakly admissible, and thus $\wt(\phi_{\psi_1},\phi_{\psi_3}) = h_1+h_2$. Finally it is easy to see that $\wt(\phi_{\psi_3}) = h_1$. Indeed, if not then we would see that $\Fil^{h_2}D_{\cris}(\rho)$ is a $\varphi$-stable with crystalline eigenvalues $\phi_{\psi_2}$ and $\phi_{\psi_3}$. Since this would contradict Lemma \ref{lemm:indecomp-weights}, we have our claim. It follows from these observations that $R^\sigma_{\nc}$ is non-critical if $\sigma \in \{(132), (23), (13)\}$. This completes the proof of (2). The proof of (3) is done the same way and we leave it to the reader.
\end{proof}

\begin{rema}
If $\rho$ is totally indecomposable then it is possible that a $R^\sigma_{\nc}$ is critical if $\sigma \notin \{1,(12),(23)\}$.
\end{rema}

Let us introduce the following definition.  
\begin{defi}
We say that $\rho$ is generic ordinary if $\rho$ is ordinary and  $p\phi_{\psi_i} \neq \phi_{\psi_{i+1}}$ for $i=1,2$.
\end{defi}
We suppose that $\rho$ is ordinary as in \eqref{eqn:matrix-rep}. Using local class field theory we write each character $\psi_i$ as a character of $\mbf Q_p^\times$
\begin{equation*}
\psi_i = z^{-h_i}\nr(\phi_{\psi_i}).
\end{equation*}
It follows from the definition and this expression that $\rho$ is generic ordinary if and only if $\psi_i\psi_j \notin \{1,\varepsilon^{\pm 1}\}$ for each $i\neq j$. Thus $\rho$ is generic ordinary if and only if $\rho$ is generic ordinary in the sense of \cite[Section 3.3]{bh}.

We now consider the automorphic side. The analog of a crystalline $G_{\mbf Q_p}$-representation is an unramified principal representations of $\GL_3(\mbf Q_p)$. Denote by $B(\mbf Q_p)$ the upper triangular Borel in $\GL_3(\mbf Q_p)$ and $T(\mbf Q_p)$ the diagonal torus inside $B(\mbf Q_p)$. If $\theta$ is a smooth character of $T(\mbf Q_p)$ then we can form the smooth non-normalized induction $\Ind_{B(\mbf Q_p)}^{\GL_3(\mbf Q_p)}(\theta)^{\sm}$.  Each representation $\Ind_{B(\mbf Q_p)}^{\GL_3(\mbf Q_p)}(\theta)^{\sm}$ contains a unique admissible irreducible unramified constituent. Given an irreducible admissible representation $\pi$ of $\GL_3(\mbf Q_p)$, if $\pi$ is unramified then $\pi$ is such a Jordan-Holder factor. The character $\theta$ is well-defined up to the action of $S_3$ given by
\begin{equation}\label{eqn:auto-regularity}
\theta^{(\sigma)} = \theta^{\sigma}(\delta_{B(\mbf Q_p)}^{-1/2})^{\sigma}\delta_{B(\mbf Q_p)}^{1/2}.
\end{equation}
Here the twists on the right hand side are the usual way of permuting coordinates, for example
\begin{equation*}
(\theta_1\otimes \theta_2\otimes\theta_3)^\sigma := \theta_{\sigma(1)} \otimes \theta_{\sigma(2)}\otimes \theta_{\sigma(3)}.
\end{equation*}

\begin{defi}
Let $\pi$ be an unramified smooth admissible representation of $\GL_3(\mbf Q_p)$. A refinement of $\pi$ is the choice of a smooth character $\theta$ such that $\pi \subset \Ind_{B(\mbf Q_p)}^{\GL_3(\mbf Q_p)}(\theta)^{\sm}$.
\end{defi}

In the terminology of \cite{bc} a refinement is the choice of $\theta$ such that $\pi$ appears as a Jordan-Holder factor of $\Ind_{B(\mbf Q_p)}^{\GL_3(\mbf Q_p)}(\theta)^{\sm}$ and our refinement is their accessible refinement. In this language, every $\sigma \in S_3$ defines a refinement $\theta^{(\sigma)}$ but only some $\sigma$ define an accessible refinement. To that point, however, we have an equivalence
\begin{equation}\label{eqn:unramified-criterion}
\Ind_{B(\mbf Q_p)}^{\GL_3(\mbf Q_p)}(\theta)^{\sm} \text{ is unramified} \iff p^{j-i}{\theta_i(p) \over \theta_j(p)} \neq p^{\pm 1} \text{ for $i\neq j$}.
\end{equation}
If that is the case (which it will be in our applications) then $\pi = \Ind_{B(\mbf Q_p)}^{\GL_3(\mbf Q_p)}(\theta)^{\sm}$, every refinement is accessible and thus $\pi \subset \Ind_{B(\mbf Q_p)}^{\GL_3(\mbf Q_p)}(\theta^{(\sigma)})^{\sm}$ for all $\sigma \in S_3$.

\subsection{Definite eigenvarieties}\label{subsec:def-eigenvarieties}
We fix a totally real field extension $F^+$ of the rational numbers $\mbf Q$ and $F/F^+$ a CM extension.  We assume that $p$ is totally split in $F$ and we let $\Sigma_p$ be the set of places $v \mid p$ in $F^+$. For each $v \in \Sigma_p$ we fix the choice of a place $\tilde v$ above $v$.

Let $G = \mathrm{U}(3,F/F^+)$ be a definite unitary group over $F^+$ in three variables attached to $F/F^+$ which is unramified at places of $F^+$ away from a finite set $S$. We assume that $S$ does not contain any places from $\Sigma _p$. If $w$ is a place of $F^+$ split in $F$ and not belonging to $S$ then each choice $\tilde w$ of place over $w$ defines an isomorphism
\begin{equation*}
G(F_w^+) \overset{\tilde w}{\simeq} \GL_3(F_{\tilde w}).
\end{equation*}
In particular, for each $v \in \Sigma_p$ we have a fixed isomorphism $G(F_v^+) \overset{\tilde v}{\simeq} \GL_3(\mbf Q_p)$. Denote now 
\begin{equation*}
G_{\Sigma_p} = G(F^+\otimes_{\mbf Q} \mbf Q_p) \simeq \prod_{v\in\Sigma_p} \GL_3(\mbf Q_p).
\end{equation*} 
Under these identifications we define $T_{\Sigma_p} = \prod_{v \in \Sigma_p} T(\mbf Q_p)$ to be the diagonal torus, $B_{\Sigma_p}$ the upper triangular Borel and $B^{-}_{\Sigma_p}$ the lower triangular Borel. We denote as well $N^0_{\Sigma_p}=N(\mc O_{F^+}\otimes_{\mbf Z} \mbf Z_p)$. Finally, we let 
\begin{equation}\label{eqn:monoid}
T^+ _{\Sigma _p} = \{ t\in T_{\Sigma_p} \mid tN^0 _{\Sigma _p} t^{-1} \subset N^0 _{\Sigma _p} \}.
\end{equation}

Fix a compact open subgroup $K^p \subset G(\mbf A_{F^+}^{p\infty})$. We factor $K^p$ into a product $K^p = \prod_{v \notin\Sigma_p} K^p_v$. Choose a finite set of places $\Sigma^p$ disjoint from $\Sigma_p$ of $F^+$ such that if $w \notin \Sigma := \Sigma^p\cup\Sigma_p$ then $K_w^p$ is hyperspecial maximal compact in $G(F^+_w)$. We assume that $S \subset \Sigma ^p$. We write the above factorization as 
\begin{equation*}
K^p = \prod_{w \notin \Sigma^p} K^p_w \times \prod_{w \in \Sigma^p} K_w^p =: K^{p\Sigma^p}K_{\Sigma^p}^p
\end{equation*}
and define the unramified Hecke algebra 
\begin{equation*}
\mc H(K^p)^{\nr} := \mc H(G(\mbf A_{F^+}^{p\Sigma^p})//K^{p\Sigma^p}).
\end{equation*}
The places $\tilde v$ above $v \in \Sigma_p$ define isomorphisms $G(F_v^+)\simeq \GL_n(F_{\tilde v}) = \GL_3(\mbf Q_p)$. 

We now define $\mc T_v := \Hom_{C^0}(T_v,\mbf G_m^{\rig})$ and $\mc T_{\Sigma_p} = \prod_{v\in \Sigma_p} \mc T_v = \Hom_{C^0}(T_{\Sigma_p},\mbf G_m^{\rig})$. Any element of $\mc T_{\Sigma_p}$ is necessarily locally analytic. We denote by $\mc T_{\Sigma_p}^{\sm}$ the locally constant characters and 
\begin{equation}\label{eqn:alg-wts}
\mc T^{\alg}_{\Sigma_p} = \{\left((z_{1,v},z_{2,v},z_{3,v}) \mapsto z_{1,v}^{s_{1,v}}z_{2,v}^{s_{2,v}}z_{3,v}^{s_{3,v}}\right)_v : s_{i,v} \in \mbf Z \text{ for $i=1,2,3$ and $v \in \Sigma_p$}\}
\end{equation}
the algebraic characters. The product map $\mc T_{\Sigma_p}^{\sm} \times \mc T_{\Sigma_p}^{\alg}\rightarrow \mc T_{\Sigma_p}$ surjects onto the locally algebraic characters.

Finally, if $k \in \mbf Z^3_{\Sigma_p}$ such that $k_{1,v} \geq k_{2,v} \geq k_{3,v}$ for all $v \in \Sigma_p$ then we define the dominant weight
\begin{equation*}
\delta_k = (z_{1,v}^{k_{1,v}} \otimes z_{2,v}^{k_{2,v}} \otimes z_{3,v}^{k_{3,v}})_{v \in \Sigma_p} \in \mc T_{\Sigma_p}^{\alg}
\end{equation*}
and we let $\mc T_{\Sigma_p}^{\alg,\geq 0} \subset \mc T_{\Sigma_p}^{\alg}$ be the submonoid of all such characters.

Suppose that $\delta_k \in \mc T_{\Sigma_p}$ is a dominant weight for $G_{\Sigma_p}$ and let $W_k$ be the irreducible algebraic  representation of $G_{\Sigma_p}$ with highest weight $k$. Since $G$ is definite, $G(F^+)$ is compact and the space $\mc A_k(G,K^p)$ of automorphic forms of weight $k$ and tame level $K^p$ decomposes as a $\mc H(K^p)^{\nr}$-module
\begin{equation}\label{eqn:auto-forms}
\mc A_k(G,K^p) \simeq \bigoplus_{\pi_\infty \simeq W_k} (\pi_f^{K^{p\Sigma_p}})^{m(\pi)}
\end{equation}
with $\pi$ running over irreducible automorphic representations for $G(\mbf A_{F^+})$ and $m(\pi)$ the multiplicity of $\pi$ appearing in $L^2(G(F^+)\backslash G(\mbf A_{F^+}))$. If $\pi$ is an irreducible automorphic representation for $G(\mbf A_{F^+})$ of tame level $K^p$ we denote by $\lambda_{\pi}: \mc H(K^p)^{\nr} \rightarrow \overline{\mbf{Q}}_p$ the canonical character. 

If $v \in \Sigma_p$ we denote by $\pi_v$ the local component of $\pi$, a smooth, admissible representation of $G(F_v^+) \simeq \GL_3(\mbf Q_p)$. Write 
\begin{equation*}
\pi_{\Sigma_p} = \bigotimes_{v \in \Sigma_p} \pi_v.
\end{equation*}
We say that $\pi$ is unramified at $p$ if $\pi_{\Sigma_p}$ is unramified, or equivalently, each $\pi_v$ is unramified. If $\pi_{\Sigma_p}$ is unramified then a refinement $\theta$ is a $\Sigma_p$-tuple $\theta = (\theta_v)_{v \in \Sigma_p}$ where  $\theta_v$ is a refinement of $\pi_v$. Equivalently, it is a locally constant character $\theta \in \mc T_{\Sigma_p}^{\sm}$ such that $\pi_{\Sigma_p} \subset \Ind_{B_{\Sigma_p}}^{G_{\Sigma_p}}( \theta)^{\sm}$.

Using our isomorphism $\overline{\mbf Q}_p \simeq \mbf C$ we identify a place $v \in \Sigma_p$ with an infinite place $v_\infty \in \Sigma_\infty$. Thus for each $v \in \Sigma_p$ we have the infinite component $\pi_{v_\infty}$ of $\pi$. It is an irreducible algebraic representation of the compact group $G(F_{\infty_v}^+) \simeq \mathrm{U}(3,\mbf R)$ and thus has an associated dominant weight $k_v = (k_{1,v} \geq k_{2,v} \geq k_{3,v}) \in \mbf Z^3$. Thus, for a given $\pi$ unramified at $p$, the choice of a refinement $\theta$ defines a locally algebraic character $\chi:= \theta \delta_{k} \in \mc T_{\Sigma_p}$.

We now describe the eigenvariety $X = X_{K^p}$ of tame level $K^p$. Before we begin, we note that our description uniquely defines the space $X$ by \cite[Proposition 7.2.8]{bc} and is independent of the two constructions given in Section \ref{subsec:explicit-construct}.

The eigenvariety is a reduced rigid analytic space, equidimensional of dimension 3$\left|\Sigma_p\right|$, equipped with the following auxiliary structures
\begin{enumerate}\label{aux-structure}
\item a finite map $\chi: X \rightarrow \mc T_{\Sigma_p}$,
\item a character $\lambda: \mc H(K^p)^{\nr} \rightarrow \Gamma(X,\mc O_X^{\rig})$, and
\item a Zariski dense set of points $X_{\cl} \subset X(\overline{\mbf Q}_p)$.
\end{enumerate}
These structures interact in the following way. If $x \in X(\overline{\mbf Q}_p)$ we denote by $\chi_x = \chi(x) \in \mc T_{\Sigma_p}(\overline{\mbf Q}_p)$ and by $\lambda_x$ the induced character
\begin{equation*}
\mc H(K^p)^{\nr} \overset{\lambda}{\longrightarrow} \Gamma(X,\mc O_X^{\rig}) \overset{\operatorname{eval}_x}{\longrightarrow} \overline{\mbf Q}_p.
\end{equation*}
Then the natural map
\begin{align*}
X(\overline{\mbf Q}_p) &\rightarrow \mc T_{\Sigma_p}(\overline{\mbf Q}_p) \times \Hom(\mc H(K^p)^{\nr}, \overline{\mbf Q}_p)\\
x &\mapsto (\chi_x,\lambda_x).
\end{align*}
is injective and defines a bijection between $X_{\cl}$ and pairs $(\chi,\lambda_\pi) = (\theta\delta_k,\lambda_\pi)$ attached to classical automorphic representations $\pi$ for $G(\mbf A_{F^+})$ which are unramified at $p$ and the choice of a refinement $\theta$ described above. We will almost always refer to points $x \in X(\overline{\mbf Q}_p)$ by giving their uniquely determined pair $(\chi_x,\lambda_x)$.

If $x \in X$ then $\chi_x$ can have a weight part split off. If $v \in \Sigma_p$ we denote $\mc W_v = \Hom(T(\mc O_{F_v^+}), \mbf G_m^{\rig})$ and $\mc W_{\Sigma_p} = \prod_{v \in \Sigma_p} \mc W_v$.  There is a natural projection $\mc T_{\Sigma_p} \rightarrow \mc W_{\Sigma_p}$ and we let the weight map $\kappa: X \rightarrow \mc W_{\Sigma_p}$ be the composition
\begin{equation*}
X \overset{\chi}{\longrightarrow} \mc T_{\Sigma_p} \overset{\operatorname{proj}}{\longrightarrow} \mc W_{\Sigma_p}.
\end{equation*}
We make the same definition for $\mc W_{\Sigma_p}^{\alg}$ as in \eqref{eqn:alg-wts}.  The projection map induces a natural isomorphism $\mc T_{\Sigma_p}^{\alg} \simeq \mc W_{\Sigma_p}^{\alg}$. If $z = (\chi,\lambda)$ is a classical point associated to an automorphic representation $\pi$ of weight $k = (k_v)$ then $\chi = \theta\delta_k$ with $\theta$ smooth, so that $\kappa(z) = \delta_k$. 

\subsection{Explicit constructions}\label{subsec:explicit-construct}

The succinct description of eigenvarieties we have given is sufficient for some purposes. But in order to observe the phenomena predicted by Breuil and Herzig \cite{bh} via $p$-adic families, we need to recall now an explicit construction due to Emerton. We will also quickly describe another approach and give their relationship. 

We let $L/\mbf Q_p$ be a finite extension with ring of integers $\mc O_L$ and uniformizer $\varpi_L$. We preserve the notations and choices of the previous section. The $p$-adically completed cohomology of tame level $K^p$ and with coefficients in $L$ is by definition
\begin{equation*}
 \widehat{H}^0(K^p)_L = \left( \varprojlim _n \varinjlim _{K_p} H^0(G(F^+) \backslash G(\mbf{A}^{\infty}_{F^+})/K_pK^p, \mbf{Z}_p/p^n \mbf{Z}_p)\right) \otimes _{\mbf{Z}_p} L
\end{equation*}
where $K_p$ runs over compact open subgroups of $G(\mbf{Q}_p)$. This is an $L$-Banach space equipped with a continuous representation of $G_{\Sigma_p}\times \mc H(K^p)^{\nr}$. If $\lambda: \mc H(K^p)^{\nr} \rightarrow \overline{\mbf Q}_p$ is a character we denote by $\widehat H^0(K_p)_L^{\lambda}$ the corresponding $G_{\Sigma_p}$-representation on the $\mc H(K^p)^{\nr}$-eigenspace of the character $\lambda$.

Within the space $\widehat H^0(K^p)_L$ we can take the locally analytic vectors $\widehat H^0(K^p)_{L,\an}$ and then this supplies us with a locally analytic $G_{\Sigma_p}$-representation to which we can apply Emerton's Jacquet functor \cite{em2,em3}. The output is a locally analytic representation $J_{B_{\Sigma_p}}(\widehat H^0(K^p)_{L,\an})$ of $T_{\Sigma_p}$. If $\chi \in \mc T_{\Sigma_p}$ then we will use the notation $J_{B_{\Sigma_p}}^\chi(\widehat H^0(K^p)_{L,\an})$ to denote the corresponding $\chi$-eigenspace.

The space of $N^0_{\Sigma_p}$-invariants in the completed cohomology has a Hecke action of the monoid $T^+_{\Sigma_p}$ (as in \eqref{eqn:monoid}) \cite[Section 3.4]{em2}. Explicitly, if $t \in T_{\Sigma_p}^+$ and $v\in \widehat H^0(K^p)^{N^0_{\Sigma_p}}$ then
\begin{equation}\label{eqn:hecke-operator-pit}
\pi_tv ={1 \over (N^0_{\Sigma_p} : tN^0_{\Sigma_p}t^{-1})}  \sum_{n \in N^0_{\Sigma_p}/tN^0_{\Sigma_p}t^{-1}} ntv.
\end{equation}
One checks that this is well-defined and preserves locally analytic vectors. This action relates to the Jacquet functor via a Hecke-equivariant isomorphism \cite[Proposition 3.4.9]{em2}
\begin{equation*}
J^\chi_{B_{\Sigma_p}}(\widehat H^0(K^p)_{L,\an}) \simeq \widehat H^0(K^p)_{L,\an}^{N^0_{\Sigma_p},T^+_{\Sigma_p}=\chi}.
\end{equation*}
Emerton then constructed an eigenvariety \cite{em1} $X_E$ whose points are described as 
\begin{equation*}
X_{E}(L) =\big\{ (\chi,\lambda) \in \mc T_{\Sigma_p}(L) \times \Hom(\mc H(K^p)^{\nr}, L) :  J_{B_{\Sigma_p}}^{\chi}(\widehat H^0(K^p)_{L,\an}^{\lambda}) \neq 0\big\}.
\end{equation*}

An earlier construction of the eigenvariety relies on the use of locally analytic principal series of Iwahori level. Choose $\chi \in \mc T_{\Sigma_p}$, which we extend by inflation to a character of $B_{\Sigma_p}$. Consider the Iwahori subgroup $I_{\Sigma_p} \subset G(\mc O_{F^+}\otimes_{\mbf Z_p} \mbf Z_p)$ whose elements are upper triangular modulo $p$. The intersection $I_{\Sigma_p} \cap B^-_{\Sigma_p} =: \overline{N}_{\Sigma_p}^0$ has the structure of a rigid analytic space and we define the following space of locally analytic functions
\begin{equation*}
\mc C_\chi = \left\{\text{$f: I_{\Sigma_p}B_{\Sigma_p} \rightarrow \mbf Q_p$ :\parbox{9cm}{\centering $ f|_{\overline{N}_{\Sigma _{p}}^0 }$ is locally analytic and $f(gb) = \chi(b)f(g)$ if $g \in I_{\Sigma_p}B_{\Sigma_p}$ and $b \in B_{\Sigma_p}$}}\right\}.
\end{equation*}
Notice that the monoid $T^+_{\Sigma_p}$ preserves $\overline{N}_{\Sigma_p}^0$ under conjugation $t^{-1}\overline{N}_{\Sigma_p}^{0}t \subset \overline{N}_{\Sigma_p}^{0}$ and thus the monoid $M_{\Sigma_p} = I_{\Sigma_p}T^+_{\Sigma p}I_{\Sigma_p} \subset G_{\Sigma_p}$ acts on $\mc C_{\chi}$ by the formula $(mf)(x) = f(m^{-1}x)$.

We define then the space of $p$-adic modular forms of tame level $K^p$ and weight $\chi$ to be
\begin{equation*}
F(\mc C_\chi, K^p) := \left\{ f: G(F^+) \backslash G(\mbf A_f)/K^p \rightarrow \mc C_\chi : \text{\parbox{7cm}{\centering $f(gx) = x^{-1}f(g)$ for all $g \in G(\mbf A_f)$ and $x \in I_{\Sigma_p}$}}\right\}.
\end{equation*}
This group has an action of the monoid $M_{\Sigma_p}$ and thus also the action of the Atkin-Lehner algebra $\mc A_{\Sigma_p}$, i.e. the commutative subalgebra of $\mc H(G_{\Sigma_p}\slash \slash I_{\Sigma_p})$ generated by $M_{\Sigma_p}$. Chenevier \cite{che2} then also defines an eigenvariety $X_C$ whose points are given by
\begin{equation*}
X_C(L) = \big\{(\chi,\lambda) \in \mc T_{\Sigma_p}(L) \times \Hom(\mc H(K^p)^{\nr},L) : F(C_\chi,K^p)^{\lambda,\mc A_p} \neq 0\big\}.
\end{equation*}

\begin{prop}\label{prop:comparision}
There is a canonical isomorphism of rigid varieties $X_C \simeq X_E$ commuting with the weight map $\chi$, the Hecke eigensystem $\lambda$ and preserving the classical points $X_{\cl}$. More specifically, for a pair $(\chi,\lambda)$ we have an isomorphism of vector spaces
\begin{equation*}
F(C_{\chi},K^p)^{\lambda,\mc A_p} \simeq J_{B_{\Sigma_p}}^{\chi}(\widehat H^0(K^p))^{\lambda}_{L,\an}.
\end{equation*}
\end{prop}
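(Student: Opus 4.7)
The plan is to invoke the universal property of eigenvarieties to reduce the global statement to a pointwise one. More precisely, the eigenvariety $X_{K^p}$ attached to the datum $(\mc T_{\Sigma_p}, \mc H(K^p)^{\nr}, X_{\cl})$ is uniquely determined by \cite[Proposition 7.2.8]{bc} once we specify, for each pair $(\chi,\lambda) \in \mc T_{\Sigma_p}(L) \times \Hom(\mc H(K^p)^{\nr},L)$, a finite-dimensional vector space of ``finite slope'' eigenforms. Thus it suffices to exhibit a canonical, Hecke-equivariant isomorphism
\begin{equation*}
F(C_{\chi},K^p)^{\lambda,\mc A_{\Sigma_p}} \simeq J_{B_{\Sigma_p}}^{\chi}(\widehat H^0(K^p)_{L,\an})^{\lambda}
\end{equation*}
respecting the classical points, and this will force $X_C \simeq X_E$ as rigid spaces over $\mc T_{\Sigma_p}$.

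For the pointwise isomorphism, the key input is Emerton's identification
\begin{equation*}
J^\chi_{B_{\Sigma_p}}(\widehat H^0(K^p)_{L,\an}) \simeq \widehat H^0(K^p)_{L,\an}^{N^0_{\Sigma_p},\, T^+_{\Sigma_p}=\chi}
\end{equation*}
from \cite[Proposition 3.4.9]{em2}. So the task reduces to identifying the right-hand side with $F(C_\chi,K^p)^{\lambda,\mc A_{\Sigma_p}}$. First I would unravel the definition of $\widehat H^0(K^p)_L$: it is the $p$-adic completion of classical automorphic functions on $G(F^+)\backslash G(\mbf A_{F^+}^\infty)/K^p$, so an Iwahori-level locally analytic vector is a function with values in some space of locally analytic functions on an open neighborhood of $I_{\Sigma_p}$ in $G_{\Sigma_p}$. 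Using the Iwahori decomposition $I_{\Sigma_p} = \overline N_{\Sigma_p}^0 \cdot T_{\Sigma_p}^0 \cdot N^0_{\Sigma_p}$, the $N^0_{\Sigma_p}$-invariance combined with $B_{\Sigma_p}$-equivariance via $\chi$ forces such a function to be determined by its restriction to $\overline N^0_{\Sigma_p}$, where it must be honestly locally analytic; this is precisely an element of $\mc C_\chi$.

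Next I would verify that the $T^+_{\Sigma_p}$-action matches the Atkin-Lehner action under this dictionary. Both operators are defined by the identical formula
\begin{equation*}
\pi_t v \;=\; \frac{1}{[N^0_{\Sigma_p} : tN^0_{\Sigma_p}t^{-1}]} \sum_{n \in N^0_{\Sigma_p}/tN^0_{\Sigma_p}t^{-1}} nt\cdot v,
\end{equation*}
and the monoid $M_{\Sigma_p} = I_{\Sigma_p}T^+_{\Sigma_p}I_{\Sigma_p}$ acts on both spaces compatibly. Thus the $T^+_{\Sigma_p}=\chi$ condition on the Emerton side matches exactly the $\mc A_{\Sigma_p}$-eigenvalue condition built into $F(C_\chi,K^p)^{\lambda,\mc A_{\Sigma_p}}$, and the outer $\mc H(K^p)^{\nr}$-action agrees since on both sides it comes by functoriality from the tame Hecke action on automorphic forms. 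Finally, by classicality criteria (small slope vectors come from classical automorphic forms in both constructions), the classical point sets match.

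The main obstacle is the careful matching of locally analytic structures: on the completed cohomology side analyticity is imposed with respect to the full $G_{\Sigma_p}$-action, whereas on Chenevier's side only analyticity on $\overline N_{\Sigma_p}^0$ is required. The point, which requires some bookkeeping with the Iwahori decomposition and a descent argument, is that any $N^0_{\Sigma_p}$-invariant Iwahori-continuous vector in $\widehat H^0(K^p)_L$ transforming as $\chi$ under $B_{\Sigma_p}$ is automatically locally analytic for $G_{\Sigma_p}$ if and only if its restriction to $\overline N^0_{\Sigma_p}$ is locally analytic. Once this equivalence is in hand, the two sides are literally the same vector space, and the comparison $X_C \simeq X_E$ follows formally from the universal property.
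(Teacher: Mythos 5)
Your overall architecture matches the paper's: both reduce the comparison of rigid spaces to a pointwise identification of eigenspaces and then invoke the uniqueness/comparison theorem \cite[Proposition 7.2.8]{bc}. The difference is that the paper simply cites \cite[Proposition 3.10.3]{loe} for the vector-space isomorphism $F(C_\chi,K^p)^{\lambda,\mc A_p} \simeq J_{B_{\Sigma_p}}^{\chi}(\widehat H^0(K^p))^{\lambda}_{L,\an}$, whereas you attempt to prove that identification directly. That is a legitimate thing to try, but your sketch has a gap at precisely the point where the real content of Loeffler's proposition lies.

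Concretely: you assert that ``$N^0_{\Sigma_p}$-invariance combined with $B_{\Sigma_p}$-equivariance via $\chi$'' forces a vector to be determined by its restriction to $\overline N^0_{\Sigma_p}$. But the elements of $\widehat H^0(K^p)_{L,\an}^{N^0_{\Sigma_p},\,T^+_{\Sigma_p}=\chi}$ carry no literal $B_{\Sigma_p}$-transformation law: the condition $T^+_{\Sigma_p}=\chi$ is an eigencondition for the \emph{averaged} double-coset operators $\pi_t$ of \eqref{eqn:hecke-operator-pit}, not for naive right translation (only the $T^0_{\Sigma_p}$-part of it is a genuine transformation property). The passage from a germ of data near the identity coset of $\overline N^0_{\Sigma_p}$ to a full element of $\mc C_\chi$ on the big cell is exactly what the finite-slope eigencondition for the strictly contracting elements of $T^+_{\Sigma_p}$ (equivalently, Emerton's canonical lifting underlying \cite[Proposition 3.4.9]{em2}) provides; without making that mechanism explicit, the bijectivity of your restriction map is unjustified, and the ``bookkeeping with the Iwahori decomposition'' you defer is not where the difficulty actually sits. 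The natural comparison map goes the other way (evaluation of an $\mc C_\chi$-valued form at $1 \in \overline N^0_{\Sigma_p}$), and showing it is an isomorphism onto the finite-slope $\chi$-eigenspace is the substance of the result you would need to reprove. If you do not want to carry that out, you should cite \cite[Proposition 3.10.3]{loe} as the paper does.
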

\begin{proof}
The formula is given by \cite[Proposition 3.10.3]{loe}, which shows that the closed points of $X_C$ and $X_E$ are the same. The general comparison theorem \cite[Proposition 7.2.8]{bc} implies that the rigid spaces are themselves the same, together with the usual extra structures.
\end{proof}
We now drop the subscript and refer to either $X$ as {\em the} eigenvariety. The main use for introducing Chenevier's eigenvariety is we have access to the BGG resolution attached for locally analytic principal series \cite{jones}. Let $\delta_k \in \mc T_{\Sigma_p}^{\alg,\geq 0}$ be a dominant weight. We consider the unique irreducible algebraic representation $W_k$ of $G_{\Sigma_p}$ whose highest weight is $k$ and we note that by definition
\begin{equation*}
F(W_k^\vee, K^p) \simeq \mc A_k(G, K^p).
\end{equation*}
Any highest weight vector in $W_k$ defines a natural $M_{\Sigma_p}$-equivariant inclusion $W_k^\vee \hookrightarrow \mc C_k$ \cite[Section 4.1]{che2}, which induces a $\mc H(K^p)^{\nr}\otimes \mc A_{\Sigma_p}$-equivariant inclusion on $p$-adic automorphic forms $j_k: F(W_k^\vee,K^p) \hookrightarrow F(\mc C_k, K^p)$. Recall that we use $\ell(w)$ to denote the Bruhat length of elements $w \in (S_3)_{\Sigma_p}$ and $w\cdot k$ to denote the ``dot'' action. If $F$ is an $\mc H(K^p)^{\nr}$-module then we use $F^{(\lambda)}$ to denote the {\em generalized} $\lambda$-eigenspace.

\begin{prop}\label{prop:jones-bgg}
Suppose that $k \in \mc T_{\Sigma_p}^{\alg}$ is a dominant weight and $\theta \in \mc T_{\Sigma_p}^{\sm}$ is locally constant. Then there is a long exact sequence
\begin{multline*}
0 \rightarrow F(W_{k}^\vee,K^p)^{(\lambda),\mc A_p=\theta} \overset{j_k}{\longrightarrow} F(\mc C_{\theta\delta_k}, K^p)^{(\lambda),\mc A_p} \rightarrow \prod_{\ell(w) = 1} F(\mc C_{\theta{\delta_{w\cdot k}}}, K^p)^{(\lambda),\mc A_p} \rightarrow \dotsb\\
\dotsb \rightarrow \prod_{\ell(w) = 3|\Sigma_p|} F(\mc C_{\theta\delta_{w\cdot k}}, K^p)^{(\lambda),\mc A_p}\rightarrow 0.
\end{multline*}
\end{prop}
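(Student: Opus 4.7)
My plan is to deduce the long exact sequence from Jones's dual BGG resolution of locally analytic principal series \cite{jones}. First I would invoke Jones's theorem, the locally analytic counterpart of the classical Bernstein--Gelfand--Gelfand resolution, to obtain an $M_{\Sigma_p}$-equivariant exact sequence
\begin{equation*}
0 \to W_k^\vee \to \mc C_{\delta_k} \to \prod_{\ell(w)=1} \mc C_{\delta_{w\cdot k}} \to \cdots \to \prod_{\ell(w)=3|\Sigma_p|} \mc C_{\delta_{w\cdot k}} \to 0
\end{equation*}
of locally analytic representations, arising by dualizing the BGG resolution of the irreducible $\mf g_{\Sigma_p}$-module of highest weight $k$ by Verma modules. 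Because $\theta$ is smooth it is fixed by the dot action of $S_3$, so twisting the whole sequence by $\theta$ yields the analogous $M_{\Sigma_p}$-equivariant resolution with weights replaced by $\theta\delta_{w\cdot k}$.

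Next I would apply the functor $F(-,K^p)$. Decomposing over the finite set $G(F^+)\backslash G(\mbf A_f)/I_{\Sigma_p}K^p$ of double cosets, one has a direct-sum decomposition
\begin{equation*}
F(V,K^p) \simeq \bigoplus_g V^{\Gamma_g},
\end{equation*}
where each $\Gamma_g = g^{-1}G(F^+)g \cap I_{\Sigma_p}K^p$ is a finite group whose order is invertible in $L$. Hence $F(-,K^p)$ is an exact functor on locally analytic $I_{\Sigma_p}$-representations, and applied termwise to the twisted BGG resolution it produces a long exact sequence of global sections. Under Chenevier's identification the leftmost term is $F(W_k^\vee,K^p)\simeq \mc A_k(G,K^p)$, and the map $j_k$ is precisely the inclusion induced by $W_k^\vee \hookrightarrow \mc C_{\delta_k}$.

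Finally I would descend to eigenspaces. The BGG differentials are equivariant for $\mc H(K^p)^{\nr}\otimes \mc A_{\Sigma_p}$ because both algebras act through operators commuting with the $M_{\Sigma_p}$-structure used to build the local resolution. Passage to the generalized $\lambda$-eigenspace for the unramified Hecke algebra is an exact operation and commutes with the differentials. The step I expect to be the main obstacle is preservation of exactness under the projection $\mc A_p=\theta$: I would argue that this follows from the compact-operator formalism at the heart of Chenevier's construction, which shows that $\mc A_p=\theta$ cuts out a finite-slope direct summand on which $\mc H(K^p)^{\nr}$ acts through a quotient with finite spectrum at each geometric point, so that generalized $\lambda$-eigenspaces are themselves direct summands and the resulting sequence remains exact.
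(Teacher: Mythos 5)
Your proposal is correct and follows essentially the same route as the paper: the paper's proof likewise twists by the smooth character $\theta$ (via the isomorphism $F(\mc C_{\chi},K^p)^{\mc A_p=\theta}\simeq F(\mc C_{\theta\chi},K^p)^{\mc A_p}$) and then cites Jones's Theorem 35 together with Chenevier's Lemma 4.3. The details you supply --- exactness of $F(-,K^p)$ from the finite double-coset decomposition into invariants under finite groups, and preservation of exactness under passage to finite-slope generalized eigenspaces via the compact-operator formalism --- are exactly the content of those two citations.
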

\begin{proof}
Since $\theta$ is smooth, it extends naturally to a character on $I_{\Sigma_p}B_{\Sigma_p}$ by the formula $\theta(xb) = \theta(b)$ and one has an $I_{\Sigma_p}B_{\Sigma_p}$-equivariant isomorphism $\mc C_{\chi} \otimes \theta \rightarrow \mc C_{\theta\chi}$ for any $\chi \in \mc T_{\Sigma_p}$. In particular, for any $\chi \in \mc T_{\Sigma_p}$ we have a natural Hecke equivariant isomorphism $F(C_{\chi},K^p)^{\mc A_p=\theta} \simeq F(C_{\theta\chi},K^p)^{\mc A_p}$. The result now follows from taking $\chi = \delta_k$ and applying the \cite[Theorem 35]{jones} (compare with \cite[Lemma 4.3]{che}).
\end{proof}

As an application we relate the generalized eigenspaces to the local geometry of the weight map $\chi: X \rightarrow \mc T_{\Sigma_p}$. In the following theorem we make use of a hypothesis on the global Galois representation at a point on the eigenvariety, for which the reader can refer to Section \ref{subsec:refined-family}.

\begin{theo}[Chenevier]\label{thm:chenevier-classicality}
Suppose that $z = (\chi_z,\lambda_z) \in X_{\cl}$ is a classical point and assume that the global representation $\rho_z$ is irreducible. If we factor $\chi_z = \delta_k \theta$ with $\theta$ a smooth character of $T_{\Sigma_p}$ then $j_k: F(W_k^\vee,K^p)^{(\lambda),\mc A_p=\theta} \rightarrow F(\mc C_{\theta\delta_k},K^p)^{(\lambda),\mc A_p}$ is an isomorphism if and only if the weight map $\chi$ is \'etale at $z$.
\end{theo}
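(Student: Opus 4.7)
The plan is to compare dimensions across the BGG resolution of Proposition \ref{prop:jones-bgg}. Since $j_k$ is always injective and the sequence is exact, $j_k$ is an isomorphism if and only if
\[
\dim F(\mc C_{\theta\delta_k}, K^p)^{(\lambda),\mc A_p} = \dim F(W_k^\vee, K^p)^{(\lambda),\mc A_p=\theta}.
\]

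Write $d$ for the right-hand side. By \eqref{eqn:auto-forms} together with strong multiplicity one for the definite unitary group $G$ (applicable because $\rho_z$ is irreducible), $d$ equals the dimension of $\pi_{z,f}^{K^{p\Sigma^p}}$, with the $p$-component cut down by the refinement $\theta$. For the overconvergent side $B := F(\mc C_{\theta\delta_k}, K^p)^{(\lambda),\mc A_p}$, I would reinterpret it via Chenevier's construction: over a small affinoid $V \ni \chi(z)$ in $\mc T_{\Sigma_p}$, the Banach module $F(\mc C_{\chi_V}, K^p)^{(\leq h)}$ is finite projective over $\mc O(V)$, and its $(\lambda)$-generalized part $\mc M_V$ is a coherent sheaf on $X|_V$ supported (after shrinking $V$) at the single point $z$, using that the map $x\mapsto(\chi_x,\lambda_x)$ is injective on $X(\overline{\mbf Q}_p)$. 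Invoking multiplicity one once more, now applied in a family of classical points accumulating at $z$, one deduces that $\mc M_V$ is locally free of rank $d$ over $\mc O_{X|_V}$ near $z$, whence
\[
\dim B = d\cdot \mrm{length}\bigl(\mc O_{X,z}/\mf m_{\chi(z)}\mc O_{X,z}\bigr) = d\cdot e(z),
\]
with $e(z)$ the local ramification index of $\chi$ at $z$.

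Combining, $j_k$ is an isomorphism if and only if $e(z)=1$. Since $\chi:X\to\mc T_{\Sigma_p}$ is a finite morphism between reduced rigid spaces of the common equidimension $3|\Sigma_p|$ and is flat at the classical point $z$ by a standard miracle-flatness argument (using Cohen-Macaulayness of $X$ at classical points with irreducible Galois representation), étaleness at $z$ reduces to unramifiedness at $z$, i.e.\ to $e(z)=1$. The main obstacle is the identification $\dim B = d\cdot e(z)$: one must rigorously describe the coherent sheaf on $X$ encoding $p$-adic automorphic forms, verify it is locally free of rank $d$ at the classical point $z$ using automorphic multiplicity one for $G$ combined with the irreducibility of $\rho_z$, and relate its fiber dimension at $\chi(z)$ to the local ramification index. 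Once this geometric input is in place, the rest of the argument is formal.
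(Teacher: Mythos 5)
Your overall strategy --- reduce the statement to the dimension count $\dim F(\mc C_{\theta\delta_k},K^p)^{(\lambda),\mc A_p} = d\cdot(\text{local degree of $\chi$ at $z$})$ and then identify ``degree one'' with \'etaleness --- is the same as the paper's, which simply assembles the relevant lemmas from the proof of \cite[Theorem 4.8]{che}. The gap is in how you produce the local degree. You get $\dim B = d\cdot e(z)$ with $e(z)=\mrm{length}\bigl(\mc O_{X,z}/\mf m_{\chi(z)}\mc O_{X,z}\bigr)$ by asserting that the coherent module $\mc M_V$ is locally free of rank $d$ over $\mc O_X$ \emph{at the point $z$ itself}. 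That is strictly stronger than what the construction and Chenevier's lemmas provide: one has finite projectivity of $\mc M_V$ over $\mc O(V)$ (weight space), and local freeness of rank $d$ over $\mc O_X$ only at the nearby classical points $z_0\neq z$, where $\chi$ is already \'etale and $\delta^{\an}(z_0)=\delta^{\cl}(z_0)$. Generic freeness over $\mc O_X$ together with projectivity over $\mc O(V)$ does not yield freeness of $\mc M_{V,z}$ over the possibly ramified local ring $\mc O_{X,z}$, and ``multiplicity one applied in a family'' does not supply it. The paper's version sidesteps this: since $z$ is the unique point of $\Omega\cap\chi^{-1}(\chi(z))$ and the function $\chi_0\mapsto\sum_{x\in\Omega\cap\chi^{-1}(\chi_0)}\delta^{\an}(x)$ is constant on $V$ (this uses only projectivity over $\mc O(V)$), one evaluates that constant on a nearby classical fiber, where every term equals $d=\delta^{\cl}(z)$, obtaining $\delta^{\an}(z)=d\cdot\deg_z\chi|_\Omega$ without ever discussing the module structure over $\mc O_{X,z}$. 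You should reorganize your computation of $\dim B$ along these lines.

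A secondary issue: the passage from $e(z)=1$ (unramifiedness) to \'etaleness should not invoke miracle flatness, since Cohen--Macaulayness of $X$ at $z$ is not known. It is both unnecessary and unavailable: degree one plus finiteness makes $\mc O_{\mc T_{\Sigma_p},\chi(z)}\to\mc O_{X,z}$ surjective by Nakayama, and a surjection from the regular (hence integral) local ring $\mc O_{\mc T_{\Sigma_p},\chi(z)}$ onto a ring of the same Krull dimension is an isomorphism; this only uses that $X$ is reduced and equidimensional of dimension $3|\Sigma_p|$ and that $\mc T_{\Sigma_p}$ is smooth.
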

\begin{proof}
Since this is contained in the proof of \cite[Theorem 4.8]{che}, we just sketch the deduction from that work. Let $z$ be as in the statement of the theorem. We consider the functions
\begin{equation*}
\delta^{\an}(z_0) := \dim F(\mc C_{\chi_{z_0}},K^p)^{(\lambda_{z_0}),\mc A_p}
\end{equation*}
defined on some small affinoid open neighborhood $z \in \Omega \subset X$ and
\begin{equation*}
\delta^{\cl}(z_0) := \dim F(W_{\delta_{\kappa(z_0)}}^{\vee},K^p)^{(\lambda_{z_0}),\mc A_p=\theta_{z_0}}
\end{equation*}
defined on $\Omega_{\cl} := \Omega \cap X_{\cl}$. Then the proof of \cite[Theorem 4.8]{che} (as well as the lemmas of \cite[\S 4.4]{che}) shows that, under the hypothesis that $\rho_z$ is irreducible, one can choose $\Omega$ sufficiently small so that:
\begin{itemize}
\item $z$ is the only geometric point of $\chi^{-1}(\chi(z)) \cap \Omega$.
\item The function $\chi_0 \mapsto \sum_{x \in \Omega \cap \chi^{-1}(\chi_0)} \delta^{\an}(x)$ is constant on $\Omega$.
\item The map $z_0 \mapsto \delta^{\cl}(z_0)$ is constant on $\Omega_{\cl} := \Omega \cap X_{\cl}$.
\item If $z_0 \in \Omega_{\cl}$ with $z_0 \neq z$ then $\delta^{\an}(z_0) = \delta^{\cl}(z_0)$ and $\chi$ is \'etale at $z_0 \in \Omega_{\cl}$.
\end{itemize}
It follows that
\begin{equation*}
\delta^{\an}(z) = \sum_{\chi(z_0) = \chi_0} \delta^{\cl}(z_0) = \delta^{\cl}(z)\cdot \deg_z \chi|_{\Omega}
\end{equation*}
for any classical weight $\chi_0$ sufficiently close to $\chi$ in $\mc T_{\Sigma_p}$. Thus we see that $\chi$ is \'etale at $z$ if and only if $\delta^{\cl}(z) = \delta^{\an}(z)$, which holds if and only if $j_k$ is an isomorphism.
\end{proof}

\begin{rema}
The previous result, in more generality, is also given in \cite[Theorem 9.9]{br}.
\end{rema}

\subsection{The refined family of Galois representations}\label{subsec:refined-family}
 The eigenvariety carries a $p$-adic family of $p$-adic Galois representations. If $\tilde w$ is a place of $F$ and $\rho$ is a representation of the Galois group $G_F$ then we use the notation $\rho_{\tilde w}$ to denote the restriction $\rho_{\tilde w} := \rho|_{G_{F_{\tilde w}}}$. In the case that $v \in \Sigma_p$ we write $\rho_v = \rho_{\tilde v}$ for the choice $\tilde v \mid v$ which we made in Section \ref{subsec:def-eigenvarieties}.
 
 Suppose that $\pi$ is a classical automorphic representation on $G(\mbf A_{F^+})$ of tame level $K^p$ and weight $k = (k_v)_{v \in \Sigma_p}$. The work of Blasius and Rogawski (and many others in a more general situation) associates to $\pi$ a $p$-adic Galois representation
\begin{equation*}
\rho_{\pi} : G_{F,\Sigma} \rightarrow \GL_3(\overline{\mbf Q}_p).
\end{equation*}
It satisfies local-global compatibility at each finite place in the following sense:
\begin{enumerate}
\item If $\tilde w \notin \Sigma$ is a place of $F$ split over $F^+$ then the Frobenius semisimple Weil-Deligne representation associated to $\rho_{\tilde w}$ corresponds to the representation $\pi_w|\det|^{-1}$ of $\GL_3(F_{\tilde w}) \simeq  G(F_{w}^+)$ under the local Langlands correspondence for $\GL_3(F_{\tilde w})$.
\item If $v \in \Sigma_p$ then $\rho_{\pi,v}$ is de Rham with Hodge-Tate weights $h_{i,v} = -k_{i,v} + i - 1$.
\item If $\pi_{\Sigma_p}$ is unramified then $\rho_{\pi,v}$ is crystalline for each $v \in \Sigma_p$. Moreover, the set of crystalline eigenvalues of $\rho_{\pi,v}$ are given by $\{p^2\theta_{1,v}(p), p\theta_{2,v}(p), \theta_{3,v}(p)\}$ for any choice of refinement $\theta$ for $\pi_{\Sigma_p}$.
\end{enumerate}
By Chebotarev's density theorem, the first condition classifies $\rho_\pi$ up to the semi-simplification.

Consider again the eigenvariety $X = X_{K^p}$. The above associates to each classical point $z=(\theta\delta_k,\lambda) \in X_{\cl}$ the Galois representation $\rho_z := \rho_{\pi}$, depending only on $\lambda$. The choice of a refinement for $\pi_{\Sigma_p}$ has the following interpretation. Then, for all $v$, the third point above implies that we have a canonical choice
\begin{equation}\label{eqn:refine-on-family}
R_{z,v} := (\phi_{1,v},\phi_{2,v},\phi_{3,v}) = (p^2\theta_{1,v}(p), p\theta_{2,v}(p), \theta_{3,v}(p))
\end{equation}
of a refinement, in the sense of Section \ref{subsec:galois-refine}, of the local crystalline representation $\rho_{z,v}$.

\begin{defi}\label{defi:defi-critical}
The weight type of a point $z \in X_{\cl}$ is the $\Sigma_p$-tuple $\tau = (\tau_v)_{v \in \Sigma_p}$ where $\tau_v$ is the weight-type of the refinement $R_{z,v}$ given in \eqref{eqn:refine-on-family}. We say that $z$ is non-critical if it is of weight-type $(1_v)_v$ and critical of weight-type $\tau$ otherwise.
\end{defi}

For $z \in X_{\cl}$, let $t_z: G_{F,\Sigma} \rightarrow \overline{\mbf Q}_p$ be the function $g \mapsto \tr(\rho_z(g))$. Then, $t_z$ is a three-dimensional pseudocharacter of $G_{F,\Sigma}$ and by \cite[Proposition 7.5.4]{bc}, the map $z \mapsto t_z$ extends to a global pseudocharacter
\begin{equation}\label{eqn:pseudocharacter-on-family}
t: G_{F,\Sigma} \rightarrow \Gamma(X,\mc O_{X}^{\rig}).
\end{equation}
In particular, for each point $x \in X(\overline{\mbf Q}_p)$, specializing the pseudocharacter $t$ at $x$ we get a three-dimensional pseudocharacter $t_x: G_{F,\Sigma} \rightarrow \overline{\mbf Q}_p$. By \cite[Theorem 1(2)]{tay} there is a unique semi-simple representation $\rho_x: G_{F,\Sigma} \rightarrow \GL_3(\overline{\mbf Q}_p)$ such that $\tr(\rho_x) = t_x$. By the first property of the Galois representation over $X_{\cl}$, at every $x=(\chi_x,\lambda_x)\in X$, the function $\tr(\rho_x)$ at unramified places agrees with the value of $\lambda_x$ on $\mc H(K^p)^{\nr}$ in the usual way.

Let us consider the issues at the place $v\in \Sigma_p$. Denote by $\eta_{\Sigma_p}$ the composition 
\begin{equation}\label{eqn:hodge-tate-weight-map}
X\overset{\kappa}{\longrightarrow} \mc W_{\Sigma_p} \overset{\log}{\longrightarrow} \prod_{v \in \Sigma_p} \left(\mbf G_a^{\rig}\right)^3 \overset{s}{\longrightarrow} \prod_{v \in \Sigma_p} \left(\mbf G_a^{\rig}\right)^3,
 \end{equation}
 where $s$ is the affine change of coordinates
\begin{align*}
\left(\mbf G_a^{\rig}\right)^3 &\overset{s}{\longrightarrow} \left(\mbf G_a^{\rig}\right)^3\\
(u_1,u_2,u_3) &\mapsto (-u_1, -u_2 + 1, -u_3 + 2).
\end{align*}
If $x \in X(\overline{\mbf Q}_p)$ then we write $\eta_{\Sigma_p}(x) = (\eta_{i,v}(x))_v \in \prod_{v \in \Sigma_p} \left(\overline{\mbf Q}_p\right)^{3}$. In this notation, if $z \in X_{\cl}$ then $\eta_{i,v}(z) = h_{i,v}$ is the $i$th Hodge-Tate weight at the place $\tilde v$. Thus by \cite[Lemma 7.5.12]{bc} we have that for a general $x \in X(\overline{\mbf Q}_{p})$, $\eta_{\Sigma_p}(x)$ is the $\Sigma_p$-tuple of Hodge-Tate-Sen weights for $\rho_{x}$.

If $x \in X(L)$ and $v \in \Sigma_p$ we denote by $\chi_{x,v} = \chi_{x,v,1}\otimes \chi_{x,v,2}\otimes \chi_{x,v,3}$ the $v$th coordinate of $\chi_x \in \mc T_{\Sigma_p}(L)$. For $i=1,2,3$ and $v \in \Sigma_p$ we define analytic functions $F_{i,v} \in \Gamma(X,\mc O_X^{\rig})$ by
\begin{equation}\label{eqn:F_i-defn}
F_{i,v}(x) = p^{4-2i}\chi_{x,v,i}(p).
\end{equation}
If $z = (\theta\delta_k,\lambda) \in X_{\cl}$ is a classical point then
\begin{align*}
p^{\eta_{i,v}(z)}F_{i,v}(z) &= p^{h_{i,v}}p^{4-2i}\theta_{i,v}(p)p^{k_{i,v}}\\
&= p^{3-i}\theta_{i,v}(p).
\end{align*}
Thus for $z \in X_{\cl}$ the collection $R_z = (R_{z,v})$ of refinements is given by 
\begin{equation*}
R_{z,v} = (p^{\eta_{1,v}(z)}F_{1,v}(z),p^{\eta_{2,v}(z)}F_{2,v}(z), p^{\eta_{3,v}(z)}F_{3,v}(z)).
\end{equation*}
In particular, $X$ together with the Galois pseudocharacter $t$ of $G_{F,\Sigma}$ forms a refined family of Galois representations in the sense of \cite[Ch. 4]{bc}. This implies that we have the analytic continuation of crystalline periods:

\begin{prop}\label{prop:crystalline-interpolation}
For each $x \in X(\overline{\mbf Q}_p)$, each $v \in \Sigma_p$ and each $i$ we have
\begin{equation*}
\Fil^0 D_{\cris}(\wedge^i\rho_{x,v}(\kappa_{1,v}(x)^{-1}\dotsb \kappa_{i,v}(x)^{-1}z^{i-1}))^{\varphi = F_{1,v}(x)\dotsb F_{i,v}(x)} \neq 0.
\end{equation*}
In particular, if $\kappa(x)$ is algebraic (as in \eqref{eqn:alg-wts}) then
\begin{equation*}
\Fil^{\eta_{1,v}(x)+\dotsb +\eta_{i,v}(x)}D_{\cris}(\wedge^i \rho_{x,v})^{\varphi = p^{\eta_{1,v}(x)}F_{1,v}(x)\dotsb p^{\eta_{i,v}(x)}F_{i,v}(x)} \neq 0.
\end{equation*}
\end{prop}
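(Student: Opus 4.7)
The plan is to apply the theory of refined families of Galois representations developed in \cite[Chapter 4]{bc}. All the ingredients are already assembled: the rigid space $X$, the pseudocharacter $t$, the Hodge--Tate--Sen weight map $\eta_{\Sigma_p}$ and the analytic Frobenius-parameter functions $F_{i,v}$. The assertion that this data forms a \emph{refined family}, announced just above the proposition, amounts to verifying three conditions per $v \in \Sigma_p$: (a) the Sen weights of $\rho_{x,v}$ at $x$ are the coordinates $(\eta_{i,v}(x))_i$, which is \cite[Lemma 7.5.12]{bc}; (b) at a classical point $z$ the crystalline Frobenius eigenvalues of $\rho_{z,v}$ are the values $(p^{\eta_{i,v}(z)}F_{i,v}(z))_i$, which is the computation immediately preceding the statement; and (c) the locus of classical points with non-critical refinement at every $v$ accumulates at every point of $X$, which is the standard density/accumulation statement for eigenvarieties of definite unitary groups, via the small-slope criterion.

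Granted the refined-family structure, the first displayed non-vanishing is precisely the content of the main analytic-continuation theorem of \cite[Chapter 4]{bc}, the higher-rank generalization of Kisin's theorem on analytic continuation of crystalline periods. The geometric picture is as follows: at each non-critical classical $z$, the line in $D_{\cris}(\wedge^i\rho_{z,v})$ with $\varphi$-eigenvalue $\prod_j p^{\eta_{j,v}(z)}F_{j,v}(z)$ is weakly admissible with Hodge--Tate weight $\sum_j \eta_{j,v}(z)$, and the explicit twist by $\kappa_{1,v}^{-1}\cdots\kappa_{i,v}^{-1}z^{i-1}$ renormalizes so that the twisted line falls into $\Fil^0$ at every such $z$. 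Kisin's flatness argument for crystalline periods in $p$-adic families then propagates the non-vanishing of $\Fil^0 D_{\cris}(\wedge^i\rho_{x,v}(\kappa_{1,v}(x)^{-1}\cdots\kappa_{i,v}(x)^{-1}z^{i-1}))^{\varphi = F_{1,v}(x)\cdots F_{i,v}(x)}$ from the Zariski-dense non-critical classical locus to all of $X(\overline{\mbf Q}_p)$.

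For the second formula, one specializes to $x$ with $\kappa(x)$ algebraic, so $\kappa_{i,v}(x)|_{\mbf Z_p^\times} = z^{k_{i,v}}|_{\mbf Z_p^\times}$ for integers $k_{i,v}$ satisfying $\eta_{i,v}(x) = -k_{i,v} + i - 1$. Then $\kappa_{1,v}(x)^{-1}\cdots\kappa_{i,v}(x)^{-1}z^{i-1}$ is an explicit crystalline character whose Hodge--Tate weight and Frobenius eigenvalue are computable powers of $p$. Untwisting $\wedge^i\rho_{x,v}(\kappa_{1,v}^{-1}\cdots\kappa_{i,v}^{-1}z^{i-1})$ shifts the degree of the Hodge filtration from $0$ to $\eta_{1,v}(x)+\cdots+\eta_{i,v}(x)$ on the untwisted module, and rescales the Frobenius eigenvalue by the matching factor $p^{\eta_{1,v}(x)}\cdots p^{\eta_{i,v}(x)}$, producing the stated non-vanishing. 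The main obstacle in this whole argument is purely administrative: lining up the conventions of this paper (particularly the factor $p^{4-2i}$ in the definition of $F_{i,v}$ and the class-field-theory normalization $\varepsilon = z|z|$) with those of \cite{bc}. The substantive input, Kisin's theorem on analytic continuation of crystalline periods, is imported wholesale.
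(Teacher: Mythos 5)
Your proposal is correct and follows essentially the same route as the paper, which simply observes that $(X,t,\eta_{\Sigma_p},F_{i,v})$ is a refined family in the sense of \cite[Ch.~4]{bc} and then cites the analytic continuation of crystalline periods from \cite{kis-omf} and \cite{bc} (with \cite{liu,kpx} as alternatives); your untwisting derivation of the second display from the first is the intended, if unstated, bookkeeping.
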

\begin{proof}
The version we have stated here may be deduced from \cite{kis-omf}. For deformation-theoretic purposes in the next section, we need \cite{bc}. New proofs, with stronger conclusions, can be found in \cite{liu,kpx}.
\end{proof}

\section{On companion and bad points}\label{sec:com-bad} 

In this section we introduce the notion of companion points to classical points and what we call, following \cite{be}, bad points on the eigenvariety. Over the classical locus, the relationship between the two concepts is that a point is bad if and only if it has a companion point. We note that the concept of badness is important beyond the classical locus, as it is possible that the companion point to a classical point is still bad (this phenomena is not visible for ${\mrm{GL}_2}_{/\mbf Q}$, cf.  the proof of \cite[Th\'eor\`eme 5.7.2]{be}).

The first two subsections deal with companion points. In order to construct companion points we use the techniques of \cite{ber2}. Thus our first result of this section describes the classical points on the eigenvariety where the weight map ramifies. When combined Proposition \ref{prop:jones-bgg} and Theorem \ref{thm:chenevier-classicality} we are able to produce a simple criterion for the existence of companion points (see Theorem \ref{thm:comp-pt-theorem}). 

The third subsection contains the definition of bad points. Here we are inspired by the notion of ``strongly linked'' weights in Lie theory and the eventual proof of our adjunction formula (see Theorem \ref{theo:analytic-maps}). 

In the final subsection, we specialize to the generic ordinary locus. We show that to certain generic ordinary points (which are, by definition, classical) there is a canonical choice of companion point and that this companion point is, in turn, not bad.

\subsection{Ramification of the weight map}
In this section we use the refined family of Galois representations to study the infinitesimal properties of the weight map $\kappa$. Our approach follows \cite{ber2} in the sense that we use deformation rings of Galois representations. See also \cite[Theorem 9.7]{br} for a different proof of Theorem \ref{theo:consant-weights}.

Fix a local representation $\rho: G_{\mbf Q_p} \rightarrow \GL_3(L)$ with $L/\mbf Q_p$ a finite extension. We consider the category $\mf{AR}_L$ of local Artin $L$-algebras with residue field $L$. If $A \in \mf{AR}_L$ then we denote by $\mf m_A$ the maximal ideal of $A$. 

Define the functor $\mf X_{\rho}^{\univ}: \mf{AR}_L \rightarrow \underline\Set$ whose $A$-points are given by deformations $\rho_A : G_{\mbf Q_p} \rightarrow \GL_3(A)$ as in \cite{maz}. In the case that $\rho$ is indecomposable, the functor $\mf X_\rho^{\univ}$ is pro-representable in $\mf{AR}_L$, i.e. there is a complete local noetherian $L$-algebra $R_{\rho}^{\univ}$ with residue field $L$ such that if $A \in \mf{AR}_L$ then $\Hom(R^{\univ}_\rho,A) = \mf X_\rho^{\univ}(A)$. The homomorphisms on the left-hand side are local, continuous morphisms.

Assume now that $\rho$ is crystalline with distinct crystalline eigenvalues and regular with integer Hodge-Tate-Sen weights $h_1 < h_2 < h_3$ as in Section \ref{subsec:galois-refine}. If $\rho_A \in \mf X_{\rho}^{\univ}(A)$ then we denote by $P_{\Sen,\rho_A} \in A[T]$ the Sen polynomial of $\rho_A$. By Hensel's lemma and the assumption that $P_{\Sen,\rho}$ has distinct roots, the polynomial $P_{\Sen,\rho_A}$ splits into linear factors. We denote by $h_{i,\rho_A} \in A$ the unique root of $P_{\Sen,\rho_A}$ such that $h_{i,\rho_A} \equiv h_i \bmod \mf m_A$. Since $A$ is Artinian and the cyclotomic character takes values in $\mbf Q_p^\times$ we can naturally consider twists of $A$-valued representations by the $h_{i,\rho_A}$-th power of the cyclotomic character.

We now make the choice of a refinement $R$ of $\rho$. Recall that this means we have chosen an ordering $(\phi_1,\phi_2,\phi_3)$ for the crystalline eigenvalues appearing in the three-dimensional $\mbf Q_p$-vector space $D_{\cris}(\rho)$. We define $F_i := p^{-h_i}\phi_i$ for $i=1,2,3$ and notice two things. First, $h_i$ is the $i$th Hodge-Tate weight and does not depend on the refinement $R$, whereas $F_i$ does. Second,  $F_1\dotsb F_i$ is a simple eigenvalue for the crystalline Frobenius on $D_{\cris}((\wedge^i \rho)(h_1+\dotsb +h_i))$ for $i=1,2,3$.

With this in mind we define a subfunctor $\mf X_{\rho,R}^{\per} \subset \mf X_{\rho}^{\univ}$ as follows. A deformation $\rho_A$ belongs to $\mf X_{\rho,R}^{\per}(A)$ if and only if  for each $i$ we have $\Fil^0 D_{\cris}(\wedge^i \rho_{A}(h_{1,\rho_A}+\dotsb+h_{i,\rho_A}))^{\varphi = F_{1,A}\dotsb F_{i,A}}$ is free of rank one over $A$ for some tuple $(F_{1,A},F_{2,A},F_{3,A})$ of elements in $A$ such that $F_{j,A} \equiv F_j \bmod \mf m_A$. The functor $\mf X_{\rho,R}^{\per}$ is a relatively representable subfunctor of $\mf X_{\rho}^{\per}$ by \cite[Proposition 8.13]{kis-omf}. Thus, in the case that $\rho$ is indecomposable we have a universal deformation ring $R_{\rho,R}^{\per}$ parameterizing deformations whose successive exterior powers have interpolations of crystalline eigenvalues (cf. Proposition \ref{prop:crystalline-interpolation}).

This section so far has dealt only with representations of the local Galois group $G_{\mbf Q_p}$. We now turn our focus to representations of global Galois groups. For concreteness, we return to the eigenvariety $X$ as in the previous sections and consider a classical point $z \in X_{\cl}(L)$, with $L$ a finite extension of $\mbf Q_p$. The point $z$ has associated to it the global Galois representation $\rho_z: G_{F,\Sigma} \rightarrow \GL_3(L)$. We consider the functor $\mf X_{z}^{\per}: \mf{AR}_L \rightarrow \underline{\Set}$ whose $A$-points parameterize deformations $\rho_A: G_{F,\Sigma} \rightarrow \GL_3(L)$ such that for each place $v \in \Sigma_p$ the deformation $\rho_{A,v}$ of $\rho_{z,v}$ defines a point of $\mf X_{\rho_{z,v},R_{z,v}}^{\per}(A)$ where $R_{z,v}$ is the natural refinement \eqref{eqn:refine-on-family} at the point $z \in X(L)$.

\begin{prop}\label{eqn:surjective-def-ring}
Assume that $z \in X$ is a classical point such that 
\begin{enumerate}
\item $\rho_z$ is absolutely irreducible and
\item for each $v \in \Sigma_p$, $\rho_{z,v}$ has distinct crystalline eigenvalues.
\end{enumerate}Then the functor $\mf X_{z}^{\per}$ is representable by a complete local noetherian ring $R_{z}^{\per}$ and there is a natural surjective map $R_{z}^{\per} \twoheadrightarrow \widehat{\mc O}_{X,z}^{\rig}$.
\end{prop}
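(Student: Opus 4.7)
The plan is to combine standard Mazur--Schlessinger deformation theory for the global Galois side with the refined-family structure on the eigenvariety recalled in Section~\ref{subsec:refined-family}.

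First I would set up representability. Since $\rho_z$ is absolutely irreducible, the unrestricted deformation functor $\mf X_z^{\univ}$ on $\mf{AR}_L$ is pro-representable by a complete local noetherian $L$-algebra $R_z^{\univ}$ by Mazur's theorem (the tangent space $H^1(G_{F,\Sigma},\ad\rho_z)$ is finite-dimensional because $G_{F,\Sigma}$ satisfies Mazur's finiteness condition $\Phi_p$). The subfunctor $\mf X_z^{\per}$ is cut out of $\mf X_z^{\univ}$ by imposing, at each $v\in\Sigma_p$, the local period condition defining $\mf X^{\per}_{\rho_{z,v},R_{z,v}}$. Each of these local conditions is relatively representable by \cite[Proposition~8.13]{kis-omf}, so the intersection over $v\in\Sigma_p$ remains relatively representable in $\mf X_z^{\univ}$. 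It follows that $\mf X_z^{\per}$ is pro-representable by an appropriate quotient $R_z^{\per}$ of $R_z^{\univ}$, which is then automatically complete local noetherian.

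Second, I would construct the natural map $\phi\colon R_z^{\per}\to\widehat{\mc O}_{X,z}^{\rig}$ from the refined family on $X$. Completing the global pseudocharacter $t$ of \eqref{eqn:pseudocharacter-on-family} at $z$ gives a three-dimensional pseudocharacter $t^\wedge\colon G_{F,\Sigma}\to\widehat{\mc O}_{X,z}^{\rig}$ whose reduction mod $\mf m_z$ is $\tr\rho_z$. Since $\rho_z$ is absolutely irreducible, Nyssen--Rouquier produces a unique (up to strict equivalence) deformation $\rho_z^{\wedge}\colon G_{F,\Sigma}\to\GL_3(\widehat{\mc O}_{X,z}^{\rig})$ lifting $\rho_z$ and having trace $t^\wedge$. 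Read through the Artin quotients $\widehat{\mc O}_{X,z}^{\rig}/\mf m_z^n$, this is a point of $\mf X_z^{\univ}(\widehat{\mc O}_{X,z}^{\rig})$. The key point is that it actually lies in $\mf X_z^{\per}(\widehat{\mc O}_{X,z}^{\rig})$: this is precisely the content of Proposition~\ref{prop:crystalline-interpolation}, with the deformed crystalline Frobenius eigenvalues given by the analytic functions $F_{i,v}$ of \eqref{eqn:F_i-defn} localized and completed at $z$, and the deformed Sen weights given by $\eta_{\Sigma_p}$. The universal property of $R_z^{\per}$ then yields $\phi$.

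Third, and this is the main obstacle, I would prove surjectivity of $\phi$. The strategy is to show that $\widehat{\mc O}_{X,z}^{\rig}$ is topologically generated over $L$ by elements in the image of $\phi$. From the characterization of $X$ recalled in Section~\ref{subsec:def-eigenvarieties}, the assignment $x\mapsto(\chi_x,\lambda_x)$ is an injection, and in fact (via \cite[Proposition~7.2.8]{bc}) identifies $\widehat{\mc O}_{X,z}^{\rig}$ with a quotient of the completed local ring at $(\chi_z,\lambda_z)$ of $\mc T_{\Sigma_p}\times\Spec\mc H(K^p)^{\nr}$; hence $\widehat{\mc O}_{X,z}^{\rig}$ is topologically generated by the coordinates of $\chi$ together with the images of $\lambda(T)$ for $T\in\mc H(K^p)^{\nr}$. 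The Hecke eigenvalues at unramified split places coincide, by local-global compatibility in the family, with traces of Frobenius of $\rho_z^{\wedge}$ and so lie in the image of $\phi$. For the coordinates of $\chi$, the decomposition \eqref{eqn:F_i-defn} splits them into the Hodge-Tate-Sen weights (which lie in the image, since they are read off from the Sen polynomial of $\rho_z^{\wedge}|_{G_{\mb Q_p}}$ via \cite[Lemma~7.5.12]{bc}) and the refinement coordinates $F_{i,v}$ (which lie in the image by the very definition of $\mf X_z^{\per}$, whose universal object carries tautological lifts of the $F_{i,v}$). The subtlest verification will be ensuring that the Sen weights computed on the deformation side match the normalization of $\eta_{\Sigma_p}$ on the eigenvariety side at the level of $\widehat{\mc O}_{X,z}^{\rig}$, but this is a standard compatibility.
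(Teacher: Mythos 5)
Your proposal is correct and follows essentially the same route as the paper: representability via absolute irreducibility plus relative representability of the local period conditions, the map to $\widehat{\mc O}_{X,z}^{\rig}$ via Rouquier's theorem applied to the completed pseudocharacter together with Proposition \ref{prop:crystalline-interpolation}, and surjectivity from the fact that the local ring of the eigenvariety is topologically generated by the weight/refinement coordinates and the unramified Hecke eigenvalues. The paper dismisses the last step as ``standard arguments''; your expansion of it (Frobenius traces for $\lambda$, Sen weights and the tautological $F_{i,v}$ for the coordinates of $\chi$) is exactly the intended argument.
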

\begin{proof}
The functor $\mf X_{z}^{\per}$ being representable follows from the irreducibility hypothesis on $\rho_z$, each local deformation functor $\mf X_{\rho_{z,v},R_{z,v}}^{\per}$ being relatively representable over $\mf X_{\rho_{z,v}}$ and basic properties of fiber products of relatively representable functors.

Now consider the pseudocharacter \eqref{eqn:pseudocharacter-on-family} over the family $X$. We specialize it to a pseudocharacter $t_z^{\rig}: G_{F,\Sigma} \rightarrow \mc O_{X,z}^{\rig}$ . Since $\rho_z$ is absolutely irreducible and $\mc O_{X,z}^{\rig}$ is Henselian, \cite[Corollarie 5.2]{rou} and \cite[Lemma 4.3.7]{bc} imply that there is a unique semisimple representation $\widehat{\rho}_z^{\rig}: G_{F,\Sigma} \rightarrow \GL_3(\widehat{\mc O}_{X,z}^{\rig})$ whose trace is $t_z^{\rig}$. By the properties of the Galois representation in Section \ref{subsec:refined-family}, the representation $\widehat{\rho}_z^{\rig}$ deforms $\rho_z$. Proposition \ref{prop:crystalline-interpolation} implies that $\widehat{\rho}_{z}^{\rig}$ defines a point $R_z^{\per} \rightarrow \widehat{\mc O}_{X,z}^{\rig}$. The surjectivity of this map follows easily from the definition of $\widehat{\rho}_z^{\rig}$ and standard arguments.
\end{proof}

\begin{rema}
One may ask when is the surjection from Proposition \ref{eqn:surjective-def-ring} an isomorphism. After taking into account behavior at bad places and imposing a natural conjugate self-duality condition, a partial answer is  given in \cite{ber-thesis}.
\end{rema}

We will now use deformation rings to show that certain differences of weights are infinitesimally constant over $X$. Consider a point $z \in X_{\cl}$ and the weight-type $\tau_z \in (S_3)_{\Sigma_p}$ from Definition \ref{defi:defi-critical}. The point $z$ is non-critical, by definition, if and only if $\tau_{z,v} = 1$ for each $v \in \Sigma_p$. Denote by $T_{X,z}$ the Zariski tangent space of $X$ at the point $z$. For each place $v \in \Sigma_p$ we have the differential $d\eta_v = (d\eta_{i,v}): T_zX \rightarrow L^{\oplus 3}$.
\begin{theo}\label{theo:consant-weights}
If $z \in X_{\cl}$ such that
\begin{itemize}
\item $\rho_z$ is absolutely irreducible and
\item for each $v \in \Sigma_p$, $\rho_{z,v}$ has distinct crystalline eigenvalues
\end{itemize}
then $d(\eta_{i,v}-\eta_{\tau_{z,v}(i),v}) = 0$ for each $v \in \Sigma_p$.
\end{theo}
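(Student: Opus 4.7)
The plan is to follow the strategy used by \cite{ber2} in the $\GL_2$ setting, transferring the infinitesimal analysis of $X$ at $z$ to the refined deformation ring $R_z^{\per}$ via the surjection $R_z^{\per} \twoheadrightarrow \widehat{\mc O}_{X,z}^{\rig}$ from Proposition \ref{eqn:surjective-def-ring}. Fix $v \in \Sigma_p$; since the defining conditions of $R_z^{\per}$ factor over $\Sigma_p$, we may work one place at a time. A tangent vector $\xi \in T_{X,z}$ lifts to a deformation $\rho_A$ with $A = L[\epsilon]/\epsilon^2$ satisfying the refined crystalline period conditions place-by-place, so it suffices to show $d(\eta_{i,v} - \eta_{\tau_{z,v}(i),v})(\xi) = 0$ in the refined local deformation at $v$.

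The heart of the argument is an infinitesimal version of the analytic continuation of crystalline periods. Fix $v$ and write $\tau = \tau_{z,v}$. For each $i = 1, 2, 3$, the defining condition of $\mf X_{\rho_{z,v}, R_{z,v}}^{\per}$ produces a rank-one free $A$-submodule of $\Fil^0 D_{\cris}(\wedge^i \rho_{A,v}(h_{1,\rho_{A,v}} + \cdots + h_{i,\rho_{A,v}}))$ on which $\varphi$ acts by $F_{1,A,v} \cdots F_{i,A,v}$. At $z$ this is the $\varphi$-eigenline in $\wedge^i D_{\cris}(\rho_{z,v})$ with eigenvalue $\phi_{1,v} \cdots \phi_{i,v}$, whose Hodge-Tate weight is $h_{\tau(1),v} + \cdots + h_{\tau(i),v}$. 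Since the crystalline eigenvalues of $\rho_{z,v}$ are distinct, the Sen weights of $\wedge^i \rho_{z,v}$ are distinct as well, so the deforming eigenline carries a well-defined Sen weight function along the family, namely the analytic continuation $\eta_{\tau(1),v,A} + \cdots + \eta_{\tau(i),v,A}$ of its value at $z$. On the other hand, the existence of a free rank-one submodule in $\Fil^0$ of the twist forces this Sen weight to match the Sen weight $h_{1,\rho_{A,v}} + \cdots + h_{i,\rho_{A,v}} = \eta_{1,v,A} + \cdots + \eta_{i,v,A}$ of the twisting character, up to the fixed integer defect $(h_{\tau(1),v} + \cdots + h_{\tau(i),v}) - (h_{1,v} + \cdots + h_{i,v})$ visible at $z$; this is the infinitesimal form of Kisin's analytic continuation theorem, as codified in the refined-families formalism of \cite[Chapter 4]{bc} (see also \cite{kis-omf}).

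Equating the two descriptions of the Sen weight of the deforming eigenline cancels the constant defect and yields
\begin{equation*}
\sum_{j \leq i} d(\eta_{\tau(j),v} - \eta_{j,v})(\xi) = 0 \qquad \text{for each } i = 1, 2, 3.
\end{equation*}
Subtracting the identity for $i-1$ from the identity for $i$ then gives $d(\eta_{i,v} - \eta_{\tau(i),v})(\xi) = 0$ for each $i$, which is the desired conclusion, valid for every $\xi \in T_{X,z}$ and every $v \in \Sigma_p$.

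The principal technical difficulty is in the second paragraph: making rigorous the extraction of a first-order equation from the $\Fil^0$ rank-one condition. This requires unpacking the refined deformation condition at the level of $(\varphi,\Gamma)$-modules over the Robba ring with coefficients in $A$, and the hypothesis of distinct crystalline eigenvalues is essential to guarantee that the deforming eigenline is unique and that its Sen weight varies as an analytic function. Once this infinitesimal analytic continuation is in hand, the remaining step is a short triangular manipulation of the equations indexed by $i$.
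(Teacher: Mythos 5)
Your first step --- transferring the tangent vector to the local refined deformation functor $\mf X_{\rho_{z,v},R_{z,v}}^{\per}$ via Proposition \ref{eqn:surjective-def-ring} and \cite[Lemma 7.5.12]{bc}, and then working one place and one index $i$ at a time --- is exactly the paper's reduction. But the second paragraph, which you yourself flag as ``the principal technical difficulty,'' is where the theorem actually lives, and what you write there is an assertion rather than an argument. A $\varphi$-eigenline inside $\Fil^0 D_{\cris}$ of a twist of $\wedge^i\rho_A$ does not ``carry a well-defined Sen weight function'': the line lives in $D_{\cris}$, its only numerical invariant is its (integer, hence rigid) jump in the Hodge filtration, and the deformation $\rho_A$ need not be Hodge--Tate, so there is no canonical way to attach one of the roots $h_{j,\rho_A}\in A$ of the Sen polynomial to a crystalline period. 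The claim that the $\Fil^0$ condition ``forces this Sen weight to match'' the weight of the twisting character up to a constant is precisely the content of the theorem; it is not an off-the-shelf ``infinitesimal form of Kisin's theorem'' available in \cite{kis-omf} or \cite[Ch.~4]{bc}, and citing it there is essentially circular. (A smaller non sequitur: distinctness of the crystalline eigenvalues of $\rho_{z,v}$ does not imply distinctness of the Sen weights of $\wedge^i\rho_{z,v}$; that comes from regularity of the Hodge--Tate weights.)

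What the paper does to close exactly this gap, for $i=1$: the refinement gives a rank-one sub-$(\varphi,\Gamma)$-module $\mc R_L(\delta_1)\subset D_{\rig}(\rho_{z,v})$ of Sen weight $h_{\tau_{z,v}(1)}$, and from a tangent vector $\widetilde\rho$ one builds a rank-four $(\varphi,\Gamma)$-module $\alpha(\widetilde\rho)$ over $\mc R_L$ (the kernel of $D_{\rig}(\widetilde\rho(h_{1,\widetilde\rho}))\twoheadrightarrow D_{z,v}'$). This module has the Sen weights $h_{\tau_{z,v}(1)}-h_1$ and $h_{\tau_{z,v}(1)}-h_1+d(\eta_{\tau_{z,v}(1)}-\eta_1)(\widetilde\rho)\varepsilon$, so the vanishing you want is equivalent to semisimplicity of the Sen operator on $D_{\Sen}(\alpha(\widetilde\rho))$, which would follow if $\alpha(\widetilde\rho)$ is Hodge--Tate. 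The paper then proves the stronger statement that $\alpha(\widetilde\rho)$ is crystalline by a dimension count: $D_{\cris}(\alpha(\widetilde\rho))$ contains the three-dimensional $D_{\cris}(\rho_{z,v}(h_{1,v}))$ and, by the refined period condition on $\widetilde\rho$ together with the distinctness of the crystalline eigenvalues, a two-dimensional generalized $\varphi$-eigenspace meeting the former in at most a line, whence $\dim_L D_{\cris}(\alpha(\widetilde\rho))\geq 4$. The cases $i=2,3$ are then deduced from $i=1$ using $\wedge^2\rho_{z,v}\simeq\rho_{z,v}^\vee(\det\rho_{z,v})$ and the determinant, rather than by your telescoping over $i$ (which is formally fine but rests entirely on the unproved middle step). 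Without some version of this $(\varphi,\Gamma)$-module construction, or an equivalent mechanism converting the $\Fil^0$ rank-one condition into a statement about the Sen operator of the deformation, your proof does not go through.
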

\begin{proof}
Fix a place $v \in \Sigma_p$ and consider the deformation rings $R_{z}^{\per}$ and $R_{\rho_{z,v},R_{z,v}}^{\per}$. We denote the Zariski tangent spaces by $T_z^{\per} = \mf X_{z}^{\per}(L[\varepsilon])$ and $T_{\rho_{z,v},R_{z,v}}^{\per} = \mf X_{\rho_{z,v},R_{z,v}}^{\per}(L[\varepsilon])$.  Each deformation $\rho_A \in \mf X_{\rho_{z,v}}^{\univ}(A)$ has Hodge-Tate-Sen weights $h_{i,\rho_A} \in A$ and we can study their infinitesimal deformations, which we also denote by $d\eta = (d\eta_{i}): T_{\rho_{z,v},R_{z,v}}^{\per} \rightarrow L^{\oplus 3}$. The notation is justified as \cite[Lemma 7.5.12]{bc} implies that we have a commutative diagram
\begin{equation*}
\xymatrix{
T_zX \ar[dr]_-{d\eta} \ar[r] & T_{z}^{\per} \ar[r] & T_{\rho_{z,v},R_{z,v}}^{\per} \ar[dl]^-{d\eta}\\
 & L^{\oplus 3}
}
\end{equation*}
Thus it is enough to show that $d(\eta_i - \eta_{\tau_{z,v}(i)}): T_{\rho_{z,v},R_{z,v}}^{\per} \rightarrow L^{\oplus 3}$ vanishes. We will handle the case of $i=1$ only. The case of $i=2$ follows from the same proof, either formally or by using the natural isomorphism $\wedge^2 \rho_{z,v} \simeq \rho_{z,v}^{\vee}(\det \rho_{z,v})$. The case of $i=3$ follows from the case of $i=1$ and $i=2$.

The rest of the proof will make use of the $(\varphi,\Gamma)$-module $D_{\rig}(\rho_{z,v})$ attached to $\rho_{z,v}$. A useful reference for this type of argument is \cite[Chapter 2]{bc}. Let $\mc R$ denote the Robba ring over $\mbf Q_p$ and $\mc R_L := \mc R \otimes_{\mbf Q_p} L$. We use $t$ to denote the usual element $t = \log(1+T)$ where $T$ is the variable on $\mc R$.

The refinement $R_{z,v} = (\phi_1,\phi_2,\phi_3)$ gives rise to a triangulation of $D_{\rig}(\rho_{z,v})$ and weight-type $\tau_{z,v}$. In particular there is an inclusion $\mc R_L(\delta_{1}) \subset D_{\rig}(\rho_{z,v})$ where $\delta_1: \mbf Q_p^\times \rightarrow L^\times$ is the character given by  $\delta_1 = z^{-h_{\tau_{z,v}(1)}}\nr(\phi_1)$. Its Hodge-Tate weight is $h_{\tau_{z,v}(1)}$. If $\tau_{z,v}(1) = 1$ then $\delta_1 = D_{\rig}(\psi_1)$, in the notation of \eqref{eqn:matrix-rep}, but otherwise $\delta_1$ is not \'etale.

The $L$-vector space $T_{\rho_{z,v},R_{z,v}}^{\per}$ is a subspace of (the canonically identified spaces) 
\begin{equation*}
\Ext^1_{G_{\mbf Q_p}}(\rho_{z,v},\rho_{z,v}) = T_{\rho_{z,v}}^{\univ} = \mf X_{\rho_{z,v}}^{\univ}(L[\varepsilon]).
\end{equation*}
Let $\widetilde \rho \in \mf X_{\rho_{z,v},R_{z,v}}^{\per}(L[\varepsilon])$. Denote by $D_{z,v}'$ the rank two $(\varphi,\Gamma)$-module 
\begin{equation*}
t^{-h_{1,v}}D_{\rig}(\rho_{z,v})/t^{-h_{1,v}}\mc R_L(\delta_1) \simeq D_{\rig}(\rho_{z,v}(h_{1,v}))/\mc R_L(z^{-h_{1,v}}\delta_1).
\end{equation*}
By functoriality we have a natural map 
\begin{equation*}
\alpha: \Ext^1_{G_{\mbf Q_p}}(\rho_{z,v},\rho_{z,v})\rightarrow \Ext^1_{(\varphi,\Gamma)}(\mc R_L(\delta_1),D_{\rig}(\rho_{z,v}))
\end{equation*}
 whose explicit definition is given by
\begin{equation}\label{eqn:alpha-defn}
\alpha(\widetilde{\rho}) = \ker(D_{\rig}(\widetilde{\rho}(h_{1,\widetilde \rho})) \twoheadrightarrow D_{\rig}(\rho_{z,v}(h_{1,\rho_{z,v}})) \twoheadrightarrow D_{z,v}').
\end{equation}
The image $\alpha(\tilde{\rho})$ is a rank four $(\varphi,\Gamma)$-module over $\mc R_L$. Notice that 
\begin{equation*}
h_{\tau_{v}(1),\widetilde\rho} - h_{1,\widetilde \rho} = h_{\tau_v(1),\rho_{z,v}} - h_{1,\rho_{z,v}} + d(\eta_{\tau_{z,v}(1)}-\eta_1)(\widetilde \rho)\varepsilon
\end{equation*}
is a Hodge-Tate-Sen weight of $\widetilde \rho(h_{1,\widetilde \rho})$. Further, \eqref{eqn:alpha-defn} implies that $\alpha(\widetilde \rho)$ has a Hodge-Tate weight $h_{\tau_v(1),\rho_{z,v}}-h_{1,\rho_{z,v}}$ of multiplicity two. Thus we have that $\alpha(\tilde \rho)$ is Hodge-Tate if and only if the Sen operator on $D_{\Sen}(\alpha(\tilde \rho))$ is semisimple, if and only if $d(\eta_{\tau_{z,v}(1)}-\eta_1)(\widetilde \rho) = 0$. We will show the stronger assertion that $\alpha(\widetilde \rho)$ is crystalline.

Note that by definition $\alpha(\tilde \rho)$ contains the  $(\varphi,\Gamma)$-submodule $D_{\rig}(\rho_{z,v}(h_{1,v}))$ and so, since $D_{\cris}(-)$ is left exact, $D_{\cris}(\alpha(\tilde \rho))$ contains a three-dimensional subspace $D_{\cris}(\rho_{z,v}(h_{1,v}))$. We can also consider the exact sequence
\begin{equation}\label{eqn:gen-space-seq}
0 \rightarrow D_{\cris}(\alpha(\widetilde{\rho}))^{(\varphi=p^{-h_{1,v}}\phi_{1})} \rightarrow D_{\cris}(\widetilde{\rho}(h_{1,\widetilde \rho}))^{(\varphi=p^{-h_{1,v}}\phi_{1})} \rightarrow D_{\cris}(D_{z,v}')^{(\varphi=p^{-h_{1,v}}\phi_{1})}
\end{equation}
 of $L$-vector spaces coming from \eqref{eqn:alpha-defn},
where $(-)^{(\varphi = p^{-h_{1,v}}\phi_{1})}$ is the generalized eigenspace (i.e. the subspace on which $\varphi - p^{-h_{1,v}}\phi_{1}$ acts nilpotently). Since the crystalline eigenvalues of $\rho_{z,v}$ are distinct, the far right space in \eqref{eqn:gen-space-seq} vanishes. Since $\tilde \rho \in \mf X_{\rho_{z,v},R_{z,v}}^{\per}(L[\varepsilon])$, the middle space is two-dimensional over $L$. Thus so is $ D_{\cris}(\alpha(\tilde \rho))^{(\varphi=p^{-h_{1,v}}\phi_{1})}$. However, this subspace can intersect $D_{\cris}(\rho_{z,v}) \subset D_{\cris}(\alpha(\widetilde \rho))$ in at most dimension one, again because $\varphi$ is semisimple on $D_{\cris}(\rho_{z,v})$. It follows that $\dim_L D_{\cris}(\alpha(\widetilde \rho)) \geq 4$ and thus $\alpha(\widetilde \rho)$ is crystalline as promised.
\end{proof}

\subsection{Companion points}\label{subsec:companion-pts}
In this section we define the notion of a companion point on the eigenvariety, generalizing \cite{ber2}. See \cite{br} for further information on overconvergent companion forms in higher dimensions.

\begin{lemm}\label{lemm:crys-alg-wts}
Fix a classical point $z = (\chi,\lambda) \in X_{\cl}$. Suppose that $x \in X(\overline{\mbf Q}_p)$ and $\lambda_x = \lambda_z$. Then $\rho_x \simeq \rho_z$, $\kappa(x) \in \mc W_{\Sigma_p}^{\alg}$ and there exists a $w = (w_v)_{v\in \Sigma _p} \in (S_3)_{\Sigma_p}$ such that $\eta(x)  = (\eta_{w_v(i),v}(z))_{i, v \in \Sigma_p}$.
\end{lemm}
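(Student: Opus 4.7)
The plan is to establish the three claims in sequence, with the algebraicity of $\kappa(x)$ being the main subtlety.

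For the isomorphism $\rho_x \simeq \rho_z$, the first step is to invoke the pseudocharacter formalism of \eqref{eqn:pseudocharacter-on-family}. The assumption $\lambda_x = \lambda_z$ says the specializations $t_x$ and $t_z$ of the global pseudocharacter $t: G_{F,\Sigma} \to \Gamma(X, \mc O_X^{\rig})$ agree on all unramified Frobenii of $G_{F,\Sigma}$ in the usual way. Since that set is Zariski dense in $G_{F,\Sigma}$, Chebotarev density forces $t_x = t_z$ everywhere. Both $\rho_x$ and $\rho_z$ are (by the convention from Section \ref{subsec:refined-family} and \cite[Theorem~1(2)]{tay}) the unique semisimple representations with the given traces, hence $\rho_x \simeq \rho_z$.

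The permutation claim is then essentially bookkeeping. Restricting the isomorphism to a decomposition group at $v \in \Sigma_p$ yields $\rho_{x,v} \simeq \rho_{z,v}$, so the two local representations have identical Hodge-Tate-Sen weights with multiplicity. The paragraph just after \eqref{eqn:hodge-tate-weight-map}, citing \cite[Lemma~7.5.12]{bc}, identifies $(\eta_{i,v}(y))_i$ with the tuple of Sen weights of $\rho_{y,v}$ for any $y \in X(\overline{\mbf Q}_p)$. Therefore
\[
\{\eta_{1,v}(x),\eta_{2,v}(x),\eta_{3,v}(x)\} = \{\eta_{1,v}(z),\eta_{2,v}(z),\eta_{3,v}(z)\} = \{h_{1,v},h_{2,v},h_{3,v}\}
\]
as multisets. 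Since the classical weights $h_{j,v}$ are pairwise distinct, there is a unique $w_v \in S_3$ with $\eta_{i,v}(x) = \eta_{w_v(i),v}(z)$, and setting $w = (w_v)_{v \in \Sigma_p}$ yields the formula in the lemma.

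The hardest step is to show $\kappa(x) \in \mc W_{\Sigma_p}^{\alg}$: a priori $\kappa(x)$ is only a continuous character of $(\mbf Z_p^\times)^{3}_{\Sigma_p}$ with integer derivative, which is not enough for algebraicity. My plan is to exploit the refined structure via Proposition \ref{prop:crystalline-interpolation}. For each $v \in \Sigma_p$ and $i \in \{1,2\}$, the proposition provides a non-zero vector in
\[
\Fil^0 D_{\cris}\bigl(\wedge^i \rho_{x,v}(\kappa_{1,v}(x)^{-1}\cdots\kappa_{i,v}(x)^{-1}z^{i-1})\bigr)^{\varphi = F_{1,v}(x)\cdots F_{i,v}(x)}.
\]
Because $\rho_{x,v} \simeq \rho_{z,v}$ is crystalline, so is each exterior power $\wedge^i \rho_{x,v}$, and the general principle that $D_{\cris}(V\otimes \chi) \neq 0$ for crystalline $V$ forces the twisting character $\chi$ to itself be crystalline. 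Extending $\kappa_{j,v}(x)^{-1}$ to a character of $\mbf Q_p^\times$ by the value $1$ on a uniformizer, that character must therefore be crystalline, hence of the form $z^n \nr(a)$. Its restriction to $\mbf Z_p^\times$ is the algebraic character $z^n$, so $\kappa_{1,v}(x)$ and $\kappa_{2,v}(x)$ are algebraic; the algebraicity of $\kappa_{3,v}(x)$ then follows either by the same argument applied to the determinant (or dually to $\wedge^2$) or by the $i=3$ case, which holds trivially since $\det \rho_{x,v}$ is crystalline. The main obstacle is this last step, where one must translate a $D_{\cris}$ non-vanishing statement into a structural conclusion about the weight character; the rest of the proof is a direct unraveling of the definitions.
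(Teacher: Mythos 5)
Your proposal is correct, and its overall architecture (Chebotarev for $\rho_x \simeq \rho_z$, the crystalline-period interpolation of Proposition \ref{prop:crystalline-interpolation} for algebraicity of $\kappa(x)$, then bookkeeping with distinct Hodge--Tate weights for the permutation $w$) coincides with the paper's. The one place where you genuinely diverge is the justification of the algebraicity step, which you rightly identify as the crux. The paper passes through $(\varphi,\Gamma)$-modules: it combines Proposition \ref{prop:crystalline-interpolation} with \cite[Proposition 4.3]{col} to conclude that $D_{\rig}(\rho_{x,v})$ contains the character $\kappa_{i,v}(x)$ up to a crystalline twist, and then uses crystallinity of $\rho_{x,v}$ to force $\kappa_{i,v}(x)$ to be algebraic; the authors even need a footnote to address the fact that \cite{col} is written only in dimension two. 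Your route avoids triangulations entirely: since $\wedge^i\rho_{x,v}$ is crystalline, $B_{\cris}\otimes \wedge^i\rho_{x,v}$ is a trivial semilinear $B_{\cris}[G_{\mbf Q_p}]$-module, so $D_{\cris}(\wedge^i\rho_{x,v}\otimes \eta) \simeq D_{\cris}(\eta)^{\oplus \binom{3}{i}}$ for any character $\eta$, and non-vanishing forces $\eta$ itself to be crystalline, hence algebraic on $\mbf Z_p^\times$; inducting on $i=1,2,3$ recovers the algebraicity of each $\kappa_{i,v}(x)$. This is more elementary, dimension-independent, and insensitive to the choice of extension of $\kappa_{i,v}(x)$ to $\mbf Q_p^\times$ (crystallinity of a character is detected on inertia), so it is a clean alternative to the paper's citation-heavy step; what it does not give you, and what the $(\varphi,\Gamma)$-module formulation is useful for elsewhere in the paper, is the finer information about which rank-one subobjects of $D_{\rig}(\rho_{x,v})$ realize these characters.
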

\begin{proof}
The isomorphism $\rho_x \simeq \rho_z$ follows immediately from the equality $\lambda_x = \lambda_z$ and the Cebotarev density theorem. Once we know that $\kappa(x)$ is algebraic, we know the unordered list of integers appearing as its components as they are the unordered list of Hodge-Tate weights of $z$. The existence of $w$ is then clear.

The assertion that $\kappa(x)$ is algebraic follows from the analytic continuation of crystalline periods together with basic properties of the $(\varphi,\Gamma)$-modules $D_{\rig}(\rho_{x,v})$ associated to $\rho_x$ at each place $v \in \Sigma_p$. Specifically, Proposition \ref{prop:crystalline-interpolation} and \cite[Proposition 4.3]{col} imply\footnote{Strictly speaking, the reference \cite{col} is only in dimension two, but the particular form of the proposition that we are using holds in any dimension. One could also deduce this over an affinoid neighbourhood of $x$ using \cite[Section 6]{kpx}.} that for any $v \in \Sigma_p$ and any $i =1,2,3$, $D_{\rig}(\rho_{x,v})$ contains the character $\kappa_{i,v}(x)$, up to a crystalline twist. Since $\rho_{x,v}$ is crystalline for each $v$, the same is true for $\kappa_{i,v}(x)$, and this shows that $\kappa(x)$ is algebraic.
\end{proof}

\begin{rema}
It is worth pointing out in terms of the weight $\kappa$ what the permutation $w$ is doing. If $\kappa(z) = k$ then $\kappa(x)$ is obtained via the dot action $\kappa(x)= w\cdot k$. This follows from tracing through the definition of $\eta$ in terms of $\kappa$. Compare with Proposition \ref{prop:jones-bgg}.
\end{rema}

Recall that the natural projection map $\mc T_{\Sigma_p} \rightarrow \mc W_{\Sigma_p}^{\alg}$ induces an isomorphism between algebraic characters. If $\kappa \in \mc W_{\Sigma_p}$ is algebraic let us use $\kappa_{T_{\Sigma_p}}$ to emphasize we are viewing it in the group $\mc T_{\Sigma_p}^{\alg}$.

\begin{defi}\label{defi:comp-pts}
Suppose that $z = (\chi,\lambda) \in X_{\cl}$ is a classical point. We say that a point $x \in X(\overline{\mbf Q}_p)$ is a companion point for $z$ if
\begin{enumerate}
\item $x \neq z$,
\item $\lambda_x = \lambda_z$, and
\item $\chi_x\otimes \kappa(x)_{T_{\Sigma_p}}^{-1} = \chi_z\otimes \kappa(z)_{T_{\Sigma_p}}^{-1}$
\end{enumerate}
(Note that condition (2) and Lemma \ref{lemm:crys-alg-wts} imply that condition (3) is well-defined.)
\end{defi}
As we will see, the existence of companion points is related to geometric properties of the weight map $\chi$ at $z$, and also to the nature of the refinement associated to the point $z$. We begin with a short lemma.

\begin{lemm}\label{lemm:num-cons-comp}
Suppose that $z \in X_{\cl}$ is a classical point with refinement $R_{z,v} = (\phi_{1,v},\phi_{2,v},\phi_{3,v})$ and that $x$ is a companion point. Then $\rho_x \simeq \rho_z$ and $p^{\eta_{i,v}(x)}F_{i,v}(x) = p^{\eta_{i,v}(z)}F_{i,v}(z) = \phi_{i,v}$. Moreover, the permutation $w$ from Lemma \ref{lemm:crys-alg-wts} is non-trivial.
\end{lemm}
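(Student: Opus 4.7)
The plan is to address the three conclusions separately; each will follow fairly directly from one of the three conditions in Definition \ref{defi:comp-pts}. The isomorphism $\rho_x \simeq \rho_z$ will be immediate from Lemma \ref{lemm:crys-alg-wts} applied to the hypothesis $\lambda_x = \lambda_z$ of condition (2).

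For the product equality, I would unwind condition (3) by evaluating the character equality at $p \in \mbf Q_p^\times$ coordinate-by-coordinate. Since $\kappa(x)$ and $\kappa(z)$ are algebraic by Lemma \ref{lemm:crys-alg-wts}, for each $i$ and $v$ one has integer weights $k_{i,v}(x)$ and $k_{i,v}(z)$, and tracing through the affine change of coordinates in \eqref{eqn:hodge-tate-weight-map} shows that $\eta_{i,v}(y) = -k_{i,v}(y) + (i-1)$ for $y \in \{x,z\}$. Condition (3) then reads $p^{-k_{i,v}(x)}\chi_{x,v,i}(p) = p^{-k_{i,v}(z)}\chi_{z,v,i}(p)$. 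Combining this with the definition $F_{i,v}(y) = p^{4-2i}\chi_{y,v,i}(p)$ from \eqref{eqn:F_i-defn}, a short calculation rewrites both sides of the target equation as $p^{3-i}$ times a common value, giving $p^{\eta_{i,v}(x)}F_{i,v}(x) = p^{\eta_{i,v}(z)}F_{i,v}(z)$. The final equality with $\phi_{i,v}$ was already recorded in Section \ref{subsec:refined-family}, just after the definition of the functions $F_{i,v}$ in \eqref{eqn:F_i-defn}.

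To show $w$ is non-trivial, I would argue by contradiction. If $w_v = 1$ for every $v$, then Lemma \ref{lemm:crys-alg-wts} gives $\eta(x) = \eta(z)$, which by the relation $\eta_{i,v}(y) = -k_{i,v}(y) + (i-1)$ forces $\kappa(x) = \kappa(z)$. Combining this with the product equality already proved yields $\chi_{x,v,i}(p) = \chi_{z,v,i}(p)$ for every $i$ and $v$. Since a continuous character of $\mbf Q_p^\times$ is determined by its restriction to $\mbf Z_p^\times$ together with its value at $p$, this would force $\chi_x = \chi_z$. Together with $\lambda_x = \lambda_z$, the injectivity of the map $X(\overline{\mbf Q}_p) \hookrightarrow \mc T_{\Sigma_p}(\overline{\mbf Q}_p) \times \Hom(\mc H(K^p)^{\nr}, \overline{\mbf Q}_p)$ from Section \ref{subsec:def-eigenvarieties} gives $x = z$, contradicting condition (1) of Definition \ref{defi:comp-pts}. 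The only real bookkeeping challenge is keeping straight the conventions relating algebraic weights on $\mc T_{\Sigma_p}^{\alg}$ to Sen weights via the affine shift $s$ in \eqref{eqn:hodge-tate-weight-map}; no deeper input beyond Lemma \ref{lemm:crys-alg-wts} and the definitions \eqref{eqn:F_i-defn} and \eqref{eqn:hodge-tate-weight-map} should be required.
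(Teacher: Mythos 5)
Your proposal is correct and follows exactly the route the paper takes: the paper's proof is the one-line observation that everything follows from Lemma \ref{lemm:crys-alg-wts}, condition (3) of Definition \ref{defi:comp-pts}, the definitions \eqref{eqn:hodge-tate-weight-map} and \eqref{eqn:F_i-defn}, and the fact that $x \neq z$ forces $w$ to be non-trivial. Your write-up simply spells out the same bookkeeping (including the computation $p^{\eta_{i,v}}F_{i,v} = p^{3-i}\theta_{i,v}(p) = \phi_{i,v}$ already recorded after \eqref{eqn:F_i-defn}), so there is nothing to add.
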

\begin{proof}
This follows immediately from Lemma \ref{lemm:crys-alg-wts} and the condition (3) of Definition \ref{defi:comp-pts}, along with the definitions \eqref{eqn:hodge-tate-weight-map} of $\eta$ and \eqref{eqn:F_i-defn} of the $F_i$. The non-triviality of $w$ follows since $x \neq z$ by definition.
\end{proof}

\begin{theo}\label{thm:comp-pt-theorem}
If $z \in X_{\cl}$ and $\rho_z$ is absolutely irreducible with distinct crystalline eigenvalues then the following are equivalent.
\begin{enumerate}
\item The point $z$ is critical.
\item The fiber of the weight map is non-trivial at $z$.
\item There exists a companion point for $z$.
\end{enumerate}
\end{theo}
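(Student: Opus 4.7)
The plan is to prove the three-way equivalence by the cyclic chain $(1) \Rightarrow (2) \Rightarrow (3) \Rightarrow (1)$, using in turn Theorem \ref{theo:consant-weights}, the BGG resolution of Proposition \ref{prop:jones-bgg} combined with Theorem \ref{thm:chenevier-classicality}, and the analytic continuation of crystalline periods from Proposition \ref{prop:crystalline-interpolation}.

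For $(1) \Rightarrow (2)$, if $z$ is critical there exist $v_0 \in \Sigma_p$ and an index $i$ with $\tau_{z,v_0}(i) \neq i$. Theorem \ref{theo:consant-weights} then produces a non-trivial linear relation $d\eta_{i,v_0} = d\eta_{\tau_{z,v_0}(i),v_0}$ on $T_zX$. Since \eqref{eqn:hodge-tate-weight-map} expresses $\eta_{\Sigma_p}$ as the composition of $\kappa$ with the formal isomorphism $\log$ and the affine shift $s$, this relation descends to a non-trivial element of $\ker\bigl(d\kappa : T_zX \to T_{\kappa(z)}\mc W_{\Sigma_p}\bigr)$. As $X$ and $\mc W_{\Sigma_p}$ are both equidimensional of dimension $3|\Sigma_p|$, the weight map fails to be \'etale at $z$; that is, the fiber of the weight map is non-trivial at $z$ in the sense of the proof of Theorem \ref{thm:chenevier-classicality}.

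For $(2) \Rightarrow (3)$, factor $\chi_z = \theta\delta_k$ with $\theta$ smooth and $k = \kappa(z)$ dominant. By Theorem \ref{thm:chenevier-classicality}, the hypothesis amounts to $j_k$ failing to be an isomorphism. The long exact sequence of Proposition \ref{prop:jones-bgg} then forces
\begin{equation*}
F(\mc C_{\theta\delta_{w\cdot k}}, K^p)^{(\lambda),\mc A_p} \neq 0
\end{equation*}
for some non-trivial $w \in (S_3)_{\Sigma_p}$. An Atkin-Lehner eigenvector in this space yields a point $x \in X(\overline{\mbf Q}_p)$ with $\lambda_x = \lambda_z$ and $\chi_x = \theta\delta_{w\cdot k}$. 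Since $\rho_z$ has distinct Hodge-Tate weights, the coordinates of $k + \rho_0$ are pairwise distinct, so $w\cdot k \neq k$, giving $\chi_x \neq \chi_z$ and hence $x \neq z$. All three conditions of Definition \ref{defi:comp-pts} are thus met, so $x$ is a companion point of $z$. The main subtlety lies here: one has to extract from the non-zero BGG term an honest closed point of $X$ with the prescribed $\chi_x$, which amounts to combining the $\mc A_p$-eigenvalue of the eigenvector with the restriction $\theta\delta_{w\cdot k}|_{T(\mc O_{F^+}\otimes\mbf Z_p)}$ to recover a full character of $T_{\Sigma_p}$, as built into the parametrization of $X_C(L)$ in Section \ref{subsec:explicit-construct}.

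For $(3) \Rightarrow (1)$, let $x$ be a companion point. By Lemma \ref{lemm:num-cons-comp} we have $\rho_x \simeq \rho_z$ with matching refinement $R_x = R_z$, and by Lemma \ref{lemm:crys-alg-wts} the weights satisfy $\eta(x) = (h_{w_v(i),v})_{i,v}$ for some non-trivial $w \in (S_3)_{\Sigma_p}$. Suppose for contradiction $z$ is non-critical, so that $\wt(\phi_{1,v},\ldots,\phi_{i,v}) = h_{1,v}+\cdots+h_{i,v}$ for all $i,v$. Proposition \ref{prop:crystalline-interpolation} applied at $x$ then forces the one-dimensional line $D_{\cris}(\wedge^i\rho_{z,v})^{\varphi=\phi_{1,v}\cdots\phi_{i,v}}$, whose Hodge-Tate weight is $h_{1,v}+\cdots+h_{i,v}$, to lie in $\Fil^{h_{w_v(1),v}+\cdots+h_{w_v(i),v}}$. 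Hence $h_{w_v(1),v}+\cdots+h_{w_v(i),v} \leq h_{1,v}+\cdots+h_{i,v}$ for every $i,v$, and a short induction on $i$ using the strict inequalities $h_{1,v} < h_{2,v} < h_{3,v}$ forces $w_v = \id$ for each $v$, contradicting Lemma \ref{lemm:num-cons-comp}.
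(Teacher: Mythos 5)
Your proof is correct and follows the paper's own argument step for step: $(1)\Rightarrow(2)$ via Theorem \ref{theo:consant-weights}, $(2)\Rightarrow(3)$ by combining the BGG sequence of Proposition \ref{prop:jones-bgg} with Theorem \ref{thm:chenevier-classicality}, and $(3)\Rightarrow(1)$ by specializing Proposition \ref{prop:crystalline-interpolation} at the companion point and using Lemmas \ref{lemm:crys-alg-wts} and \ref{lemm:num-cons-comp}. You simply spell out in more detail the points the paper leaves terse (the dimension count for non-\'etaleness, the extraction of an honest eigenvector from the generalized eigenspace, and the weight inequality forcing $w=\id$ in the non-critical case), all of which are sound.
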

\begin{proof}
The implication (1) implies (2) is given by Theorem \ref{theo:consant-weights}. The result that (2) implies (3) follows from combining Proposition \ref{prop:jones-bgg} and  Theorem \ref{thm:chenevier-classicality}. 

Finally we check (3) implies (1). So, suppose that $z$ has a companion point $x$. By Proposition \ref{prop:crystalline-interpolation} we have
\begin{equation*}
\Fil^{\eta_{1,v}(x)+\dotsb +\eta_{i,v}(x)}D_{\cris}(\wedge^i \rho_{x,v})^{\varphi = p^{\eta_{1,v}(x)}F_{1,v}(x)\dotsb p^{\eta_{i,v}(x)}F_{i,v}(x)} \neq 0.
\end{equation*}
Recall that the point $x$ and Lemma \ref{lemm:crys-alg-wts} defines the permutation $w = (w_v) \in (S_3)_{\Sigma_p}$ by $\eta_{i,v}(x) = \eta_{w(i),v}(z)$. By Lemma \ref{lemm:num-cons-comp}, $w$ is non-trivial and thus
\begin{equation*}
\Fil^{\eta_{\sigma_v(1),v}(z)+\dotsb +\eta_{\sigma_v(i),v}(z)}D_{\cris}(\wedge^i \rho_{z,v})^{\varphi = \phi_{1,v}\dotsb \phi_{i,v}} \neq 0.
\end{equation*}
Since $w_v$ is non-trivial for some $v$, the refinement $R_{z,v}$ is critical for some choice of $v$ and this shows (1).
\end{proof}

\subsection{Bad points}\label{subsec:bad-pts}

We now introduce bad points on the eigenvariety $X$. As the reader will note, the notion of badness could {\em a priori} be defined for points on eigenvarieties attached to a wide range of reductive groups $G$. To emphasize this, we have attempted to work as conceptually as possible while still maintaining the context of the rest of this paper.

Recall that we use German fraktur letters to denote Lie algebras corresponding to the groups we consider. For example, $\mf g_{\Sigma_p}$ it the Lie algebra associated to $G_{\Sigma_p}$. In particular, we will use $\mf t_{\Sigma_p}^{\ast}$ to denote the linear dual to the Lie algebra $\mf t_{\Sigma_p}$ (over a sufficiently large finite extension of $\mbf Q_p$ if needed). We use the notation $\alpha_1 = e_1 - e_2$ and $\alpha_2 = e_2 - e_3$ to denote the usual simple positive roots of $\mf{gl}_3$. The third positive root of $\mf{gl}_3$ is $\alpha_3 = \alpha_1 + \alpha_2$. Since $\mf g_{\Sigma_p} = (\mf{gl}_3)_{\Sigma_p}$, the roots of $\mf g_{\Sigma_p}$ are of the form $\alpha = (\alpha_v)_{v \in \Sigma_p}$ where $\alpha_v$ is a root of $\mf{gl}_3$ and $\alpha_v = 0$ except possibly at one place $v \in \Sigma_p$. A root $\alpha\neq 0$ is positive if and only if $\alpha_v > 0$ for the unique place $v \in \Sigma_p$ at which $\alpha_v \neq 0$.  Since the roots of $\mf g_{\Sigma_p}$ are all algebraic, we naturally confuse notation and speak about roots $\alpha \in \mf t_{\Sigma_p}^{\ast}$ and $\alpha \in \mc T_{\Sigma_p}$. If $\alpha$ is a root, denote by $\alpha^\vee$ the corresponding co-root. For each $\delta \in \mc T_{\Sigma_p}$, we then consider the locally analytic character $\langle \delta, \alpha^\vee \rangle:= \delta \circ \alpha^\vee: \mbf G_m^{\rig} \rightarrow \mbf G_m^{\rig}$. Let $\rho_0$ denote the half-sum of the positive roots.

\begin{defi}
A character $\delta \in \mc T_{\Sigma_p}$ is:
\begin{itemize}
\item $\alpha$-integral if $\langle \delta, \alpha^\vee\rangle$ is of the form $z \mapsto z^k$  for some $k \in \mbf Z$, in which case we use $\langle \delta, \alpha^\vee \rangle$ to also denote this integer, and
\item $\alpha$-dominant if $\delta$ is $\alpha$-integral and $\langle \delta + \rho_0, \alpha^\vee \rangle > 0$.
\end{itemize}
A character $\chi \in \mc T_{\Sigma_p}$ is locally $\alpha$-dominant if it is of the form $\chi = \theta \delta$ where $\theta$ is smooth and $\delta$ is $\alpha$-dominant.
\end{defi}

The Weyl group acts on weights $\mu \in \mf t_{\Sigma_p}^{\ast}$ by the formula
\begin{equation*}
w \cdot \mu = w(\mu + \rho_0) - \rho_0,
\end{equation*}
where the $w$ on the right hand side is the usual permutation action in the coordinates. This action preserves the $\mbf Z$-span of integral weights.

For each positive root $\alpha$ we have the associated reflection $s_\alpha \in (S_3)_{\Sigma_p}$ and the dot action of $s_\alpha$  on $\mf t_{\Sigma_p}^{\ast}$ extends to elements $\chi \in \mc T_{\Sigma_p}$ which are locally $\alpha$-integral in an apparent way. Indeed, when $\chi$ is locally $\alpha$-integral the weight $s_\alpha \cdot d\chi \in \mf t_{\Sigma_p}^{\ast}$ differs from $d\chi$ only by an algebraic weight. Thus there is a natural character $s_\alpha \cdot \chi \in \mc T_{\Sigma_p}$ such that $d(s_\alpha\cdot \chi) = s_\alpha \cdot \chi$. For example, if $\chi = \theta \delta$ with $\theta$ smooth and $\delta$ algebraic then $s_\alpha \cdot \chi = \theta \delta'$ with $\delta'$ algebraic as well. 

To give this example explicitly, if $\delta \in \mc T_{\Sigma_p}^{\alg}$ is of the form $\delta_{v} = z^{k_{1,v}}\otimes z^{k_{2,v}} \otimes z^{k_{3,v}}$ then
\begin{equation}\label{eqn:bad-pt-twist}
(s_\alpha \cdot \delta)_v = \begin{cases}
z^{k_1,v}\otimes z^{k_2,v} \otimes z^{k_3,v} & \text{if $\alpha_v = 0$}\\
z^{k_{2,v}-1} \otimes z^{k_{1,v}+1}\otimes z^{k_3,v} & \text{if $\alpha_v = \alpha_1$}\\
z^{k_{1,v}} \otimes z^{k_{3,v}-1}\otimes z^{k_{2,v}+1} & \text{if $\alpha_v = \alpha_2$}\\
z^{k_{3,v}-2} \otimes z^{k_{2,v}}\otimes z^{k_{1,v}+2} & \text{if $\alpha_v = \alpha_3$} 
\end{cases}
\end{equation}
This construction can be iterated to define a dot action of Weyl group elements $s_{\alpha_1}\dotsb s_{\alpha_r}$ on locally integral elements of $\mc T_{\Sigma_p}$. For the reader who is familiar with Verma modules (see \cite{hum}), the next definition will look familiar.

\begin{defi}\label{defi:strongly-linked}
Suppose that $\chi, \chi ' \in \mc T_{\Sigma_p}$. We write $\chi' \uparrow \chi$ if $\chi = \chi'$, or there exists a positive root $\alpha$ such that $\chi$ is locally $\alpha$-dominant and $s_\alpha \cdot \chi = \chi'$. We say that $\chi'$ is strongly linked to $\chi$ if there exists a sequence of positive roots ${\alpha_1},\cdots, {\alpha_r}$ such that
\begin{equation*}
\chi' = (s_{\alpha_1}\dotsb s_{\alpha_r})\cdot \chi \uparrow (s_{\alpha_2}\dotsb s_{\alpha_r})\cdot \chi \uparrow \dotsb \uparrow s_{\alpha_r}\cdot \chi \uparrow \chi.
\end{equation*}

\end{defi}
Note that being strongly linked is not an equivalence relation, but it is reflexive and transitive.

Let us explain Definition \ref{defi:strongly-linked} in the most interesting case where $\chi = \theta \delta_k$ and $\delta_k$ is a dominant weight. In that case, $s_\alpha \cdot \delta_k$ is defined for all positive roots $\alpha$. Moreover, if $k$ is regular then it is easy to see that $w\cdot \chi$ is defined for all elements $w \in S_3$ and that $w\cdot \chi$ is strongly linked to $\chi$ by a chain of length $\ell(w)$ equal to the length of $w$ in the Bruhat order on $S_3$.

We now return to the eigenvariety setting of the previous sections. Thus we have our eigenvariety $X_{K^p}$ of tame level $K^p$ parameterizing eigensystems $(\chi,\lambda) \in \mc T_{\Sigma_p} \times \mc H(K^p)^{\nr}$ acting on spaces of $p$-adic automorphic forms.

\begin{defi}\label{defi:bad-points-new}
Suppose that $z = (\chi,\lambda) \in X(\overline{\mbf Q}_p)$. We say that the point $z$ is bad if there exists a pair $(\chi',\lambda) \in X(\overline{\mbf Q}_p)$ such that $\chi' \neq \chi$ and $\chi'$ is strongly linked to $\chi$. If $z$ is bad, we denote by $w\cdot z := (w\cdot \chi, \lambda) \in X(\overline{\mbf Q}_p)$ (where $w\cdot \chi=\chi'$ is strongly linked to $\chi$) the corresponding companion point on $X(\overline{\mbf Q}_p)$.  
\end{defi}

The justification for the adoption of the term companion point from Section \ref{subsec:companion-pts} is given by the following result.

\begin{prop}\label{prop:bad-implies-critical}
Suppose that $z \in X_{\cl}$. Then $z$ is bad if and only if $z$ has a companion point. Specifically, if $w \cdot z \in X(\overline{\mbf Q}_p)$ for some $w \in (S_3)_{\Sigma _p}$ then $w \cdot z$ is a companion point in the sense of Definition \ref{defi:bad-points-new} if and only if it is a companion point in the sense of Definition \ref{defi:comp-pts}.
\end{prop}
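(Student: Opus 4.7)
The plan is to exploit the decomposition $\chi_z = \theta\delta_k$ available at a classical point, translate both notions of companion into the combinatorics of the dot action on $S_3^{\Sigma_p}$, and show they coincide. Throughout, write $\chi_z = \theta\delta_k$ with $\theta \in \mc T_{\Sigma_p}^{\sm}$ and $\delta_k \in \mc T_{\Sigma_p}^{\alg,\geq 0}$, so that $\kappa(z) = \delta_k$.

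The first preliminary step is to iterate the explicit formula \eqref{eqn:bad-pt-twist}, together with the observation from the text that the dot action on a character of the form $\theta \delta$ (with $\theta$ smooth and $\delta$ algebraic) fixes the smooth part, to conclude that for any $w \in (S_3)_{\Sigma_p}$ the character $w\cdot\chi_z$ equals $\theta\,\delta_{w\cdot k}$, where $w\cdot k$ is the usual dot action on algebraic weights. In particular $\kappa(w\cdot z) = \delta_{w\cdot k}$ whenever $w\cdot z \in X(\overline{\mbf Q}_p)$.

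The direction ``bad companion $\Rightarrow$ companion of Definition \ref{defi:comp-pts}'' is then immediate: given $w \cdot z = (w \cdot \chi_z, \lambda)$ with $w \cdot \chi_z \neq \chi_z$ and strongly linked to $\chi_z$, the previous paragraph gives
\[
\chi_{w\cdot z}\otimes \kappa(w\cdot z)^{-1} \;=\; \theta\delta_{w\cdot k}\otimes \delta_{w\cdot k}^{-1} \;=\; \theta \;=\; \chi_z \otimes \kappa(z)^{-1},
\]
which together with $\lambda_{w\cdot z} = \lambda$ and $w\cdot z \neq z$ verifies Definition \ref{defi:comp-pts}.

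For the converse, let $x$ be a companion of $z$ in the sense of Definition \ref{defi:comp-pts}. Lemma \ref{lemm:crys-alg-wts} together with the remark that follows it shows that $\kappa(x) = \delta_{w \cdot k}$ for some $w \in (S_3)_{\Sigma_p}$, and Lemma \ref{lemm:num-cons-comp} shows $w \neq 1$. Condition (3) of Definition \ref{defi:comp-pts} then forces $\chi_x = \theta\delta_{w\cdot k} = w\cdot \chi_z$. It remains to exhibit a strongly-linked chain from $\chi_x$ to $\chi_z$. Fix a reduced expression $w = s_{\alpha_1}\cdots s_{\alpha_r}$ and set $\chi_j = (s_{\alpha_j}\cdots s_{\alpha_r})\cdot \chi_z$, so that $\chi_1 = \chi_x$ and $\chi_{r+1} = \chi_z$. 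One needs to verify that each $\chi_{j+1}$ is locally $\alpha_j$-dominant, i.e.
\[
\langle \delta_k + \rho_0,\, s_{\alpha_r}\cdots s_{\alpha_{j+1}}(\alpha_j^\vee)\rangle \;>\; 0.
\]
Because the expression is reduced, the exchange condition in the Weyl group of $\mf{gl}_3$ guarantees that $s_{\alpha_r}\cdots s_{\alpha_{j+1}}(\alpha_j)$ is a positive root; and because $\rho_0 = (1,0,-1)$ is strictly dominant, $\delta_k + \rho_0$ pairs strictly positively with every positive coroot for any dominant $k$. Hence each step of the chain satisfies $\chi_j \uparrow \chi_{j+1}$, completing the proof.

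The only genuine subtlety is step four: matching the abstract, iterated dot action of Definition \ref{defi:strongly-linked} with the concrete $(S_3)_{\Sigma_p}$-action arising from Hodge--Tate weight permutations in Lemma \ref{lemm:crys-alg-wts}. Once the first preliminary step is in place, however, the identification is automatic, and the strong-linkage verification reduces to the elementary fact that $\delta_k + \rho_0$ is regular dominant for any weakly dominant classical weight $k$.
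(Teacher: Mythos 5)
Your proof is correct and follows the same route as the paper's (which is essentially a two-sentence sketch): one direction is a direct computation from the definitions once you note $w\cdot\chi_z = \theta\delta_{w\cdot k}$, and the converse combines Lemma \ref{lemm:crys-alg-wts} with the fact that the dominant weight $\delta_k$ makes every dot-translate strongly linked to $\chi_z$. Your write-up usefully fills in the strong-linkage verification (reduced expressions, positivity of the inversion roots, regularity of $\delta_k+\rho_0$) that the paper leaves implicit.
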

\begin{proof}
It is clear from the definitions that if $w \cdot z \in X_{K^p}(\overline{\mbf Q}_p)$ then it is a companion point in the sense of Definition \ref{defi:comp-pts}. Similarly, if $z$ has a companion point as in Definition \ref{defi:comp-pts} then using Lemma \ref{lemm:crys-alg-wts} and Lemma \ref{lemm:crys-alg-wts} it must be of the form $w \cdot z$ for a unique $w \in (S_3)_{\Sigma_p}$, and $w \cdot z$ is a companion point as in Definition \ref{defi:bad-points-new} because $z$ has dominant weight (so that the strongly linked property holds).
\end{proof}

\begin{coro}
If $z \in X_{\cl}$ such that $\rho_z$ is absolutely irreducible and has distinct crystalline eigenvalues at each place above $p$ then $z$ is bad if and only if $z$ is critical.
\end{coro}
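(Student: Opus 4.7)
The plan is to observe that this corollary is essentially a direct synthesis of two results established immediately beforehand, and the proof is just a matter of concatenating them. First I would invoke Proposition \ref{prop:bad-implies-critical}, which applies since $z \in X_{\cl}$: it asserts that $z$ is bad in the sense of Definition \ref{defi:bad-points-new} if and only if $z$ admits a companion point in the sense of Definition \ref{defi:comp-pts}. No additional hypothesis is needed at this stage.

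Next, I would invoke Theorem \ref{thm:comp-pt-theorem}, which requires precisely the two hypotheses in the corollary: $\rho_z$ is absolutely irreducible and, at each $v \mid p$, $\rho_{z,v}$ has distinct crystalline eigenvalues. That theorem establishes the equivalence between (a) $z$ being critical (in the sense of Definition \ref{defi:defi-critical}), (b) non-triviality of the fiber of the weight map $\kappa$ at $z$, and (c) the existence of a companion point for $z$ in the sense of Definition \ref{defi:comp-pts}. Combining clauses (a) and (c) gives us the desired equivalence between being critical and having a companion point.

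Chaining the two, $z$ is bad iff $z$ has a companion point (Proposition \ref{prop:bad-implies-critical}), iff $z$ is critical (Theorem \ref{thm:comp-pt-theorem}). There is no real obstacle: the work has already been carried out in Theorem \ref{theo:consant-weights} (for the deformation-theoretic input used in ``(1) $\Rightarrow$ (2)'' of Theorem \ref{thm:comp-pt-theorem}), in the BGG/classicality machinery of Proposition \ref{prop:jones-bgg} and Theorem \ref{thm:chenevier-classicality} (for ``(2) $\Rightarrow$ (3)''), and in the analytic continuation of crystalline periods via Proposition \ref{prop:crystalline-interpolation} (for ``(3) $\Rightarrow$ (1)''). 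Thus the corollary reduces to a one-line citation, and I would write it as such rather than reproducing any of those arguments.
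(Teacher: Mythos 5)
Your proposal is correct and matches the paper's proof exactly: the corollary is obtained by chaining Proposition \ref{prop:bad-implies-critical} (bad $\iff$ has a companion point, for classical $z$) with Theorem \ref{thm:comp-pt-theorem} (companion point exists $\iff$ critical, under the irreducibility and distinct-eigenvalue hypotheses).
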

\begin{proof}
This follows from the previous proposition and Theorem \ref{thm:comp-pt-theorem}.
\end{proof}

\subsection{Generic ordinary points on eigenvarieties}\label{subsec:generic-ordinary}
We now illustrate on the previous sections in the case of generic ordinary points on $X$. Recall that a classical point $z = (\chi,\lambda) \in X_{\cl}$ is associated to the choice of a classical automorphic representation $\pi$ of $G(\mbf A_{F^+})$ with tame level $K^p$, spherical above $p$, and the choice of a smooth character $\theta$ such that
\begin{equation*}
\pi_{\Sigma_p} \subset \Ind_{B_{\Sigma_p}}^{G_{\Sigma_p}} (\theta)^{\sm}.
\end{equation*}
The character $\chi$ is then defined as  $\chi = \theta \delta_k$ where $k = (k_v)_{v \in \Sigma_p}$ is the list of dominant weights of $\pi_\infty$.  Fix for the moment an automorphic representation $\pi$ such that $\pi$ has tame level $K^p$, $\pi_{\Sigma_p}$ is unramified, $\pi$ has weight $k = (k_v)_{v \in \Sigma_p}$.
\begin{defi}
We say that $\pi$ is generic ordinary if $\rho_\pi$ is irreducible and $\rho_{\pi,v}$ is generic ordinary for all $v \in \Sigma_p$.
\end{defi}
One could give an intrinsic definition for admissible irreducible representations of $G_{\Sigma_p}$ but we prefer this vantage. Suppose that $\pi$ is generic ordinary and write
\begin{equation}\label{eqn:ordinary-matrix}
\rho_{\pi,v} \sim \begin{pmatrix} \psi_{1,v} & * & * \\ & \psi_{2,v} & * \\ & & \psi_{3,v}\end{pmatrix}
\end{equation}
with the characters $\psi_{i,v}$ crystalline of Hodge-Tate weights $h_{1,v} < h_{2,v} < h_{3,v}$ for each $v \in \Sigma_p$. If we denote $\phi_{\psi_{i,v}}$ the crystalline eigenvalue of $\psi_{i,v}$ then 
\begin{equation}\label{eqn:the-psis}
\psi_{i,v} = z^{-h_{i,v}}\nr(\phi_{\psi_{i,v}}),
\end{equation}
seen as a character of $\mbf Q_p^\times$ using local class field theory. Now consider the smooth character of $\mbf Q_p^{\times}$ given by
\begin{equation*}
\theta_v^{\nc} = |\cdot|^2\nr(\phi_{\psi_{1,v}}) \otimes |\cdot|\nr(\phi_{\psi_{2,v}}) \otimes \nr(\phi_{\psi_{3,v}}).
\end{equation*}
The character $\theta^{\nc} = (\theta_v^{\nc})_{v\in \Sigma _p}$ is a refinement of $\pi_{\Sigma_p}$ and defines a point $z_{\nc} = (\theta^{\nc}\delta_k, \lambda)\in X_{\cl}$. 

Since $\rho_{\pi,v}$ is assumed generic ordinary, it is easy to see $\theta_v^{\nc}$ satisfies the condition \eqref{eqn:unramified-criterion} for all $v \in \Sigma_p$. Thus the smooth irreducible representation $\Ind_{B(F_v^+)}^{G(F_v^+)} (\theta_v^{\nc})^{\sm}$ is unramified and we can explicitly list the other refinements of $\pi_{\Sigma_p}$ as follows. If $\sigma = (\sigma_v)_v \in (S_3)_{\Sigma_p}$ then we denote by $\theta^{\nc,(\sigma)} = (\theta_v^{\nc,(\sigma_v)})_{v \in \Sigma _p}$ the $\Sigma_p$-tuple of characters given by \eqref{eqn:auto-regularity}. By the criterion \eqref{eqn:unramified-criterion} we have that $\theta^{\nc,(\sigma)}$ defines a refinement of $\pi_{\Sigma_p}$ and all the refinements are of this form. Thus each of the points $z^{(\sigma)}_{\nc} = (\theta^{\nc,(\sigma)}\delta_k,\lambda) \in X_{\cl}$ satisfy $\rho_{z^{(\sigma)}_{\nc}}= \rho_\pi$.

\begin{defi}
A point $z \in X_{\cl}$ is called generic ordinary if there exists a generic ordinary $\pi$ such that $z = z_{\nc}^{(\sigma)}$ for some $\sigma \in (S_3)_{\Sigma_p}$ as above.
\end{defi}

We could equivalently phrase the definition as follows. Suppose that $z \in X_{\cl}$ is a point such that $\rho_z$ is irreducible and $\rho_{z,v}$ is generic ordinary for all $v \in \Sigma_p$. We then write $\rho_{z,v}$ as in \eqref{eqn:ordinary-matrix} and this necessarily defines a point $z_{\nc}=(\theta^{\nc}\delta_k,\lambda) \in X_{\cl}$; the point $z$ is then one of the points $z_{\nc}^{(\sigma)} = (\theta^{\nc,(\sigma)}\delta_k,\lambda)$. The tuple $\sigma$ is well-defined.

It will be convenient for us to realize, in terms of the Galois characters $\psi_{i,v}$, we have
\begin{equation*}
\theta_v^{\nc, (\sigma_v)} = (\psi_{\sigma_v(1)}z^{h_{\sigma_v(1)}} \otimes \psi_{\sigma_v(2)}z^{h_{\sigma_v(2)}} \otimes \psi_{\sigma_v(3)}z^{h_{\sigma_v(3)}})(|\cdot|^2 \otimes |\cdot| \otimes 1).
\end{equation*}
Evaluating at $p$ we get that
\begin{equation*}
p^{2-i}\theta_{i,v}^{\nc, (\sigma_v)}(p) = \phi_{\psi_{\sigma_v(i),v}}.
\end{equation*}
Thus in the generic ordinary case, to give a refinement of $\pi_{\Sigma_p}$ is the same as to give a collection $R = (R_v)_{v\in \Sigma_p}$ of refinements of each $\rho_{\pi,v}$.

For generic ordinary points on $X_{\cl}$ we can sometimes give a complete classification of their companion points. We note the following convention. Each simple root $\alpha$ of $\mf g_{\Sigma_p}$ gives rise to a reflection $s_\alpha$ in the Weyl group. When viewed as an element of $(S_3)_{\Sigma_p}$, $s_\alpha$ is of the form $s_\alpha = \tau_{v_0}$ for a transposition $\tau \in S_3$ and $v_0 \in \Sigma_p$. Conversely, such transpositions $\tau$ give rise to a positive root $\alpha_\tau$ so that $\tau = s_{\alpha_\tau}$ when viewed inside $(S_3)_{\Sigma_p}$. Our convention is to prefer to use $s_{\alpha_\tau}$ rather than $\tau$ when consider the dot action of Weyl elements on points of the eigenvariety as in Definition \ref{defi:bad-points-new}, whereas we may use $\tau$ itself to act on weights.

\begin{prop}\label{prop:indec-produce-comp-pt}
Suppose that $z = (\theta \delta_k, \lambda) \in X_{\cl}$ is generic ordinary and indecomposable above $p$, but critical of type $\tau$ where $\tau_{v_0} \neq (1)_{v_0}$ for a \underline{unique} place $v_0$. Then the pair $(\theta \delta_{\tau\cdot k},\lambda) \in \mc T_{\Sigma_p}(\overline{\mbf Q}_p) \times \Hom(\mc H(K^p)^{\nr},\overline{\mbf Q}_p)$ defines a point $s_{\alpha_\tau} \cdot z$ on $X(\overline{\mbf Q}_p)$ and $s_{\alpha_\tau} \cdot z$ is the unique companion point for $z$. Moreover, $\alpha_\tau$ is a simple root.
\end{prop}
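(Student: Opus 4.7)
The plan is to verify three things: that $\alpha_\tau$ is a simple root, that some companion point exists at all, and that this companion point is uniquely the prescribed pair $(\theta \delta_{\tau \cdot k},\lambda)$. The simple-root claim will come from the classification in Lemma \ref{lemm:simple-trans-non-critical}, existence from Theorem \ref{thm:comp-pt-theorem}, and identification from the analytic continuation of crystalline periods in Proposition \ref{prop:crystalline-interpolation}.

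First, I would check that the hypotheses of Theorem \ref{thm:comp-pt-theorem} are in force: $\rho_z$ is absolutely irreducible because $z$ is generic ordinary, and at every $v \mid p$ the Frobenius eigenvalues $\phi_{\psi_{i,v}}$ are distinct because they have distinct $p$-adic valuations $h_{i,v}$. Since $\rho_{z,v_0}$ is indecomposable and the associated refinement of $\rho_{z,v_0}$ has non-trivial weight-type $\tau_{v_0}$, parts (2) and (3) of Lemma \ref{lemm:simple-trans-non-critical} force $\tau_{v_0} \in \{(12),(23)\}$, so $\alpha_\tau$ is indeed a simple root. Criticality of $z$ then activates Theorem \ref{thm:comp-pt-theorem} and supplies at least one companion point $x$ on the eigenvariety. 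To describe $x$, I would combine Lemma \ref{lemm:crys-alg-wts} and Lemma \ref{lemm:num-cons-comp}: there is a non-trivial $w = (w_v) \in (S_3)_{\Sigma_p}$ with $\eta_{i,v}(x) = h_{w_v(i),v}$, while $p^{\eta_{i,v}(x)}F_{i,v}(x) = \phi_{i,v}$, so the ordering of crystalline eigenvalues on $\rho_{x,v} \simeq \rho_{z,v}$ is unchanged from that at $z$.

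Substituting these equalities into Proposition \ref{prop:crystalline-interpolation} yields, for every $v \mid p$ and every $i$,
\begin{equation*}
\Fil^{h_{w_v(1),v} + \dotsb + h_{w_v(i),v}} D_{\cris}(\wedge^i \rho_{z,v})^{\varphi = \phi_{1,v}\dotsb \phi_{i,v}} \neq 0.
\end{equation*}
The Hodge jump of the eigenline $D_{\cris}(\wedge^i\rho_{z,v})^{\varphi=\phi_{1,v}\dotsb\phi_{i,v}}$ inside $D_{\cris}(\wedge^i\rho_{z,v})$ occurs, by the very definition of weight-type, at $h_{\tau_v(1),v}+\dotsb+h_{\tau_v(i),v}$, so the non-vanishing above amounts to $\sum_{j\leq i} h_{w_v(j),v} \leq \sum_{j \leq i} h_{\tau_v(j),v}$ for each $i$. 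At a place $v \neq v_0$ this collapses immediately to $w_v = 1$, and at $v_0$, exploiting the two possibilities $\tau_{v_0} \in \{(12),(23)\}$ together with the non-triviality of $w$, a short direct check pins down $w_{v_0} = \tau_{v_0}$. Hence $w = \tau$, and consequently $\chi_x = \chi_z \otimes \kappa(z)^{-1}\kappa(x) = \theta\delta_{\tau\cdot k}$; since the argument applies verbatim to any companion point of $z$, both existence and uniqueness drop out at once.

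The main obstacle will be that last step. Proposition \ref{prop:crystalline-interpolation} only yields an inequality of partial Hodge sums, and the passage from that inequality to the equality $w = \tau$ uses crucially that $\tau_{v_0}$ is a simple transposition and that $w$ is non-trivial. That forced simplicity is precisely where the indecomposability hypothesis feeds in via Lemma \ref{lemm:simple-trans-non-critical}; without it, strictly larger Weyl elements could in principle also satisfy the partial-sum inequality.
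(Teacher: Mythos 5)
Your argument is correct and follows essentially the same route as the paper's own proof: existence of a companion point via Theorem \ref{thm:comp-pt-theorem}, the shape $\kappa(x)=w\cdot k$ with $w$ non-trivial via Lemmas \ref{lemm:crys-alg-wts} and \ref{lemm:num-cons-comp}, and the identification $w=\tau$ by feeding Proposition \ref{prop:crystalline-interpolation} into the partial Hodge-sum inequalities --- your explicit check at $v_0$ is precisely the step the paper dismisses as ``easy to deduce.'' One small caution: when $\rho_{z,v_0}$ is totally indecomposable, parts (2) and (3) of Lemma \ref{lemm:simple-trans-non-critical} do not literally apply, and to see that the weight-type of a critical refinement of an indecomposable $\rho_{z,v_0}$ must lie in $\{(12),(23)\}$ one should appeal directly to Lemma \ref{lemm:indecomp-weights} (which excludes $\tau_{v_0}(1)=3$ and $\{\tau_{v_0}(1),\tau_{v_0}(2)\}=\{2,3\}$); the paper's own proof is equally terse on this point.
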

\begin{proof}
Since $z$ is critical, Theorem \ref{thm:comp-pt-theorem} implies that a companion point $x$ exists. We must show that {\em any} companion point $x$ of $z$ has weight $\tau \cdot k$. This is implicit in the proof of Theorem \ref{thm:comp-pt-theorem} but let us make it explicit. 

By Lemma \ref{lemm:crys-alg-wts} we know that the point $x$ has algebraic weight $\kappa(x) = w\cdot k$ for some non-trivial $w \in (S_3)_{\Sigma_p}$. By Proposition \ref{prop:crystalline-interpolation} we have that for each place $v \in \Sigma_p$,
\begin{equation}\label{eqn:special-case-produce-comp-pt}
\Fil^{\eta_{w_v(1),v}(z)+\dotsb +\eta_{w_v(i),v}(z)}D_{\cris}(\wedge^i \rho_{z,v})^{\varphi = \phi_{1,v}\dotsb \phi_{i,v}} \neq 0.
\end{equation}
If $v \neq v_0$ then the weight-type $\tau$ satisfies $\tau_v = (1)$, and thus \eqref{eqn:special-case-produce-comp-pt} implies that $w_v(i) = i$ for each $v \neq v_0$. At the place $v_0$, the indecomposability hypothesis on $\rho_{z,v_0}$ and Lemma \ref{lemm:simple-trans-non-critical} imply that $\tau_{v_0}$ is a transposition. It is now easy to deduce that $\tau_{v_0} = w_{v_0}$, since $w_{v_0}$ is necessarily non-trivial.
\end{proof}

We now show that Proposition \ref{prop:indec-produce-comp-pt} together with Lemma \ref{lemm:simple-trans-non-critical} produces a large supply of generic ordinary points with a complete description of their companion points. We will use such points to study the conjecture of Breuil and Herzig (\cite{bh}) in Section \ref{subsec:breuil-herzig}.

\begin{defi}\label{defi:simple}
An element $\sigma \in S_3$ is called simple if $\sigma$ is one of $(1), (12)$ or $(23)$. If $\Sigma_p' \subset \Sigma_p$ is any subset then $\sigma \in (S_3)_{\Sigma_p}$ is $\Sigma_p'$-simple if $\sigma_v$ is simple for all $v \in \Sigma_p'$. A generic ordinary point $z \in X_{\cl}$ is called $\Sigma_p'$-simple if $z = z_{\nc}^{(\sigma)}$ with $\sigma$ a $\Sigma_p'$-simple element of $(S_3)_{\Sigma_p}$.
\end{defi}

\begin{coro}\label{prop:non-critcal-gen-ordinary}
Suppose that $z$ is generic ordinary, write $z = z_{\nc}^{(\sigma)}$ and let 
\begin{equation*}
\Sigma_p' = \{v \in \Sigma_p : \rho_{z,v} \text{ is totally indecomposable}\}.
\end{equation*}
If $z$ is $\Sigma_p'$-simple then the following statements are true:
\begin{itemize}
\item If $\Sigma_p' = \Sigma_p$ then $z$ is non-critical.
\item If $\Sigma_p - \Sigma_p' = \{v_0\}$ and $\rho_{z,v_0}$ is indecomposable then either $z$ is non-critical or possesses a unique companion point $s_{\alpha_\tau} \cdot z$ where $\tau$ is the weight-type of $z$.
\end{itemize}
\end{coro}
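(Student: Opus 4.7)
The plan is to reduce the corollary to a placewise check via Lemma \ref{lemm:simple-trans-non-critical} and then, for the second bullet, to feed the output into Proposition \ref{prop:indec-produce-comp-pt}. Recall that the weight-type $\tau = (\tau_v)_{v \in \Sigma_p}$ of $z = z_{\nc}^{(\sigma)}$ is assembled place-by-place, with $\tau_v$ the weight-type of the local refinement $R_{z,v} = R_{\nc,v}^{(\sigma_v)}$ in the sense of Definition \ref{defi:weight-type}; and $z$ is non-critical precisely when $\tau_v = (1)$ for every $v$.

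For the first bullet I would argue as follows. When $\Sigma_p' = \Sigma_p$, every $\rho_{z,v}$ is totally indecomposable and, by the simplicity assumption, each $\sigma_v$ lies in $\{(1), (12), (23)\}$. Lemma \ref{lemm:simple-trans-non-critical}(1), combined with its initial assertion that $R_{\nc,v}$ is always non-critical, says that each of $R_{\nc,v}$, $R_{\nc,v}^{(12)}$, $R_{\nc,v}^{(23)}$ is non-critical. Hence $\tau_v = (1)$ at every place and $z$ itself is non-critical.

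For the second bullet the same argument gives $\tau_v = (1)$ at every $v \neq v_0$. At $v_0$ the representation $\rho_{z,v_0}$ is indecomposable but possibly not totally indecomposable, and $\sigma_{v_0} \in \{(1), (12), (23)\}$. If the local refinement determined by $\sigma_{v_0}$ is non-critical, then $\tau = (1)$ globally. Otherwise $\tau_{v_0} \neq (1)$; here the key bookkeeping is that the lists in parts (2) and (3) of Lemma \ref{lemm:simple-trans-non-critical} --- which identify exactly which $\sigma$ produce critical refinements and of which weight-type --- show that a simple $\sigma_{v_0}$ can yield a critical refinement only when $\sigma_{v_0} \in \{(12), (23)\}$, in which case $\tau_{v_0} = \sigma_{v_0}$ is itself a simple transposition. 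Thus $\tau$ differs from the identity only at $v_0$ and $\alpha_\tau$ is a simple positive root. Proposition \ref{prop:indec-produce-comp-pt} then applies (its hypotheses of generic ordinary, indecomposable at every $v \mid p$, and critical at a unique place are all now in place) and outputs the unique companion point $s_{\alpha_\tau} \cdot z = (\theta \delta_{\tau \cdot k}, \lambda)$.

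The main obstacle has already been absorbed into the cited results: once Lemma \ref{lemm:simple-trans-non-critical} and Proposition \ref{prop:indec-produce-comp-pt} are available, the corollary is a combinatorial repackaging. The single point worth emphasizing is the verification at $v_0$ that the critical weight-type, if any, is a simple transposition rather than a $3$-cycle, since this is what ensures $\alpha_\tau$ is a simple root and thus that the conclusion matches the statement of Proposition \ref{prop:indec-produce-comp-pt} cleanly.
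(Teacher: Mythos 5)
Your proof follows exactly the paper's route: the first bullet is the placewise application of Lemma \ref{lemm:simple-trans-non-critical}, and the second bullet is the verification that Proposition \ref{prop:indec-produce-comp-pt} applies. One slip to correct: in the second bullet you assert that $\sigma_{v_0} \in \{(1),(12),(23)\}$, but $v_0 \notin \Sigma_p'$, so the $\Sigma_p'$-simplicity hypothesis places \emph{no} constraint on $\sigma_{v_0}$ --- it may be $(123)$, $(132)$ or $(13)$. Your case analysis at $v_0$ therefore omits these possibilities; for instance $\sigma_{v_0}=(123)$ can give a critical refinement of weight-type $(12)$ by Lemma \ref{lemm:simple-trans-non-critical}(2). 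The conclusion is unharmed, because parts (2) and (3) of that lemma show that for \emph{every} $\sigma_{v_0}$ the weight-type of a critical refinement is a simple transposition, and in any case the assertion that $\alpha_\tau$ is a simple root is already part of the conclusion of Proposition \ref{prop:indec-produce-comp-pt} rather than a hypothesis you must supply. So the argument goes through once you drop the unwarranted restriction on $\sigma_{v_0}$ and either invoke the lemma for all six permutations or simply rely on the proposition's own ``moreover'' clause.
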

\begin{proof}
The first bullet point follows from Lemma \ref{lemm:simple-trans-non-critical} and the assumption that $\rho_{z,v}$ is totally indecomposable for all $v \in \Sigma_p$, along with the assumption that $z$ is $\Sigma_p$-simple. The second bullet point is the situation of Proposition \ref{prop:indec-produce-comp-pt}.
\end{proof}

It will be useful to adapt the previous corollary's conditions into a definition.

\begin{defi}\label{defn:dagger}
Let $z \in X_{\cl}$ and define $\Sigma_p' = \{v \in \Sigma_p : \rho_{z,v} \text{ is totally indecomposable}\}$. We say that a point $z$ satisfies $(\dagger)$ if
\begin{itemize}
\item $z$ is generic ordinary and $\rho_{z,v}$ is indecomposable for all $v \in \Sigma_p$.
\item $\left|\Sigma_p - \Sigma_p'\right| \leq 1$, i.e. $\rho_{z,v}$ is totally indecomposable for all but at most one place.
\item $z$ is $\Sigma_p'$-simple.
\item If $z$ is non-critical then $z$ is $\Sigma_p$-simple.
\end{itemize}
\end{defi}

As an illustration of the definition we will prove the following elementary lemma (which will be used in the proof of Proposition \ref{prop:relations-each-place}).

\begin{lemm}\label{lemm:closed-under-bruhat}
Suppose that $z_{\nc}^{(\sigma)}$ satisfies $(\dagger)$ and let $\tau$ be its weight-type. Then, if $\tau \leq \sigma' < \sigma$ with $\ell(\sigma') + 1 = \ell(\sigma)$ then $\sigma'_v = \sigma_v$ for all but one place $v_1$ at which $\sigma'_{v_1} = \tau_{v_1}$. Moreover, the point $z_{\nc}^{(\sigma')}$ satisfies $(\dagger)$ and has weight-type $\tau$ as well.
\end{lemm}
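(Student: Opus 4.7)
My plan proceeds in three steps matching the three assertions, using as background the fact that Bruhat order on the product Weyl group $(S_3)_{\Sigma_p}$ is componentwise and lengths add: $\ell((w_v)_v) = \sum_v \ell(w_v)$. Hence the hypothesis $\sigma' < \sigma$ with $\ell(\sigma') + 1 = \ell(\sigma)$ forces $\sigma'_v = \sigma_v$ off a unique place $v_1$, at which $\tau_{v_1} \leq \sigma'_{v_1} < \sigma_{v_1}$ and $\ell(\sigma'_{v_1}) + 1 = \ell(\sigma_{v_1})$. This already gives the first assertion.

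To identify $\sigma'_{v_1}$ with $\tau_{v_1}$, I would split cases on whether $v_1 \in \Sigma_p'$ or $v_1$ equals the (at most one) exceptional place $v_0 \in \Sigma_p \setminus \Sigma_p'$. If $v_1 \in \Sigma_p'$, the $\Sigma_p'$-simplicity in $(\dagger)$ gives $\sigma_{v_1} \in \{1,(12),(23)\}$, and since $\sigma_{v_1}$ has positive length, $\sigma'_{v_1} = 1 = \tau_{v_1}$, the last equality using Lemma \ref{lemm:simple-trans-non-critical}(1). If $v_1 = v_0$ and $z_{\nc}^{(\sigma)}$ is non-critical, the same argument applies because $(\dagger)$ then upgrades to full $\Sigma_p$-simplicity. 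If $v_1 = v_0$ and $z_{\nc}^{(\sigma)}$ is critical, then Lemma \ref{lemm:simple-trans-non-critical}(2)--(3) forces $\tau_{v_0} \in \{(12),(23)\}$ and constrains $\sigma_{v_0}$ to lie in $\{(12),(123)\}$ or $\{(23),(132)\}$ according to the weight-type; inspection of the Bruhat order on $S_3$, combined with $\tau_{v_0} \leq \sigma'_{v_0} < \sigma_{v_0}$ and the length drop, rules out $\sigma_{v_0}$ of length one and forces $\sigma'_{v_0} = \tau_{v_0}$.

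For the last assertion, both $z_{\nc}^{(\sigma)}$ and $z_{\nc}^{(\sigma')}$ correspond to the same Hecke eigensystem $\lambda$ and hence to the same global Galois representation $\rho_\pi$; so the generic ordinary hypothesis, indecomposability at each $v$, and the set $\Sigma_p'$ all transfer, yielding the first two bullets of $(\dagger)$ for $\sigma'$. The $\Sigma_p'$-simplicity of $\sigma'$ is clear from step two. Since the weight-type is determined place-by-place from Definition \ref{defi:defi-critical}, it matches $\tau$ away from $v_1$ trivially, and at $v_1$ via $\sigma'_{v_1} = \tau_{v_1}$ together with the tabulation in Lemma \ref{lemm:simple-trans-non-critical}. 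Finally, if $z_{\nc}^{(\sigma')}$ is non-critical then $\tau$ is trivial, so $z_{\nc}^{(\sigma)}$ is too; $(\dagger)$ applied to $z_{\nc}^{(\sigma)}$ then yields $\Sigma_p$-simplicity of $\sigma$, which transfers to $\sigma'$ since we only ever replace a length-$1$ simple element by $1$.

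The main obstacle is the critical subcase in step two: one must combine the concrete list of critical refinements from Lemma \ref{lemm:simple-trans-non-critical}(2)--(3) with the Bruhat lower bound $\tau_{v_0} \leq \sigma'_{v_0}$ to rule out the length-$1$ element of $S_3 \setminus \{\tau_{v_0}\}$ and pin down $\sigma_{v_0}$ as the unique length-$2$ candidate. Everything else is bookkeeping in the product Weyl group.
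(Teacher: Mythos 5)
Your proof is correct and takes essentially the same route as the paper's: reduce to a single place $v_1$ by additivity of length, use $(\dagger)$ together with the classification in Lemma \ref{lemm:simple-trans-non-critical} to see that $\ell(\sigma_{v_1})-\ell(\tau_{v_1})\leq 1$ (your case analysis on $v_1\in\Sigma_p'$ versus $v_1=v_0$, critical or not, is just a more explicit version of this), and then conclude $\sigma'_{v_1}=\tau_{v_1}$ from the Bruhat inequalities. The transfer of $(\dagger)$ and of the weight-type to $z_{\nc}^{(\sigma')}$ is handled as in the paper, which leaves that part to the reader; your spelled-out verification is accurate.
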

\begin{proof}
Since $\ell(\sigma') + 1 = \ell(\sigma)$ we know immediately that $\sigma'_v = \sigma_v$ for all but one place $v_1$ at which $\sigma'_{v_1} < \sigma_{v_1}$. As $\tau \leq \sigma' < \sigma$ we have $\tau_{v_1} \leq \sigma_{v_1}' < \sigma_{v_1}$. Since $z_{\nc}^{(\sigma)}$ satisfies $(\dagger)$ we have that $\ell(\sigma_{v_1}) - \ell(\tau_{v_1}) \leq 1$. Indeed, if $\tau_{v_1} = (1)$ then $\sigma_{v_1}$ must be a simple transposition and if $\tau_{v_1} \neq (1)$ then it follows from the classification Lemma \ref{lemm:simple-trans-non-critical}. Thus it follows that $\sigma_{v_1}' = \tau_{v_1}$. The second half of the lemma follows from the first half and the definition of $(\dagger)$.
\end{proof}

Note that by Corollary \ref{prop:non-critcal-gen-ordinary} a point $z$ satisfying $(\dagger)$ is either non-critical or possesses a unique companion point (this is the critical case). Moreover, in the latter case we have a unique companion point given by $s_{\alpha_\tau} \cdot z$ where $\tau$ is the weight-type of $z$. Note that in the case $z$ is non-critical, the weight-type is $\tau = (1)$.

\begin{prop}\label{prop:good-points}
If $z$ has weight-type $\tau$ and satisfies $(\dagger)$ then $s_{\alpha_\tau} \cdot z$ is not bad.
\end{prop}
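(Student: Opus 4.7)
The plan is to argue by contradiction, combining Proposition~\ref{prop:crystalline-interpolation}, the classification of critical refinements in Lemma~\ref{lemm:simple-trans-non-critical}, and the directional nature of strong linkage. Assume $y := s_{\alpha_\tau} \cdot z$ is bad, so there exists a point $y' = (\chi', \lambda_z) \in X(\overline{\mbf Q}_p)$ with $\chi' \neq \chi_y$ and $\chi'$ strongly linked to $\chi_y$. I will show this forces $y'$ to lie in $\{y, z\}$ and then rule out both possibilities. In the non-critical case ($\tau = 1$) we interpret $s_{\alpha_\tau} \cdot z = z$, and the same argument shows $z$ itself is not bad.

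First I will reduce the hypothetical bad partner $y'$ to a Weyl translate of $z$. Since the operation $s_\alpha \cdot -$ preserves the smooth part of a character, writing $\chi_z = \theta\delta_k$ forces $\chi_y = \theta\delta_{\tau\cdot k}$ and $\chi' = \theta\delta_{k'}$ for some algebraic weight $k'$. By Lemma~\ref{lemm:crys-alg-wts} applied at $y'$, $\rho_{y'} \simeq \rho_z$ and $\kappa(y') = w \cdot k$ for a unique $w \in (S_3)_{\Sigma_p}$, so $k' = w \cdot k$. A short computation using $h_{j,v} = -k_{j,v} + j - 1$ shows $p^{\eta_{i,v}(y')}F_{i,v}(y') = p^{3-i}\theta_{i,v}(p) = \phi_{i,v}$ for each $i, v$, where $R_{z,v} = (\phi_{1,v},\phi_{2,v},\phi_{3,v})$ is the refinement from~\eqref{eqn:refine-on-family}. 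Proposition~\ref{prop:crystalline-interpolation} applied at $y'$ then yields, for each $v$ and each $i$,
\begin{equation*}
h_{w_v(1),v} + \dotsb + h_{w_v(i),v} \leq \wt(\phi_{1,v},\dotsc,\phi_{i,v}) = h_{\tau_v(1),v} + \dotsb + h_{\tau_v(i),v},
\end{equation*}
where the equality uses that $R_{z,v}$ has weight-type $\tau_v$.

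By $(\dagger)$ combined with Proposition~\ref{prop:indec-produce-comp-pt}, $\tau_v \in \{(1), (12), (23)\}$ at every $v$ and $\tau_v = (1)$ for every $v$ except possibly the unique place $v_0 \notin \Sigma_p'$. Expanding the inequalities above in each of these three cases and using the regularity $h_{1,v} < h_{2,v} < h_{3,v}$ of the Hodge--Tate weights forces $w_v \in \{(1), \tau_v\}$ at each $v$, and hence $w \in \{(1), \tau\}$. If $w = \tau$ then $\chi' = \chi_y$, i.e. $y' = y$, contradicting $\chi' \neq \chi_y$.

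The remaining case $w = (1)$, which gives $y' = z$, is handled by the directional nature of strong linkage: $\chi_z$ strongly linked to $\chi_y$ would imply $k \leq \tau \cdot k$ in the partial order on $\mf t_{\Sigma_p}^{\ast}$ where $\mu \leq \lambda$ means $\lambda - \mu$ is a non-negative sum of positive roots. However an explicit calculation at $v_0$ gives
\begin{equation*}
k - \tau \cdot k = (k_{v_0,1} - k_{v_0,2} + 1)\alpha_1 \quad \text{or} \quad (k_{v_0,2} - k_{v_0,3} + 1)\alpha_2
\end{equation*}
(and zero at the other places) according to whether $\tau_{v_0} = (12)$ or $(23)$, which is a strictly positive multiple of a positive root. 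Thus $k > \tau \cdot k$ and the required linkage fails in the critical case. In the non-critical case ($\tau = (1)$) we have $y = z$, so $y' = z = y$, again contradicting $\chi' \neq \chi_y$. The main technical obstacle is the step establishing $w \in \{(1), \tau\}$: this is where the indecomposability hypothesis (via Lemma~\ref{lemm:simple-trans-non-critical}) and the simplicity constraints in $(\dagger)$ combine to rule out the more exotic Weyl translates allowed by $\kappa(y')$ alone.
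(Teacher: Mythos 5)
Your proof is correct, but it takes a more self-contained route than the paper's. The paper disposes of the statement in three lines: in the critical case, if $s_{\alpha_\tau}\cdot z$ were bad, transitivity of strong linkage would produce a second companion point for $z$, contradicting the uniqueness established in Proposition \ref{prop:indec-produce-comp-pt} and Corollary \ref{prop:non-critcal-gen-ordinary}; the non-critical case is Proposition \ref{prop:bad-implies-critical}. You instead inline the content of Proposition \ref{prop:indec-produce-comp-pt}: you rerun the crystalline-period argument (Lemma \ref{lemm:crys-alg-wts} plus Proposition \ref{prop:crystalline-interpolation}) directly at the hypothetical bad partner $y'$ of $s_{\alpha_\tau}\cdot z$ to pin down $w_v\in\{(1),\tau_v\}$, and your case analysis there ($\tau_v$ simple, regularity of the Hodge--Tate weights) is the same computation the paper performs when classifying companion points of $(\dagger)$-points. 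What your version buys is an explicit treatment of the possibility $\chi'=\chi_z$, which you exclude via the directionality of strong linkage ($k>\tau\cdot k$); the paper's phrasing "$z$ has a second companion point $(w's_{\alpha_\tau})\cdot z$ with $w'\neq 1$" silently assumes the linked character is not $\chi_z$ itself (a point $(\chi_z,\lambda_z)=z$ is not a companion point of $z$), so your argument actually closes a small gap the published proof elides. The cost is length: you re-prove uniqueness of the companion point rather than citing it, and your appeal to "$s_\alpha\cdot{-}$ preserves the smooth part" to get $\chi'=\theta\delta_{k'}$ is fine but could equally have been extracted from Lemma \ref{lemm:crys-alg-wts} alone. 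Both arguments rest on the same ultimate input, namely the interpolation of crystalline periods.
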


\begin{proof}
If $z$ is non-critical then $s_{\alpha_\tau}\cdot z=z$ is not bad by Proposition \ref{prop:bad-implies-critical}. Now consider $x := s_{\alpha_\tau}\cdot z$ in the case that $z$ is critical of weight-type $\tau$. Since being strongly linked is a transitive property, we see immediately that if $x$ is bad, then $z$ has a second companion point of the form $w\cdot x = (w's_{\alpha_\tau})\cdot z$ with $w' \neq 1$. But this contradicts the uniqueness of the companion point associated to $z$ by Corollary \ref{prop:non-critcal-gen-ordinary}.
\end{proof}

We highlight separately one particular case, which could have been deduced immediately following Proposition \ref{prop:bad-implies-critical}.

\begin{coro}\label{coro:simple-pts-not-bad}
If $z \in X_{\cl}$ is $\Sigma_p$-simple, generic ordinary point and $\rho_{z,v}$ is totally indecomposable for all $v \in \Sigma_p$ then $z$ is not bad.
\end{coro}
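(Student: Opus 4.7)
The plan is to deduce this from the classification of bad classical points as critical points, combined with the first bullet of Corollary \ref{prop:non-critcal-gen-ordinary}. More concretely, here is the chain of implications I would follow.

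First, I would note that the hypothesis that $z$ is generic ordinary means (by definition) that $\rho_z$ is absolutely irreducible and that each $\rho_{z,v}$ is generic ordinary; in particular, the condition $p\phi_{\psi_{i,v}} \neq \phi_{\psi_{i+1,v}}$ built into the definition of generic ordinary guarantees that $\rho_{z,v}$ has distinct crystalline eigenvalues at every place $v \mid p$. Thus the hypotheses of the unnamed corollary immediately following Proposition \ref{prop:bad-implies-critical} are satisfied, and to show $z$ is not bad it suffices to show $z$ is non-critical.

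Next, since $z$ is $\Sigma_p$-simple and $\rho_{z,v}$ is totally indecomposable for every $v \in \Sigma_p$, I would invoke the first bullet of Corollary \ref{prop:non-critcal-gen-ordinary} directly: this is exactly the case $\Sigma_p' = \Sigma_p$ in the notation there, and the conclusion is that $z$ is non-critical. (Under the hood, this is Lemma \ref{lemm:simple-trans-non-critical}(1), which says that when $\rho_{z,v}$ is totally indecomposable the refinements $R_{\nc}$, $R_{\nc}^{(12)}$ and $R_{\nc}^{(23)}$ are all non-critical, together with the assumption that each $\sigma_v$ is simple.)

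Combining the two steps, $z$ is non-critical, so by the corollary after Proposition \ref{prop:bad-implies-critical} the point $z$ is not bad, as required. There is no real obstacle here: the statement is essentially a repackaging of material already established, and the proof is simply a citation of the first bullet of Corollary \ref{prop:non-critcal-gen-ordinary} together with the bad $\iff$ critical equivalence for classical points with irreducible $\rho_z$ and distinct crystalline eigenvalues.
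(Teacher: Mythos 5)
Your proof is correct and follows the paper's own argument exactly: non-criticality via the first bullet of Corollary \ref{prop:non-critcal-gen-ordinary}, then the bad $\iff$ critical equivalence coming from Proposition \ref{prop:bad-implies-critical} and Theorem \ref{thm:comp-pt-theorem}. (One minor remark: the distinctness of the crystalline eigenvalues follows from regularity, since $v_p(\phi_{\psi_{i,v}}) = h_{i,v}$ are distinct, rather than from the genericity condition $p\phi_{\psi_{i,v}} \neq \phi_{\psi_{i+1,v}}$; this does not affect the argument.)
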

\begin{proof}
The hypotheses on $z$ imply that $z$ is non-critical (see Corollary \ref{prop:non-critcal-gen-ordinary}) and thus not bad by Proposition \ref{prop:bad-implies-critical}.
\end{proof}

\section{An adjunction formula}\label{sec:adjunction}

Our goal in this section is to produce an adjunction formula between certain locally analytic principal series and completed cohomology. The analog for $\GL_2(\mbf Q_p)$ is given by \cite[Theorem 5.5.1]{be}. It is crucial, as in the work of Breuil and Emerton, that our result be applicable beyond the classical situation. We now state the adjunction formula from Theorem C.

\begin{theo}\label{theo:analytic-maps}
Let $z=(\chi,\lambda) \in X(L)$ be not bad. Then there exists an isomorphism
\begin{equation*}
\Hom_{G_{\Sigma _p}}((\Ind_{B _{\Sigma _p} ^{-}} ^{G _{\Sigma _p}} \chi \delta _{B_{\Sigma _p}} ^{-1})^{\an}, \widehat H^{0}(K^p)^{\lambda} _{L, \an}) \simeq \widehat H^{0}(K^p)_{L, \an} ^{N_{\Sigma _{p}} ^0,T _{\Sigma _p}^+=\chi,\lambda} \simeq J_{B_{\Sigma _p}}^{\chi}(\widehat H^{0}(K^p)^{\lambda}_{L,\an})
\end{equation*}
where we have denoted by $(\Ind_{B _{\Sigma _p} ^{-}} ^{G _{\Sigma _p}} \chi \delta _{B_{\Sigma _p}} ^{-1})^{\an}$ the locally analytic induction of $\chi \delta _{B_{\Sigma _p}} ^{-1}$ seen as a $B_{\Sigma_p} ^{-}$-representation.
\end{theo}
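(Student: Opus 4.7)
The second isomorphism in the statement is Emerton's identification (\cite[Proposition~3.4.9]{em2}) of the Jacquet eigenspace with the $N^0_{\Sigma_p}$-invariants on which $T^+_{\Sigma_p}$ acts via $\chi$, transported through the Hecke action to the $\lambda$-eigenspace. The substantive content is therefore the first isomorphism.

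My plan is as follows. Emerton's construction of the Jacquet functor in \cite{em2,em3} supplies a canonical natural transformation
\begin{equation*}
\iota\colon \Hom_{G_{\Sigma_p}}\bigl((\Ind_{B^{-}_{\Sigma_p}}^{G_{\Sigma_p}}\chi\delta_{B_{\Sigma_p}}^{-1})^{\an},\,\widehat H^0(K^p)_{L,\an}^\lambda\bigr)\longrightarrow J_{B_{\Sigma_p}}^\chi\bigl(\widehat H^0(K^p)_{L,\an}^\lambda\bigr),
\end{equation*}
realised by evaluating a morphism on a canonical ``$B_{\Sigma_p}^{-}$-highest weight'' vector of the locally analytic induction. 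First I would verify that $\iota$ is always injective, which should follow from the density of the $U(\mf g_{\Sigma_p})$-translates (and more generally the $G_{\Sigma_p}$-translates) of the highest weight vector inside the locally analytic induction, together with the continuity of maps into $\widehat H^0(K^p)_{L,\an}^\lambda$. The remaining, and essential, task is surjectivity when $z$ is not bad.

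For surjectivity, an element $v\in J_{B_{\Sigma_p}}^\chi(\widehat H^0(K^p)_{L,\an}^\lambda)$ lifts, by Emerton's ``canonical lift'' construction, to a morphism from a smaller, ``balanced'' locally analytic principal series into $\widehat H^0(K^p)_{L,\an}^\lambda$. I then plan to extend this morphism to the full locally analytic induction $(\Ind_{B^{-}_{\Sigma_p}}^{G_{\Sigma_p}}\chi\delta_{B_{\Sigma_p}}^{-1})^{\an}$ by analyzing an Orlik--Strauch / locally analytic BGG-type filtration on this induction, whose graded pieces are built out of inductions attached to characters $\chi'$ strongly linked to $\chi$ in the sense of Definition~\ref{defi:strongly-linked}. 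Applying $\Hom_{G_{\Sigma_p}}(-,\widehat H^0(K^p)_{L,\an}^\lambda)$ to this filtration, the obstructions to extending $v$ land in $\Ext^1$-groups in a suitable category of essentially admissible locally analytic $G_{\Sigma_p}$-representations. By naturality of $\iota$ applied to each piece of the filtration, these obstructions are detected by the appearance of $T^+_{\Sigma_p}$-eigenvectors with eigenvalue $\chi' \neq \chi$ strongly linked to $\chi$ inside $J_{B_{\Sigma_p}}(\widehat H^0(K^p)_{L,\an}^\lambda)$. The hypothesis that $z=(\chi,\lambda)$ is not bad (Definition~\ref{defi:bad-points-new}) is precisely the statement that no such $\chi'$ occurs, so every obstruction vanishes and the extension exists.

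The main obstacle I anticipate is the $\Ext$-theoretic step: identifying the correct category (very strongly admissible locally analytic representations, or equivalently the dual category of coadmissible modules over the distribution algebra of $G_{\Sigma_p}$) in which the Orlik--Strauch / BGG filtration of $(\Ind_{B^{-}_{\Sigma_p}}^{G_{\Sigma_p}}\chi\delta_{B_{\Sigma_p}}^{-1})^{\an}$ can be constructed and remains exact after applying $\Hom$, and then cleanly matching the resulting $\Ext^1$-groups with the spectrum of strongly linked characters appearing in the Jacquet module. This is the $\GL_3$-analogue of \cite[Th\'eor\`eme~5.5.1]{be}, but the combinatorics of the Weyl group of $\GL_3^{|\Sigma_p|}$ (reflected in Definition~\ref{defi:strongly-linked}) makes both the structure of the filtration and the bookkeeping of obstructions substantially more intricate than in \emph{loc.~cit.}, and constitutes the technical heart of Section~\ref{sec:adjunction}.
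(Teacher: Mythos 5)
Your overall strategy --- reduce to a Verma/BGG analysis of the locally analytic induction and use the not-bad hypothesis to kill obstructions indexed by strongly linked characters --- is the right one and matches the paper's. But there are two concrete problems. First, your claim that $\iota$ is \emph{always} injective because the translates of the highest weight vector are dense in $(\Ind_{B^-_{\Sigma_p}}^{G_{\Sigma_p}}\chi\delta_{B_{\Sigma_p}}^{-1})^{\an}$ is false: under the identification $\mc C_c^{\lp}(N,L_{\chi\delta^{-1}})\simeq M(\chi)^\vee\otimes\mc C_c^{\sm}(N,L_{\delta^{-1}})$ of Lemma \ref{lem:dual-Verma}, the $(\mf g,B)$-module generated by that vector is only $L(\chi)\otimes\mc C_c^{\sm}(N,L_{\delta^{-1}})$, which generates a proper closed subrepresentation whenever $L(\chi)\subsetneq M(\chi)^\vee$ --- already for $\GL_2$ with $\chi$ locally algebraic dominant, the closure of the span of the translates is just the locally algebraic induction. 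Injectivity of $\iota$ is in fact equivalent to the vanishing of $\Hom_{(\mf{g},B)}((M(\chi)^\vee/L(\chi))\otimes\mc C_c^{\sm}(N,L_{\delta^{-1}}),\widehat H^{0}(K^p)_{L,\an}^{\lambda})$, and this already uses the not-bad hypothesis (Proposition \ref{prop:badness} and Corollary \ref{coro:need3b}); it is not a soft density statement.

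Second, the surjectivity step is the technical heart, and your sketch both leaves it open and misattributes where the vanishing comes from. It is not true that all obstructions ``are detected by $T^+_{\Sigma_p}$-eigenvectors of eigenvalue $\chi'$'': in the paper the obstruction group $\Ext^1_{\mf{g}}(M(\chi)^\vee/L(\chi),\widehat H^{0}(K^p)_{L,\an})^{N^0}$, localized at $(\mf{p}_\lambda,\mf{p}^+_{\id})$, is killed by a d\'evissage in which the $\Hom$-terms of simple subquotients $L(\chi')$ vanish by not-badness, while the remaining terms $\Ext^1_{\mf{g}}(M(\mu),-)=H^1(\mf{b},\widehat H^{0}(K^p)_{L,\an}(-\mu))$ vanish \emph{unconditionally}, because $\widehat H^{0}(K^p)_{L,\an}\simeq\mc C^{\la}(G(\mbf Z_p),L)^{\oplus r}$ is an injective $G(\mbf Z_p)$-module and hence $\mf{b}$-acyclic --- a global input special to degree-zero cohomology of a definite group, with no relation to badness. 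Relatedly, the paper sidesteps the difficulty you anticipate of building an exact Orlik--Strauch-type filtration in a category of admissible locally analytic representations: by \cite[Theorems 4.1.5 and 4.2.18]{em3} one replaces $\Hom_{G_{\Sigma_p}}$ from the full induction by $\Hom_{(\mf{g},B)}$ from the dense subspace $M(\chi)^\vee\otimes\mc C_c^{\sm}(N,L_{\delta^{-1}})$, so all the homological algebra takes place with $(\mf{g},B)$-modules, where the Bernstein--Gelfand--Gelfand theorem applies directly. You would need to supply both of these ingredients to complete the argument.
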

We note that an adjunction formula was also proven in \cite[Th\'eor\`me 4.3]{br} but that ours is different. See Remark \ref{rema:breuil}.

\subsection{Preliminaries on Verma modules}

Before giving the proof we recall necessary material on Verma modules. A useful reference for the Lie-theoretic theory is \cite{hum}. We will ignore some of subscripts $\Sigma _p$. For example, we write $\mf{g}$ for $\mf{g}_{\Sigma_p}$ and $\mf b$ for $\mf{b}_{\Sigma _p}$, etc. Recall that if $H$ is any subgroup of $G$, then a $(\mf{g}, H)$-module is a $\mf{g}$-module $V$ with a linear action of $H$ such that for any $X\in \mf{g} , h\in H, v \in V$ we have $h\cdot X \cdot v = \Ad _h(X)hv$. 

If a $\mf{g}$-module $V$ is locally $\mf{n}$-nilpotent, then it carries a canonical $(\mf{g}, N)$-structure obtained by integrating the $\mf{n}$-action. Moreover, if $V$ also has the structure of a $(\mf{g}, T)$-module, then the latter extends to a $(\mf{g}, B)$-structure.

Let $\chi \in \mc T(L)$ and write $L_\chi$ for the one-dimensional $L$-vectorspace on which $T$ acts by $\chi$. We define
\begin{equation*}
M(\chi) = \mrm U(\mf{g}) \otimes _{\mrm  U(\mf{b})} L_{\chi}
\end{equation*}
and
\begin{equation*}
M(\chi)^{\vee} = \Hom _{\mrm U(\mf{b} ^-)}(\mrm  U(\mf{g}), L_{\chi})^{\mf{n} ^{\infty}},
\end{equation*}
where $(-)^{\mf n^\infty}$ refers to the $\mf n$-nilpotent vectors. The first is an analog of the classical Verma module \cite[Part I]{hum}
\begin{equation*}
\Ver(d\chi) = \mrm U(\mf g)\otimes_{\mrm U(\mf b)} L_{d\chi}
\end{equation*}
where $d\chi: \mf t \rightarrow L$ is the derivative of $\chi$ and $L_{d\chi}$ denotes this one-dimensional representation of $\mf t$, seen as a module over $\mrm U(\mf b)$ by inflation.

We endow $M(\chi)$ (respectively, $M(\chi)^{\vee}$) with a $\mf{g}$-action by left translations (respectively, right translations) by $\mrm U(\mf{g})$. This action is locally $\mf n$-nilpotent. We let $T$ act by the adjoint action on $\mrm U(\mf{g})$ and by the character $\chi$ on $L_\chi$. This gives a structure of a $(\mf{g}, T)$-module on both $M(\chi)$ and $M(\chi)^{\vee}$, which extends canonically to a structure of a $(\mf{g}, B)$-module by the previous paragraph. Restricting to $\mrm U(\mf g)$, $M(\chi) \simeq \Ver(d\chi)$.

Since $\mrm U(\mf{g}) = \mrm U(\mf{b}^-) \oplus \mrm U(\mf{g})\mf{n}$ we can consider the map 
\begin{equation*}
\mrm U(\mf{g}) \ra \mrm U(\mf{g}) / \mrm U(\mf{g})\mf{n} \ra \mrm U(\mf{b} ^-) \stackrel{\chi}{\longrightarrow} L
\end{equation*}
as an element $v_\chi^{\vee}$ of $M(\chi)^{\vee}$. The vector $v_\chi^\vee$ is killed by $\mf{n}$ and is a $\chi$-eigenvector for the action of $T$. Therefore, there is a unique $(\mf{g}, B)$-equivariant map 
\begin{equation*}
\alpha _\chi : M(\chi) \ra M(\chi)^{\vee}
\end{equation*}
which takes $v_\chi := 1\otimes 1$ to $v^{\vee}_\chi$.

\begin{lemm} \label{lem:themapalpha}
Let $\chi \in \mc T(L)$.
\begin{enumerate}
\item The map $\alpha _{\chi}$ is the unique (up to scalar) $(\mf{g},B)$-equivariant map $M(\chi) \ra M(\chi)^{\vee}$. Its image $L(\chi)$ is an irreducible $(\mf{g},B)$-module.

\item The modules $M(\chi)$ and $M(\chi)^{\vee}$ are finite length and their simple subquotients are of the form $L(\chi ')$ where $\chi '$ is strongly linked to $\chi$.
\end{enumerate}
\end{lemm}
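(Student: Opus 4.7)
The approach is to reduce to the classical theory of Verma modules for $\mf{g}$ applied to the derivative $d\chi \in \mf{t}^\ast$, and then upgrade the $\mf{g}$-equivariant statements to $(\mf{g},B)$-equivariant ones by tracking the $T$-action on highest weight lines. Since $\mf{g} = \prod_{v \in \Sigma_p} \mf{g}_v$ decomposes over places, with $\mf{b}$ and $\mf{t}$ decomposing accordingly and $M(\chi) = \bigotimes_v M(\chi_v)$, we may without loss of generality work at a single place and assume $\mf{g} = \mf{gl}_3$.

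For part (1), as $\mf{g}$-modules one has $M(\chi) \simeq \Ver(d\chi)$ and $M(\chi)^\vee$ is the classical dual Verma module; the classical theory (e.g.\ \cite{hum}, Ch.\ 3) gives that the space of $\mf{g}$-equivariant homomorphisms between them is one-dimensional, spanned by the analogue of $\alpha_\chi$. Since any $(\mf{g},B)$-equivariant map is in particular $\mf{g}$-equivariant, uniqueness up to scalar follows; moreover $\alpha_\chi$ is manifestly $(\mf{g},T)$-equivariant by construction (both $v_\chi$ and $v_\chi^\vee$ transform under $T$ by $\chi$) and hence $(\mf{g},B)$-equivariant by integration of the $\mf{n}$-action, as explained just before the statement. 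The underlying $\mf{g}$-module of $L(\chi)$ is the classical irreducible $L(d\chi)$, the unique simple submodule of $\Ver(d\chi)^\vee$; since any $(\mf{g},B)$-submodule is a $\mf{g}$-submodule, irreducibility of $L(\chi)$ as a $(\mf{g},B)$-module is immediate.

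For part (2), finite length of $M(\chi)$ and $M(\chi)^\vee$ as $(\mf{g},B)$-modules follows from classical finite length of $\Ver(d\chi)$ and $\Ver(d\chi)^\vee$, since any chain of $(\mf{g},B)$-submodules is in particular a chain of $\mf{g}$-submodules. If $L$ is a simple $(\mf{g},B)$-subquotient, classical BGG identifies its underlying $\mf{g}$-module with $L(\nu)$ for some $\nu$ strongly linked to $d\chi$. Its $\mf{n}$-invariants form a one-dimensional $\mf{t}$-eigenspace, stable under the induced $T$-action (since $T$ normalizes $\mf{n}$ under $\Ad$), so $T$ acts on this line by a well-defined character $\chi' \in \mc{T}(L)$ with $d\chi' = \nu$; thus $L \simeq L(\chi')$ as $(\mf{g},B)$-modules.

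The main obstacle, and the heart of the argument, is to show that $\chi'$ is strongly linked to $\chi$ in the sense of Definition \ref{defi:strongly-linked}, upgrading the classical statement that $d\chi'$ is strongly linked to $d\chi$. I would argue by induction on the linkage length, reducing to the base case of a single reflection $s_\alpha$ with $\chi$ locally $\alpha$-dominant and $n := \langle d\chi + \rho_0, \alpha^\vee\rangle > 0$. The classical embedding $\Ver(s_\alpha \cdot d\chi) \hookrightarrow \Ver(d\chi)$ sends the generator of the submodule to $f_\alpha^n v_\chi$, where $f_\alpha$ generates $\mf{g}_{-\alpha}$. The direct computation
\begin{equation*}
t \cdot (f_\alpha^n v_\chi) = \Ad_t(f_\alpha)^n \chi(t)\, v_\chi = \alpha(t)^{-n}\chi(t)\,(f_\alpha^n v_\chi)
\end{equation*}
shows that $T$ acts on this vector by $\chi \cdot \alpha^{-n}$, which coincides with $s_\alpha \cdot \chi$: the latter, by construction, differs from $\chi$ precisely by the algebraic character of derivative $-n\alpha$ (cf.\ equation \eqref{eqn:bad-pt-twist}). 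Iterating along the linkage chain handles general composition factors for $M(\chi)$, and the dual Verma side is treated symmetrically by the same computation inside $M(\chi)^\vee$.
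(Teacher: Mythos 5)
Your proposal is correct and follows essentially the same route as the paper: reduce everything to the classical theory of $\Ver(d\chi)$ and $\Ver(d\chi)^{\vee}$, and upgrade to $(\mf{g},B)$-equivariance by tracking the $T$-action on the relevant highest-weight vectors, which is determined by their $\mf t$-weight because the adjoint action of $T$ on $\mrm U(\mf n^-)$ is algebraic. One small imprecision in your base case: the singular vector generating $\Ver(s_\alpha\cdot d\chi)\hookrightarrow \Ver(d\chi)$ equals $f_\alpha^{n}v_\chi$ only when $\alpha$ is a \emph{simple} root; for the non-simple positive root $\alpha_1+\alpha_2$ of $\mf{gl}_3$ (which Definition \ref{defi:strongly-linked} allows) it is a more complicated element $Xv_\chi$ with $X\in\mrm U(\mf n^-)$ of adjoint weight $-n\alpha$. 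This does not affect your conclusion, and the fix is exactly the paper's argument: $T$ acts on the whole weight space $\mrm U(\mf n^-)_{-n\alpha}$ by the algebraic character $\alpha^{-n}$, so the eigenvalue on the singular vector is $\chi\cdot\alpha^{-n}=s_\alpha\cdot\chi$ irrespective of the explicit form of $X$.
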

\begin{proof}
Each statement is well-known regarding just the $\mf g$-modules structure. As $\mf{g}$-modules, $M(\chi)$ is the Verma module $\Ver(d\chi)$, $M(\chi)^{\vee}$ is the dual Verma module $\Ver(d\chi)^{\vee}$ and $\alpha _\chi$ is the usual (unique up to a scalar) map $\Ver (d\chi) \ra \Ver (d\chi)^{\vee}$ whose image is known to be the simple $\mf{g}$-module $L(d\chi)$ of highest weight $d\chi$ \cite[Theorem 3.3(c)]{hum}. Since $\alpha _{\chi}$ is also $B$-equivariant, we see that its image is a simple $(\mf{g},B)$-module which extends $L(d\chi)$. This explains (1).

 Let us prove the second statement now. The corresponding assertion for Verma modules, that the irreducible constituents of $M(\chi)$ (and $M(\chi)^\vee$) are the simple $\mf g$-modules of highest weight $\mu$ with $\mu$ strongly linked to $d\chi$, is the famous result of Bernstein-Gelfand-Gelfrand \cite[Theorem 5.1]{hum}. It suffices to check that if $\mu '$ is strongly linked to $d\chi$ then
\begin{itemize}
\item There is a unique character $\chi'$ strongly linked to $\chi$ with $d\chi ' = \mu '$.
\item Any $\mf{g}$-map $\Ver(d\chi') \ra \Ver (d\chi)$ is in fact $T$-equivariant, that is, is a map $M(\chi ') \ra M(\chi)$.
\item Any $\mf{g}$-map $\Ver(d\chi)^{\vee} \ra \Ver(d\chi')^{\vee}$ is in fact $T$-equivariant, that is, is a map $M(\chi)^{\vee} \ra M(\chi')^{\vee}$.
\end{itemize}
Let $\mu'$ be strongly linked to $d\chi$ and define $d\gamma := \mu ' - d\chi$. Since $\mu'$ is strongly linked, $d\gamma$ is integral and hence is the derivative of an algebraic character $\gamma$. Thus $\chi ' := \gamma \chi$ is the desired character. This proves the first point.

The second two points are dual to each other, so let us just deal with the second. We claim that $T$ acts via $\chi'$ on any highest weight vector $v$ of $M(\chi)$ with weight $d\chi'$. But we know that $v$ is of the form $X v_{\chi}$ for some $X \in \mrm U(\mf{n} ^-)$ (because of $\mf{g}$-equivariance of our map). Moreover $X$ is of weight $d\gamma = d\chi' - d\chi$ for the adjoint action of $\mf{t}$. Indeed, if $H \in \mf{t}$, then $H X v_{\chi} = ([H,X]+XH)v_{\chi} = ([H,X] + d\chi(H)X)v_{\chi}$. Hence $d\chi'(H)X \cdot v_{\chi} = ([H,X]+d\chi(H)X)v_{\chi}$ and because $\mf{n}^-$ cannot kill $v_{\chi}$ we have $[H,X]=(d\chi'(H)-d\chi(H))X$, thus the claim.

Since the eigenvalues of the adjoint action on $T$ acting on $\mrm U(\mf n^-)$ are algebraic, they are determined by the differential action. Thus $X$ is automatically an eigenvector for the adjoint action of $T$ on $\mrm U(\mf n^-)$ with eigenvalue given by $\gamma = \chi'\chi^{-1}$. It follows that $Xv_\chi$ is an eigenvector for $T$ with eigenvalues given by $\chi'$, which shows $T$-equivariance of the map $M(\chi') \rightarrow M(\chi)$.
\end{proof}

For comparison with Emerton's constructions in the next subsection, we now derive another description of $M(\chi)^{\vee}$. It seems to be well-known but we could not find a good reference. We define $\mc{C}^{\pol}(N, L_{\chi})$ to be the space of $L_{\chi}$-valued polynomial functions on $N$. It carries a natural structure of $B$-module as explained after \cite[Lemma 2.5.3]{em3}. It carries also a natural action of $\mf{g}$ defined as follows. Any $f\in \mc{C}^{\pol}(N,L_{\chi})$ may be extended to a locally analytic function on the big cell $B^-N$ by putting $\tilde{f}(b^-n) := \chi (b^-)f(n)$. Since $B^-N$ is open in $G$ one can then make $X \in \mf{g}$ act by left invariant derivation on $f$, that is $Xf := \partial _X \tilde{f}_{|N}$. We then have a unique $(\mf{g},B)$-equivariant map $\beta _{\chi}: M(\chi) \ra \mc{C} ^{\pol}(N, L_{\chi})$ which takes $1\otimes 1$ to the constant function with value $1$.   

\begin{lemm}\label{lem:dual-Verma}
There is a $(\mf{g},B)$-equivariant isomorphism $\mc{C}^{\pol}(N, L_{\chi}) \simeq M(\chi)^{\vee}$ that carries the map $\beta _{\chi}$ to $\alpha _\chi$ up to a scalar.
\end{lemm}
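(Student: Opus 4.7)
The plan is to write down the isomorphism explicitly as the map $\Phi \colon \mc{C}^{\pol}(N, L_{\chi}) \to M(\chi)^{\vee}$ sending $f$ to the functional $u \mapsto (u \tilde{f})(e)$, where $u \in \mrm{U}(\mf{g})$ acts by left-invariant differentiation. Three things need to be checked, in order: that $\Phi(f)$ indeed lies in $M(\chi)^{\vee}$, that $\Phi$ is a $(\mf{g}, B)$-equivariant bijection, and that $\Phi \circ \beta_{\chi}$ coincides with $\alpha_{\chi}$ up to a scalar.

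Well-definedness is quick. Since left-invariant derivations commute with left translation, the function $u\tilde{f}$ again transforms as $(u\tilde{f})(b^-h) = \chi(b^-)(u\tilde{f})(h)$, and differentiating in the $\mf{b}^-$-direction at $h = e$ recovers $d\chi$, which gives the $\mrm{U}(\mf{b}^-)$-equivariance. For $\mf{n}$-nilpotency, observe that the restriction of $X\tilde{f}$ to $N$ for $X \in \mf{n}$ is just the polynomial derivation $Xf$, so if $f$ has polynomial degree at most $d$ then any product of $d+1$ elements of $\mf{n}$ annihilates $\tilde{f}$, first on $N$ and then on all of $B^-N$ by the same left-equivariance.

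For equivariance and bijectivity, the $\mf{g}$-equivariance is tautological from the identity $X\tilde{f} = \widetilde{Xf}$, which is exactly how $\mf{g}$ was defined to act on $\mc{C}^{\pol}(N, L_{\chi})$ in the paragraph preceding the lemma. The remaining $B$-equivariance reduces to $T$-equivariance since both sides are locally $\mf{n}$-nilpotent $(\mf{g}, T)$-modules, to which the $(\mf{g}, B)$-structure is canonical. Bijectivity follows from the fact that both $\mc{C}^{\pol}(N, L_\chi)$ and $M(\chi)^{\vee}$ identify compatibly under $\Phi$ with $\mrm{U}(\mf{n})^{*}_{\mf{n}\text{-fin}} \otimes L_{\chi}$: the right via the PBW decomposition $\mrm{U}(\mf{g}) \simeq \mrm{U}(\mf{b}^-) \otimes \mrm{U}(\mf{n})$ combined with the $\mf{n}$-finite condition, and the left via the natural isomorphism $\mc{O}(N) \simeq \mrm{U}(\mf{n})^{*}_{\mf{n}\text{-fin}}$ coming from $\exp \colon \mf{n} \xrightarrow{\sim} N$.

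Finally, a direct computation shows $\Phi(1) = v_{\chi}^{\vee}$: for $X \in \mf{n}$ the derivative $X\tilde{1}$ vanishes identically on $B^-N$, so $\Phi(1)$ factors through the PBW projection $\mrm{U}(\mf{g}) \to \mrm{U}(\mf{g})/\mrm{U}(\mf{g})\mf{n} = \mrm{U}(\mf{b}^-)$ followed by $\chi$. Thus $\Phi \circ \beta_{\chi} \colon M(\chi) \to M(\chi)^{\vee}$ is a $(\mf{g}, B)$-equivariant map sending $v_{\chi} \mapsto v_{\chi}^{\vee}$, and by the uniqueness up to scalar in Lemma \ref{lem:themapalpha}(1) it must coincide with $\alpha_{\chi}$ up to a scalar. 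The main obstacle I anticipate is the bookkeeping of the $T$-, and hence $B$-action — reconciling the geometric construction on $\mc{C}^{\pol}(N, L_{\chi})$ via left translation followed by $\chi$-trivialization with the adjoint-plus-character description on $M(\chi)^{\vee}$ — but this is forced by the uniqueness of the canonical $(\mf{g}, B)$-structure extending a locally $\mf{n}$-nilpotent $(\mf{g}, T)$-module, so it follows formally once $\mf{g}$-equivariance is in place.
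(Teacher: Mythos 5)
Your proposal is correct and takes essentially the same route as the paper: where the paper obtains the vector-space isomorphism by citing Emerton's \cite[(2.5.7)]{em3} (and \cite[Lemma 2.5.24]{em3} for compatibility of the $\mf{g}$-actions), you construct the same duality pairing $f \mapsto \left(u \mapsto (u\tilde{f})(e)\right)$ explicitly and verify its properties by hand. The concluding step — identifying $\Phi\circ\beta_\chi$ with $\alpha_\chi$ via the uniqueness statement of Lemma \ref{lem:themapalpha}(1) — is exactly the paper's argument.
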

\begin{proof}
The isomorphism of vector spaces is given by \cite[(2.5.7)]{em3} and the $T$-equivariance of this map is clear from the definitions. The unicity of $\alpha_\chi$ implies the assertion about $\beta_\chi$. (We note that in our reference, Emerton defines the $\mf{g}$-action on $\mc{C}^{\pol}(N,L_{\chi})$ {\em via} this isomorphism. However, it is easy to see from \cite[Lemma 2.5.24]{em3} that his action coincides with the one we have defined above.)
\end{proof}

\subsection{Proof of the adjunction formula}\label{subsec:adjunction-proof}

Let us now start the proof of the adjunction formula, which will occupy this entire subsection. We let $(\chi,\lambda) \in X(L)$ be not bad. We denote by $\mc{C}_c^{\lp}(N, L_\chi )$ the space of compactly supported locally $L_\chi $-valued polynomial functions on $N$. Because of the natural open immersion $N \hookrightarrow G / B^-$, we can regard $\mc{C}_c^{\lp}(N, L_\chi )$ as a $(\mf{g}, B)$-invariant subspace of $\Ind _{B^-}^{G} (\chi )^{\an}$. The inclusion of $\mc{C}_c^{\sm}(N, L_\chi )$ in $\mc{C}_c^{\lp}(N, L_\chi )$ thus induces a $(\mf{g}, B)$-equivariant map
\begin{equation} \label{eqn:map2}
\mrm U(\mf{g}) \otimes _{\mrm U(\mf{b})} \mc{C}_c^{\sm}(N, L_\chi ) \ra \mc{C}_c^{\lp}(N, L_\chi )
\end{equation}
We consider, and will explain, the following commutative diagram:
\begin{equation*}
\renewcommand{\labelstyle}{\textstyle}
\xymatrix@R=3pc@C=1pc{
\Hom _{G} (\Ind _{B^{-}} ^{G} (\chi \delta  ^{-1})^{\an}, \widehat H^{0}(K^p)_{L, \an} ^{\lambda}) \ar[r]^-{(1)} \ar[d]^-{\simeq}_-{\text{(a)}}  &  \Hom _{T} (\chi, J_{B}(\widehat H^{0}(K^p)_{L, \an} ^{\lambda})) \ar[d]^{\simeq (b)} \\
\Hom _{(\mf{g},B)}(\mc C_c^{\lp}(N,L_{\chi\delta^{-1}}), \widehat H^{0}(K^p)_{L, \an} ^{\lambda}) \ar[r] ^-{(2)} \ar[d]^-{\simeq}_-{\text{(c)}} & \Hom _{(\mf{g},B)}(M(\chi) \otimes C_c ^{\sm}(N,L _{\delta  ^{-1}}), \widehat H^{0}(K^p)_{L, \an} ^{\lambda}) \ar@{=}[d] \\
\Hom _{(\mf{g},B)}(M(\chi)^{\vee} \otimes \mc{C} _c ^{\sm}(N, L _{\delta  ^{-1}}), \widehat H^{0}(K^p)_{L, \an} ^{\lambda}) \ar[r]^{(3)} \ar[d]_-{\text{(3a)}} & \Hom _{(\mf{g},B)}(M(\chi) \otimes \mc{C}_c ^{\sm}(N,L _{\delta  ^{-1}}), \widehat H^{0}(K^p)_{L, \an} ^{\lambda}) \\
\Hom _{(\mf{g},B)}(L(\chi) \otimes \mc{C}_c ^{\sm}(N,L _{\delta  ^{-1}}), \widehat H^{0}(K^p)_{L, \an} ^{\lambda}) \ar[ur]_-{(3\text{b})}
}
\end{equation*}

We want to prove that (1) is an isomorphism. Let us now explain all the identifications and maps.

Let $\Ind _{B ^-} ^{G} (\chi \delta  ^{-1})(N)$ denote the subspace of $\Ind _{B^-} ^{G} (\chi \delta  ^{-1})$ of functions supported on $N$. By \cite[Lemma 2.4.14]{em3}, it generates $\Ind _{B ^-} ^{G}(\chi \delta  ^{-1})$ as a $G$-representation and $\mc{C}_c ^{\lp}(N,L_{\chi \delta  ^{-1}})$ is a dense $(\mf{g},B)$-subrepresentation in $\Ind _{B ^-} ^{G} (\chi \delta  ^{-1} )(N)$ by  \cite[Proposition 2.7.9]{em3}. Those are basic ingredients to conclude we have a sequence of isomorphisms
\begin{align*}
\Hom _{G} (\Ind _{B ^-} ^G (\chi \delta  ^{-1} ), \widehat H^{0}(K^p)_{L, \an} ^{\lambda}) &\simeq \Hom _{(\mf{g},B)} (\Ind _{B ^-} ^{G} (\chi \delta  ^{-1} )(N), \widehat H^{0}(K^p)_{L, \an} ^{\lambda})\\
& \simeq \Hom _{(\mf{g},B)} (\mc{C} _c ^{\lp}(N,L_{\chi \delta  ^{-1}}), \widehat H^{0}(K^p)_{L, \an} ^{\lambda}),
\end{align*}
which are proved as \cite[Theorem 4.1.5]{em3} and \cite[Theorem 4.2.18]{em3}. This gives the identification (a). Since 
\begin{equation*}
\mc C_c^{\lp}(N,L_{\chi\delta^{-1}}) \simeq \mc{C}^{\pol}(N,L_\chi ) \otimes \mc{C}_c ^{\sm}(N,L_{\delta  ^{-1}}),
\end{equation*}
the identification (c) follows from Lemma \ref{lem:dual-Verma}.

Going vertically down the right-hand side, the identification (b) results from \cite[Theorem 3.5.6]{em2} (see also \cite[(0.17)]{em3}). The map (2) is induced by the natural map of $(\mf{g},B)$-modules
\begin{equation*}
\mrm U(\mf{g}) \otimes _{\mrm U(\mf{b})} L_{\chi} \ra \mc{C}^{\pol} (N, L_{\chi } ).
\end{equation*}
The map (3) is induced from the map
\begin{equation*}
\alpha _{\chi}: M(\chi) \ra M(\chi) ^{\vee}
\end{equation*}
of Lemma \ref{lem:themapalpha}. The commutation is given by Lemma \ref{lem:dual-Verma} again. By Lemma \ref{lem:themapalpha} again, $\alpha_\chi$ factors  as
\begin{equation*}
M(\chi) \twoheadrightarrow L(\chi) \hookrightarrow M( \chi) ^{\vee},
\end{equation*}
which gives maps (3a) and (3b). The map (3b) is always injective since $L(\chi)$ is a quotient of $M(\chi)$.
\begin{rema}\label{rema:breuil}
This is a good time to discuss the interaction of what we have done and the work of Breuil \cite{br}. Assume, to put us in the context of Breuil's work, that $\chi = \theta \delta$ where $\theta$ is smooth and $\delta$ is an algebraic character of $T$. Then we can take the parabolic $P$ of \cite{br} to be the lower triangular Borel $B^-$, $M = M(\delta)$ or $M = M(\delta)^\vee$ and $\pi_B = \theta$. The identifications (a) and (c) above are  \cite[Proposition 4.2]{br} with $M = M(\delta)$. If we take $M = M(\delta)^\vee$ then the main adjunction formula \cite[Th\'eor\`eme 4.3]{br} is deduced from \cite[Proposition 4.2]{br} with $M = M(\delta)^\vee$ together with the identification (b) above. None of these identifications require a hypothesis on the pair $(\chi,\lambda)$, which is why there is no hypothesis in \cite[Th\'eor\`eme 4.3]{br}.
\end{rema}

We will now prove Theorem \ref{theo:analytic-maps} by showing that the map (3) is an isomorphism. In turn we will show separately that (3a) and (3b) are isomorphisms. To aid the reader, (3a) is an isomorphism following Corollary \ref{coro:need3b} and we show that (3b) is an isomorphism.

\begin{prop}\label{prop:badness}
Suppose that $(\chi,\lambda) \in X(L)$ is not bad. If $\chi'\neq\chi$ is strongly linked to $\chi$ then
\begin{equation*}
\Hom _{(\mf{g},B)}(L(\chi')\otimes \mc{C}_c ^{\sm}(N,L_{\delta  ^{-1}}), \widehat H^{0}(K^p)_{L, \an} ^{\lambda}) = 0.
\end{equation*}
\end{prop}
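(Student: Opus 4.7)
The plan is to argue by contradiction: suppose there exists a nonzero $(\mf{g}, B)$-equivariant map $f: L(\chi') \otimes \mc{C}_c^{\sm}(N, L_{\delta^{-1}}) \to V$, where $V := \widehat H^0(K^p)_{L,\an}^\lambda$. I will extract from $f$ a nonzero element of $J_B^{\chi'}(V)$, so that $(\chi', \lambda) \in X(L)$. Since $\chi' \neq \chi$ and $\chi'$ is strongly linked to $\chi$, this contradicts the hypothesis that $(\chi, \lambda)$ is not bad via Definition \ref{defi:bad-points-new}, completing the proof.

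To extract the eigenvector, first precompose $f$ with the surjection $M(\chi') \twoheadrightarrow L(\chi')$ coming from the factorization of $\alpha_{\chi'}$ in Lemma \ref{lem:themapalpha}, tensored with the identity on $\mc{C}_c^{\sm}(N, L_{\delta^{-1}})$. Because the tensor product is taken over the field $L$, this tensored map remains surjective, so the $(\mf{g}, B)$-equivariant composite
\[
\tilde f: M(\chi') \otimes \mc{C}_c^{\sm}(N, L_{\delta^{-1}}) \twoheadrightarrow L(\chi') \otimes \mc{C}_c^{\sm}(N, L_{\delta^{-1}}) \xrightarrow{f} V
\]
is again nonzero. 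Next, apply the identification (b) from the diagram in Section \ref{subsec:adjunction-proof}, which is an instance of \cite[Theorem 3.5.6]{em2}; since $\delta$ is the (fixed) modulus character of $B$ and no property particular to the variable character is used, this identification holds for an arbitrary continuous character of $T$. Applied with $\chi'$ in place of $\chi$, it yields
\[
\Hom_{(\mf{g}, B)}(M(\chi') \otimes \mc{C}_c^{\sm}(N, L_{\delta^{-1}}), V) \simeq \Hom_T(\chi', J_B(V)) = J_B^{\chi'}(V).
\]
Thus $\tilde f$ produces a nonzero element of $J_B^{\chi'}(V)$, and $(\chi', \lambda) \in X(L)$ by Emerton's description of the eigenvariety.

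The only point requiring any care is confirming that identification (b) really does apply verbatim with $\chi'$ in place of $\chi$. As noted, this is immediate from the general formulation of Emerton's Jacquet-functor adjunction together with the fact that $\delta$ is the modulus character; everything else in the argument is formal, using only the surjection $M(\chi') \twoheadrightarrow L(\chi')$ coming from Lemma \ref{lem:themapalpha}, the flatness of $L$-vector spaces, and the definitions of the eigenvariety and of bad points. Note in particular that no vanishing or simplicity result needs to be inputted separately, so the main work of the proof has already been absorbed into the construction of the diagram and the setup of Section \ref{sec:com-bad}.
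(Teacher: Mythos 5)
Your proof is correct and follows essentially the same route as the paper's: both reduce from $L(\chi')$ to $M(\chi')$ via the surjection from Lemma \ref{lem:themapalpha}, identify the resulting Hom space with the $\chi'$-eigenspace of the Jacquet module, and conclude by the not-bad hypothesis. The only difference is presentational: you invoke identification (b) wholesale for the character $\chi'$, whereas the paper unwinds it into Frobenius reciprocity for the Verma module followed by evaluation at $\mbf 1_{N^0}$, landing in $\widehat H^{0}(K^p)_{L,\an}^{N^0, T^+=\chi',\lambda}$, which is the same space.
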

\begin{proof}
Since $\Hom$ is right exact, we have 
\begin{align*}
\Hom _{(\mf{g},B)}(L(\chi')\otimes \mc{C}_c ^{\sm}(N,L_{\delta  ^{-1}}), \widehat H^0(K^p)_{L, \an} ^{\lambda}) & \subset \Hom _{(\mf{g},B)}(M(\chi')\otimes \mc{C}_c ^{\sm}(N,L _{\delta  ^{-1}}), \widehat H^{0}(K^p)_{L, \an} ^{\lambda}) \\
&= \Hom _{B}( \mc{C}_c ^{\sm}(N,L_{\chi' \delta  ^{-1}}), \widehat H^{0}(K^p)_{L, \an} ^{\lambda}).
\end{align*}
Evaluation on the characteristic function $\mbf 1_{N^0} \in \mc C_c^{\sm}(N,L_{\chi'\delta^{-1}})$ defines an isomorphism
\begin{equation*}
\Hom _{B}( \mc{C}_c ^{\sm}(N,L_{\chi' \delta  ^{-1}}), \widehat H^{0}(K^p)_{L, \an} ^{\lambda}) \simeq \widehat H ^{0}(K^p) ^{N^0, T^+ = \chi', \lambda} _{L, \an}.
\end{equation*}
However, the space on the right hand side vanishes since $\chi' \neq \chi$ is strongly linked to $\chi$ and $(\chi,\lambda)$ is not bad. (We remark that we normalize the action of $T^+$ as in \cite[Definition 3.4.2]{em2}, which explains the twists by $\delta$).
\end{proof}

\begin{coro} \label{coro:need3b}
The map (3b) is an isomorphism and the map (3a) is injective.
\end{coro}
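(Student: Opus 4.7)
The strategy is to realize both (3a) and (3b) as pieces of left-exact sequences coming from the factorization $M(\chi)\twoheadrightarrow L(\chi)\hookrightarrow M(\chi)^\vee$ of $\alpha_\chi$, and to use Proposition \ref{prop:badness} as input via a dévissage on the composition series provided by Lemma \ref{lem:themapalpha}(2). The key observation is that $L(\chi)$ occurs with multiplicity one as a $(\mf{g},B)$-subquotient in both $M(\chi)$ and $M(\chi)^\vee$ (because the multiplicity of $L(d\chi)$ in the classical Verma module $\Ver(d\chi)$ is one), and all other simple subquotients are of the form $L(\chi')$ with $\chi'$ strongly linked to $\chi$ and $\chi'\neq \chi$.

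First I would treat (3b). Let $K$ be the kernel of the surjection $M(\chi)\twoheadrightarrow L(\chi)$, giving a short exact sequence $0\to K\to M(\chi)\to L(\chi)\to 0$ of $(\mf g,B)$-modules. Tensoring over $L$ with $\mc C_c^{\sm}(N,L_{\delta^{-1}})$ preserves exactness, and then applying the left-exact functor $\Hom_{(\mf g,B)}(-,\widehat H^0(K^p)^\lambda_{L,\an})$ produces a left-exact sequence whose right-hand arrow is precisely (3b). Thus (3b) is automatically injective, and it is surjective provided
\begin{equation*}
\Hom_{(\mf g,B)}(K\otimes \mc C_c^{\sm}(N,L_{\delta^{-1}}),\widehat H^0(K^p)^\lambda_{L,\an})=0.
\end{equation*}
By Lemma \ref{lem:themapalpha}(2) and the multiplicity-one observation, $K$ has finite length with all simple $(\mf g,B)$-subquotients of the form $L(\chi')$ with $\chi'\neq \chi$ strongly linked to $\chi$. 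A finite dévissage using the left-exactness of $\Hom$ reduces the vanishing to the case of a single such $L(\chi')$, which is precisely the content of Proposition \ref{prop:badness}.

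Next I would treat (3a) by dualizing the argument. Let $Q = M(\chi)^\vee/L(\chi)$ and consider the short exact sequence $0\to L(\chi)\to M(\chi)^\vee\to Q\to 0$. Tensoring with $\mc C_c^{\sm}(N,L_{\delta^{-1}})$ and applying $\Hom_{(\mf g,B)}(-,\widehat H^0(K^p)^\lambda_{L,\an})$ gives a left-exact sequence
\begin{equation*}
0\to \Hom(Q\otimes \mc C_c^{\sm},\widehat H^0)\to \Hom(M(\chi)^\vee\otimes \mc C_c^{\sm},\widehat H^0)\xrightarrow{(3a)} \Hom(L(\chi)\otimes \mc C_c^{\sm},\widehat H^0).
\end{equation*}
Again by Lemma \ref{lem:themapalpha}(2) and the fact that $L(\chi)$ appears with multiplicity one in $M(\chi)^\vee$, the quotient $Q$ has finite length with all simple subquotients of the form $L(\chi')$ for $\chi'\neq \chi$ strongly linked to $\chi$. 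The same dévissage together with Proposition \ref{prop:badness} forces the leftmost group to vanish, so (3a) is injective.

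The main point to verify carefully is the multiplicity-one statement, so that the kernel $K$ and the cokernel $Q$ really have only ``non-$\chi$'' simple subquotients; this is the step that feeds Proposition \ref{prop:badness} cleanly. Everything else is a standard dévissage in the category of finite-length $(\mf g,B)$-modules, formally identical to the BGG arguments recalled in the proof of Lemma \ref{lem:themapalpha}, lifted from $\mf g$-modules to $(\mf g,B)$-modules using the first part of that lemma.
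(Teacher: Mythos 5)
Your proof is correct and is essentially the argument the paper gives: a dévissage on the composition series of the kernel of $M(\chi)\twoheadrightarrow L(\chi)$ and the cokernel of $L(\chi)\hookrightarrow M(\chi)^\vee$, feeding Proposition \ref{prop:badness} through Lemma \ref{lem:themapalpha}(2). Your explicit remark that $L(\chi)$ occurs with multiplicity one (so that these sub/quotients contain only constituents $L(\chi')$ with $\chi'\neq\chi$) is a standard Verma-module fact that the paper uses implicitly, and you are right that it is the point that makes the dévissage close cleanly.
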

\begin{proof}
By Lemma \ref{lem:themapalpha} the constituents of $M(\chi)$ and $M(\chi)^\vee$ are the $L(\chi')$ with $\chi'$ strongly linked to $\chi$. By Proposition \ref{prop:badness} we have that every constituent $L(\chi')\neq L( \chi)$ satisfies 
\begin{equation*}
\Hom _{(\mf{g},B)}(L(\chi')\otimes \mc{C}_c ^{sm}(N,L_{\delta  ^{-1}}), \widehat H^{0}(K^p)_{L, \an} ^{\lambda}) = 0.
\end{equation*}
Thus we clearly see that (3a) is injective and (3b) is surjective. Since the map (3b) is always injective, this proves that (3b) is an isomorphism.
\end{proof}
To finish the proof of the adjunction formula we need to prove that (3a) is surjective.  Let us denote by $\mf{p} _{\lambda}$ the ideal of the Hecke algebra $\mc{H}(K^p)^{\nr}$ corresponding to $\lambda$ and by $\mf{p} ^+ _{\id}$ the ideal of $L[T^+ ]$ corresponding to the character $id$.
\begin{lemm}
Let $V$ and $W$ be $(\mf{g},B)$-modules. We have
\begin{equation*}
\Hom _{(\mf{g},B)}(V \otimes \mc{C}_c ^{\sm}(N, L_{\delta ^{-1}}),W) = \Hom _{\mf{g}}(V,W) ^{N^0, T^+=\id}.
\end{equation*}
\end{lemm}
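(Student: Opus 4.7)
The plan is to prove this by combining a tensor-hom adjunction with the evaluation-at-$\mbf{1}_{N^0}$ isomorphism already used in the proof of Proposition~\ref{prop:badness}.

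First, I would view $V \otimes \mc C^{\sm}_c(N, L_{\delta^{-1}})$ as a $(\mf g, B)$-module with $\mf g$ acting through $V$ alone (the action of $\mf g$ on the smooth $B$-representation $\mc C^{\sm}_c(N, L_{\delta^{-1}})$ is trivial) and $B$ acting diagonally. I would endow $\Hom_{\mf g}(V, W)$ with the conjugation $B$-action $(b \cdot \alpha)(v) := b \alpha(b^{-1} v)$; the compatibility axiom $hXh^{-1} = \Ad_h(X)$ for the $(\mf g, B)$-structures on $V$ and $W$ shows that this preserves $\mf g$-equivariance and thus is well-defined. A routine check confirms that the usual adjunction $\phi \longleftrightarrow \big(f \mapsto (v \mapsto \phi(v\otimes f))\big)$ turns $\mf g$-equivariance on the left into the condition that $\psi(f) \in \Hom_{\mf g}(V, W)$, and turns diagonal $B$-equivariance on the left into $B$-equivariance on the right with respect to the conjugation action. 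This gives the identification
\begin{equation*}
\Hom_{(\mf g, B)}\big(V \otimes \mc C^{\sm}_c(N, L_{\delta^{-1}}), W\big) \simeq \Hom_B\big(\mc C^{\sm}_c(N, L_{\delta^{-1}}), \Hom_{\mf g}(V, W)\big).
\end{equation*}

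Second, I would invoke the evaluation-at-$\mbf{1}_{N^0}$ formula: for any $B$-module $M$ on which the Hecke operators $\pi_t$ of \eqref{eqn:hecke-operator-pit} act (which is automatic since $[N^0 : t N^0 t^{-1}]$ is finite for every $t \in T^+$), one has
\begin{equation*}
\Hom_B\big(\mc C^{\sm}_c(N, L_{\delta^{-1}}), M\big) \simeq M^{N^0, T^+ = \id}.
\end{equation*}
This is the $\chi' = \id$ case of the identification cited in the proof of Proposition~\ref{prop:badness}; the argument there uses only that $\mbf{1}_{N^0}$ is $N^0$-invariant, that it is a $\pi_t$-eigenvector with eigenvalue $\id$, and that its $B$- and $\pi_t$-translates span $\mc C^{\sm}_c(N, L_{\delta^{-1}})$, so it applies verbatim to any such $B$-module $M$. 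Applying it to $M = \Hom_{\mf g}(V, W)$ and composing with the adjunction above yields the lemma.

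There is no serious obstacle here: the only point requiring care is the well-definedness of the conjugation $B$-action on $\Hom_{\mf g}(V, W)$ (which comes down to the compatibility axiom for a $(\mf g, B)$-module) and the verification that the evaluation-at-$\mbf{1}_{N^0}$ isomorphism passes to a general $B$-module target, both of which are formal.
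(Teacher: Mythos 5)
Your proposal is correct and follows essentially the same route as the paper: the paper's one-line proof is precisely that both sides are isomorphic to $\Hom_B(\mc C_c^{\sm}(N, L_{\delta^{-1}}), \Hom_{\mf g}(V,W))$ via tensor-hom adjunction on one side and evaluation at $\mbf 1_{N^0}$ on the other. Your write-up simply makes explicit the conjugation $B$-action on $\Hom_{\mf g}(V,W)$ and the generality of the evaluation isomorphism, both of which the paper leaves implicit.
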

We remark again that the twist by $\delta $ in the $T^+$-action appears because we use the normalization of Emerton from \cite[Definition 3.4.2]{em2} in defining the action of the monoid $T^+$ on the $N^0$-invariants. If we were to make $T^+$ act by correspondences $[N   ^0 t^+ N   ^0]$, then we would have $T^+ = \delta ^{-1}$ on the right.
\begin{proof}
Both sides are isomorphic to $\Hom _{B}(\mc{C}_c ^{\sm}(N, L_{\delta ^{-1}}),\Hom_{\mf{g}}(V,W))$, where we pass to the right-hand side by evaluation at the characteristic function $1_{N^0}$.
\end{proof}
We consider the four-term exact sequence
\begin{multline*}
0 \ra \Hom _{\mf{g}}(M(\chi)^{\vee} / L(\chi) , \widehat H^{0}(K^p)_{L, \an})^{N^0} \ra \Hom _{\mf{g}}(M(\chi)^{\vee}  , \widehat H^{0}(K^p)_{L, \an} )^{N^0} \ra \\
\Hom _{\mf{g}}( L(\chi) , \widehat H^{0}(K^p)_{L, \an} )^{N^0} \ra \Ext ^1 _{\mf{g}}(M(\chi)^{\vee} / L(\chi) , \widehat H^{0}(K^p)_{L, \an})^{N^0}.
\end{multline*}
We want to prove that
$$\Hom _{\mf{g}}(M(\chi)^{\vee} , \widehat H^{0}(K^p)_{L, \an} )^{N^0}[\mf{p} _{\lambda}, \mf{p}^+ _{\id}] \simeq \Hom _{\mf{g}}( L(\chi), \widehat H^{0}(K^p)_{L, \an} )^{N^0} [\mf{p} _{\lambda}, \mf{p}^+ _{\id}]$$
Because of the exact sequence above it is enough to show
$$\Hom _{\mf{g}}(M(\chi)^{\vee} / L(\chi) , \widehat H^{0}(K^p)_{L, \an})^{N^0} _{\mf{p} _{\lambda}, \mf{p}^+ _{\id}}=0$$ 
$$\Ext ^1 _{\mf{g}}(M(\chi)^{\vee} / L(\chi) , \widehat H^{0}(K^p)_{L, \an})^{N^0} _{\mf{p} _{\lambda}, \mf{p}^+ _{\id}} = 0$$
where the subscript $\mf{p} _{\lambda}, \mf{p}^+ _{\id}$ denotes the localisation at respective ideals. By devissage, it suffices to show

\begin{lemm}\label{lem:localisation} Suppose that $(\chi,\lambda)$ is not bad, $\chi'$ is strongly linked to $\chi$ and let $\mu = d\chi'$. Then
\begin{enumerate}
\item[(a)]  $\Hom _{\mf{g}}(L(\mu), \widehat H^{0}(K^p)_{L, \an} ^{\lambda}) _{\mf{p} _{\lambda}, \mf{p} ^+ _{\id }} ^{N^0} =0$ and
\item[(b)] $\Ext ^1 _{\mf{g}}(L(\mu) , \widehat H^{0}(K^p)_{L, \an})^{N^0} _{\mf{p} _{\lambda}, \mf{p}^+ _{\id}}  = 0.$
\end{enumerate}
\end{lemm}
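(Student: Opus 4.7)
The plan is to prove both (a) and (b) by exploiting the non-badness of $(\chi,\lambda)$, which by definition excludes any point $(\chi',\lambda)\in X(L)$ with $\chi'\neq\chi$ strongly linked to $\chi$, and hence (by Emerton's description of $X$) any non-zero generalized eigenvectors of weight $\chi'$ in $J_B(\widehat H^0(K^p)^\lambda_{L,\an})$. The key translation is Emerton's identification $J_B^{\chi'}(\widehat H^{0,\lambda}_{\an}) \simeq \widehat H^{0,\lambda,N^0,T^+=\chi'}_{\an}$, together with the fact that for a locally analytic $G_{\Sigma_p}$-representation the action of $\mathfrak n$ integrates to the action of $N^0$.

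For (a), a non-zero $\mathfrak g$-map $f: L(\mu)\to\widehat H^{0,\lambda}_{\an}$ is determined by the image $v = f(v_\mu)$ of the highest weight vector, which is $\mathfrak n$-annihilated and a $\mathfrak t$-eigenvector of weight $\mu = d\chi'$. Local analyticity gives $v\in(\widehat H^{0,\lambda}_{\an})^{N^0}$, and the $T^+$-action on $v$ is through a specific character of $T^+$ whose differential is $\mu$ (with the twist by $\delta$ coming from Emerton's normalization in \cite[Definition 3.4.2]{em2} as used just before the statement of the lemma). Localizing at $\mathfrak p^+_{\id}$ picks out exactly the generalized eigenspace matching $\chi'$ on the Jacquet side, which is zero by non-badness; this forces $v=0$ and hence $f=0$.

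For (b), I would use the short exact sequence $0\to K\to M(\mu)\to L(\mu)\to 0$, where $K$ is the maximal proper submodule of $M(\mu)$. By Lemma \ref{lem:themapalpha}, all constituents of $K$ are $L(\mu'')$ with $\mu''=d\chi''$, $\chi''$ strongly linked to $\chi$ through $\chi'$, and $\chi''\neq\chi$ (since $\mu''\neq\mu$ and $\mu$ is itself below $d\chi$). The long exact sequence in $\Ext^*_{\mathfrak g}(-,\widehat H^0_{\an})$ gives
\begin{equation*}
\Hom_{\mathfrak g}(K,\widehat H^0_{\an})^{N^0}_{\mathfrak p_\lambda,\mathfrak p^+_{\id}} \to \Ext^1_{\mathfrak g}(L(\mu),\widehat H^0_{\an})^{N^0}_{\mathfrak p_\lambda,\mathfrak p^+_{\id}}\to \Ext^1_{\mathfrak g}(M(\mu),\widehat H^0_{\an})^{N^0}_{\mathfrak p_\lambda,\mathfrak p^+_{\id}},
\end{equation*}
in which localization is exact, so by iterated application of (a) to the constituents of $K$ the left-hand term vanishes. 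For the right-hand term, apply Frobenius reciprocity $\Ext^*_{\mathfrak g}(M(\mu),W)\simeq \Ext^*_{\mathfrak b}(L_\mu, W)$ and Hochschild--Serre for $\mathfrak b=\mathfrak t\ltimes\mathfrak n$ to reduce the computation of $\Ext^1$ to the $\mathfrak t$-weight-$\mu$ parts of $H^0(\mathfrak n,\widehat H^0_{\an})$ and $H^1(\mathfrak n,\widehat H^0_{\an})$. The $H^0$ part is exactly the situation of (a). The $H^1$ part is a derived Jacquet module, whose $\mu$-eigenpart after $(\mathfrak p_\lambda,\mathfrak p^+_{\id})$-localization embeds into extensions of the $\chi$-isotypic piece of $J_B(\widehat H^{0,\lambda}_{\an})$ by its $\chi'$-isotypic piece, and again vanishes by non-badness.

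The main obstacle is the treatment of $H^1(\mathfrak n,\widehat H^0_{\an})$ in part (b): one must argue that the derived Jacquet functor cannot create a non-zero $\chi'$-contribution when the ordinary Jacquet functor does not, which is the essential reason the non-badness hypothesis suffices. If this route turned out to be awkward, an alternative is to iterate the devissage using a longer resolution of $L(\mu)$ by Verma modules (a BGG-type resolution, which is available for the weights arising here because $\mu$ differs from a dominant integral weight by a locally constant character), thereby reducing all higher Ext groups to finitely many $\Hom$ computations each of which is killed by (a).
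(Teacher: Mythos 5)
Your part (a) and the d\'evissage set-up for (b) (the sequence $\Hom(K,-)\to\Ext^1(L(\mu),-)\to\Ext^1(M(\mu),-)$ with $K=\ker(M(\mu)\twoheadrightarrow L(\mu))$, handling the left term by iterating (a) over the constituents of $K$) match the paper's argument. One small imprecision in (a): $\mf n$-annihilation plus local analyticity only gives invariance under an \emph{open} subgroup of $N^0$, not $N^0$ itself; the $N^0$-invariance is already imposed by the superscript in the statement, so nothing breaks, but the identification $\Hom_{\mf g}(M(\mu),\widehat H^0_{\an})^{N^0}=\widehat H^{0,N^0,\mf t=\mu}_{\an}$ is where it comes from. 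You should also make explicit the step that $\mc H(K^p)^{\nr}$ and $T^+$ act by compact operators on this compact-type space, so that the localization is finite-dimensional and hence torsion, which is what lets you identify the localized space with the honest eigenspace $\widehat H^{0,N^0,T^+=\chi',\lambda}_{\an}$ killed by non-badness.

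The genuine gap is in your treatment of $\Ext^1_{\mf g}(M(\mu),\widehat H^0(K^p)_{L,\an})$. You reduce via Frobenius reciprocity to $\Ext^1_{\mf b}$ and Hochschild--Serre, and then you are left with an $H^1(\mf n,-)$ ("derived Jacquet") term that you concede you cannot control; the hope that non-badness kills its $\chi'$-part is unsubstantiated, and indeed non-badness is not the relevant input here. The paper's point is that this term vanishes \emph{unconditionally}: since $G$ is definite, $\widehat H^0(K^p)_{L,\an}$ is injective as a $G(\mbf Z_p)$-module and hence isomorphic to $\mc C^{\la}(G(\mbf Z_p),L)^{\oplus r}$, and $\mc C^{\la}(G(\mbf Z_p),L)$ is $\mf b$-acyclic (and stable under $\mu$-twist), so $\Ext^1_{\mf g}(M(\mu),\widehat H^0_{\an})=H^1(\mf b,\widehat H^0_{\an}(-\mu))=0$ before any localization. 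Your fallback via a BGG resolution does not avoid this: computing $\Ext^1(L(\mu),-)$ from $\Hom$'s of the resolution terms requires precisely that the Verma modules in the resolution be $\Hom(-,\widehat H^0_{\an})$-acyclic, i.e.\ the same vanishing of $\Ext^{\geq 1}(M(\nu),\widehat H^0_{\an})$ you are trying to establish. Without the injectivity/acyclicity input, part (b) is not proved.
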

\begin{proof}
(a) We can replace $L(\mu)$ by $M(\mu)$. Thus it is enough to prove that 
$$\Hom _{\mf{g}}(M(\mu), \widehat H^{0}(K^p)_{L, \an} ) _{\mf{p} _{\lambda}, \mf{p} ^+ _{\id }} ^{N^0} = 0 $$
But $\Hom _{\mf{g}}(M(\mu), \widehat H^{0}(K^p)_{L, \an} ) ^{N^0} = \widehat H^{0}(K^p)_{L, \an} ^{N^0, \mf{t} = \mu}$. It follows from \cite[Lemma 2.3.4(ii)]{em2} that $\mc{H}(K^p)^{\nr}$ and $T^+$ act by compact operators on $\widehat H^{0}(K^p)_{L, \an} ^{N^0, \mf{t} = \mu}$ because this space is of compact type. Hence the localisation $(\widehat H ^0 (K^p)^{N^0, \mf{t} = \mu} _{L, \an})_{\mf{p}_{\lambda}, \mf{p} ^+ _{\id }}$ is finite dimensional. Therefore 
$$\Hom _{\mf{g}}(M(\mu), \widehat H^{0}(K^p)_{L, \an} ) _{\mf{p} _{\lambda}, \mf{p} ^+ _{\id }} ^{N^0}$$ 
is of $\mf{p} _{\lambda}$-torsion and of $\mf{p} ^+ _{id}$-torsion. Hence if this space is non-zero then also
$$\Hom _{\mf{g}}(M(\mu), \widehat H^{0}(K^p)_{L, \an} )^{N^0}[\mf{p} _{\lambda}, \mf{p} ^+ _{\id} ] = \widehat{H} ^{0}(K^p)_{L, \an} ^{N  ^0, T^+ = \mu, \lambda}$$
is non-zero which would contradict our assumption that $(\chi,\lambda)$ is not bad. Hence we conclude.

(b)  Let us prove firstly that $\Ext ^1 _{\mf{g}}(L(\mu) , \widehat H^{0}(K^p)_{L, \an})^{N^0} _{\mf{p} _{\lambda}, \mf{p}^+ _{\id}}$ is of $\mf{p} _{\lambda}$-torsion and of $\mf{p} ^+ _{\id}$-torsion. By the exact sequence
$$\Hom _{\mf{g}}(\ker(M(\mu) \ra L(\mu)), \widehat H^{0}(K^p)_{L, \an}) ^{N^0} \ra \Ext ^1 _{\mf{g}}(L(\mu) , \widehat H^{0}(K^p)_{L, \an})^{N^0}  \ra$$
$$\ra \Ext ^1 _{\mf{g}}(M(\mu), \widehat H^{0}(K^p)_{L, \an})^{N^0} $$
and devissage from (a), it suffices to prove that $\Ext ^1 _{\mf{g}}(M(\mu), \widehat H^{0}(K^p)_{L, \an}) ^{N^0} _{\mf{p} _{\lambda}, \mf{p}^+ _{\id}} = 0$. We have 
$$\Ext ^1 _{\mf{g}}(M(\mu), \widehat H^{0}(K^p)_{L, \an}) = \Ext ^1 _{\mf{b}}(\mu, \widehat H^{0}(K^p)_{L, \an}) = H^1(\mf{b}, \widehat H^{0}(K^p)_{L, \an} (-\mu)).$$
We know that $\widehat H^{0}(K^p)_{L, \an}$ is injective as a $G(\mbf{Z}_p)$-module. The standard argument is given for example in the proof of Proposition 4.9 in \cite{cho3}, from which we infer that in fact $\widehat H^{0}(K^p)_{L, \an} \simeq \mc{C}^{\la} (G(\mbf{Z}_p),L)^{\oplus r}$ as $G(\mbf{Z}_p)$-modules, for some integer $r>0$ . Now, observe that $\mc{C}^{\la} (G(\mbf{Z}_p),L)$ is $\mf{b}$-acyclic and invariant under $\mu$-torsion (it is stated in the proof of \cite[Proposition 5.1.2]{be}, compare also with the proof of \cite[Proposition 3.1]{st4}, where $\mf{g}$-acyclicity is proved), hence  
$$H^1(\mf{b}, \widehat H^{0}(K^p)_{L, \an} (-\mu))=0.$$
This proves the claim. 
\end{proof}

This finishes the proof of the adjunction formula.

\subsection{Principal series on the generic ordinary locus}

Due to the normalizations in the rest of our work we will now write points $z = (\chi, \lambda) \in X(\overline{\mbf Q}_p)$ as pairs $(\xi^{(13)}\delta_{B_{\Sigma _p}},\lambda)$ where $\xi \in \mc T_{\Sigma_p}$, $\xi^{(13)}$ is the usual twisting by the longest Weyl-element $(13)$:
\begin{equation*}
(\xi_1\otimes \xi_2\otimes \xi_3)^{(13)} = \xi_{3}\otimes\xi_2\otimes \xi_1
\end{equation*}
and $\delta _{B_{\Sigma _p}} = (\delta _{B_{v}})_{v\in \Sigma _p}$ is the $\Sigma_p$-tuple of modulus characters. This allows us to pass between Borel $B_{\Sigma _p}$ and its opposite $B _{\Sigma _p} ^-$:
$$(\Ind_{B _{\Sigma _p}} ^{G _{\Sigma _p}} \xi)^{\an} = (\Ind_{B _{\Sigma _p} ^-} ^{G _{\Sigma _p}} \chi \delta _{B_{\Sigma _p}} ^{-1})^{\an}$$
We will use that tacitly in the rest of the text.

We finish this subsection by giving our main application of Theorem \ref{theo:analytic-maps} on the generic ordinary locus described in Section \ref{subsec:generic-ordinary}. If $z = z_{\nc}^{(\sigma)}$ is a generic ordinary point then $z$ is of the form $z = (\theta^{\nc,(\sigma)} \delta_k, \lambda_z)$ where
\begin{equation*}
\theta_v^{\nc, (\sigma_v)} = (\psi_{\sigma_v(1)}z^{h_{\sigma_v(1)}} \otimes \psi_{\sigma_v(2)}z^{h_{\sigma_v(2)}} \otimes \psi_{\sigma_v(3)}z^{h_{\sigma_v(3)}})(|\cdot|^2 \otimes |\cdot| \otimes 1)
\end{equation*}
and $k_v = (k_{1,v} \geq k_{2,v} \geq k_{3,_v})$ is a dominant weight for $\GL_3(\mbf Q_p)$. It will be convenient to use the Hodge-Tate weights $h_{i,v} = -k_{i,v} + i -1$. If we then write $z$ in the form $z = z_{\nc}^{(\sigma)} = ((\xi_{z_{\nc}^{(\sigma)}})^{(13)} \delta_{B_{\Sigma _p}},\lambda)$, we see that
\begin{equation*}
\xi_{z_{\nc}^{(\sigma)}} = \bigotimes_{v \in \Sigma_p} \left(\psi_{\sigma_v(3),v}z^{h_{\sigma_v(3),v}-h_{3,v}}\varepsilon^2 \otimes \psi_{\sigma_v(2),v}z^{h_{\sigma_v(2),v}-h_{2,v}}\varepsilon \otimes \psi_{\sigma_v(1)}z^{h_{\sigma_v(1),v}-h_{1,v}}\right)
\end{equation*}
As Theorem \ref{theo:analytic-maps} only applies to not bad points $z$, we will need to compute slightly more. Let $\tau$ be the weight-type of a point $z$ satisfying $(\dagger)$. Then, by Proposition \ref{prop:good-points} we have that 
\begin{equation*}
s_{\alpha_\tau} \cdot z = (\theta_{\nc}^{(\sigma)}\delta_{\tau\cdot k},\lambda_z)
\end{equation*}
is not bad. A short computation shows that $s_{\alpha_\tau}\cdot z = ((\xi_{s_{\alpha_\tau}\cdot z})^{(13)}\delta_{B_{\Sigma_p}},\lambda_z)$ where
\begin{equation}\label{eqn:char-of-xgood}
\xi_{s_{\alpha_\tau}\cdot z} := \bigotimes_{v \in \Sigma_p} \left(\psi_{\sigma_v(3),v}z^{h_{\sigma_v(3),v}-h_{\tau_v(3),v}}\varepsilon^2 \otimes \psi_{\sigma_v(2),v}z^{h_{\sigma_v(2),v}-h_{\tau_v(2),v}}\varepsilon \otimes \psi_{\sigma_v(1)}z^{h_{\sigma_v(1),v}-h_{\tau_v(1),v}}\right).
\end{equation}
Continue to assume that $z$ satisfies $(\dagger)$ and define a locally analytic (respectively, continuous) principal series representation
\begin{equation}\label{eqn:def-E}
I_{\ast}(z) = (\Ind _{B_{\Sigma _p}} ^{G_{\Sigma _p}} \xi_{s_{\alpha_\tau}\cdot z})^{\ast}
\end{equation}
where $\ast \in \{\an,C^0\}$. Notice that if $\xi_{s_{\alpha_\tau}\cdot z}$ is a unitary character of $\mc T_{\Sigma_p}$ (which happens if and only if $\sigma_v(i) = \tau_v(i)$ for all $i$ and $v \in \Sigma_p)$ then the universal unitary completion $I_{\an}(z)^\wedge$ of $I_{\an}(z)$ is $I_{C^0}(z)$. Our main corollary of Theorem \ref{theo:analytic-maps} is the following.
\begin{coro}\label{coro:dagger-map}
Suppose that $z=(\chi,\lambda) \in X_{\cl}$ satisfies $(\dagger)$. Then there exists a non-zero map
\begin{equation*}
I_{\an}(z)^\wedge \ra \widehat{H}^0(K^p)_{L} ^{\lambda}
\end{equation*}
\end{coro}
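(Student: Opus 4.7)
The plan is to apply the adjunction formula of Theorem \ref{theo:analytic-maps} at the (possibly trivial) companion point of $z$. Let $\tau$ denote the weight-type of $z$. By Proposition \ref{prop:good-points}, the point $s_{\alpha_\tau}\cdot z \in X(L)$ is not bad: if $z$ is non-critical this equals $z$ itself (since then $\tau = (1)$ and $s_{\alpha_\tau}\cdot z = z$ by the convention following Lemma \ref{lemm:closed-under-bruhat}), while if $z$ is critical it is the unique companion point supplied by Proposition \ref{prop:indec-produce-comp-pt}. In our normalization we have $(\chi_{s_{\alpha_\tau}\cdot z},\lambda) = ((\xi_{s_{\alpha_\tau}\cdot z})^{(13)}\delta_{B_{\Sigma_p}},\lambda)$, and since this pair defines a point on the eigenvariety $X(L)$, the Emerton description of $X$ recalled in Section \ref{subsec:explicit-construct} immediately gives
$$J_{B_{\Sigma_p}}^{\chi_{s_{\alpha_\tau}\cdot z}}\bigl(\widehat H^0(K^p)^{\lambda}_{L,\an}\bigr) \neq 0.$$

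Next, I would feed $s_{\alpha_\tau}\cdot z$ into Theorem \ref{theo:analytic-maps} (which applies precisely because it is not bad) to obtain a non-zero continuous $G_{\Sigma_p}$-equivariant map
$$f\colon \bigl(\Ind_{B_{\Sigma_p}^-}^{G_{\Sigma_p}} \chi_{s_{\alpha_\tau}\cdot z}\delta_{B_{\Sigma_p}}^{-1}\bigr)^{\an} \longrightarrow \widehat H^0(K^p)^{\lambda}_{L,\an}.$$
Invoking the identification noted at the start of this subsection, which interchanges $B_{\Sigma_p}$ and $B_{\Sigma_p}^-$ via the twist relating $\chi$ and $\xi^{(13)}\delta_{B_{\Sigma_p}}$, the source of $f$ is precisely $I_{\an}(z) = (\Ind_{B_{\Sigma_p}}^{G_{\Sigma_p}} \xi_{s_{\alpha_\tau}\cdot z})^{\an}$.

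Finally, composing $f$ with the continuous injection $\widehat H^0(K^p)^{\lambda}_{L,\an} \hookrightarrow \widehat H^0(K^p)^{\lambda}_{L}$ produces a non-zero continuous $G_{\Sigma_p}$-equivariant map from $I_{\an}(z)$ into the unitary $L$-Banach representation $\widehat H^0(K^p)^{\lambda}_{L}$. By the universal property of the universal unitary completion, this map factors through $I_{\an}(z)^\wedge$, giving the desired non-zero map $I_{\an}(z)^\wedge \to \widehat H^0(K^p)^{\lambda}_{L}$. The only point requiring some care is the existence of $I_{\an}(z)^\wedge$ itself when $\xi_{s_{\alpha_\tau}\cdot z}$ is not unitary; but the existence of a non-zero continuous $G_{\Sigma_p}$-equivariant map from $I_{\an}(z)$ into a unitary Banach representation lets one pull back the unit ball of the target to a bounded $G_{\Sigma_p}$-stable open lattice in $I_{\an}(z)$, with respect to which the universal unitary completion exists and enjoys the factorization property used above.
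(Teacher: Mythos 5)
Your proposal is correct and is essentially the paper's argument: the paper's proof is the one-line "this follows from Theorem \ref{theo:analytic-maps}, which is valid by Proposition \ref{prop:good-points}," and you have simply spelled out the implicit steps (non-vanishing of the Jacquet eigenspace at the not-bad point $s_{\alpha_\tau}\cdot z$, the $B_{\Sigma_p}$ versus $B_{\Sigma_p}^-$ normalization, and factoring through the universal unitary completion).
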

\begin{proof}
This follows from Theorem \ref{theo:analytic-maps}, which is valid by Proposition \ref{prop:good-points}.
\end{proof}
We will see later (see Proposition \ref{prop:multiplicity}(2)) that any such map is injective. This being granted, we can say that for any point $z$ satisfying $(\dagger)$ we have realized a locally analytic principal series corresponding to $z$ inside of the $G_{\Sigma_p}$-representation $\widehat H^0(K^p)^{\lambda}_{L}$. 

\section{Application to the conjectures of Breuil and Herzig}
\label{sec:breuil-herzig-section}

\subsection{Unitary completions of locally analytic representations}\label{subsec:unit-comp}

We now relate our work to certain representations which arise in the work of Breuil-Herzig. We will first describe a certain locally analytic representation $\Pi(\rho_p)^{\ord}$ of $\GL_3(\mbf Q_p)$ associated to a indecomposable generic ordinary representation $\rho_p$ by Breuil and Herzig. Then we show that it arises as a $\GL_3(\mbf Q_p)$-subrepresentation of a completed cohomology space. For brevity and clarity, we will explicitly describe all the representations that we need; for the combinatorics of the general situation, see \cite{bh}.

For each simple root $\alpha=\alpha_1$ or $\alpha_2$ for $\GL_3(\mbf Q_p)$, positive with respect to the upper triangular Borel $B(\mbf Q_p)$, we get a Levi component $G_{\alpha}$ of a parabolic $P_\alpha$ such that $\alpha$ is the unique simple positive root appearing in $B(\mbf Q_p) \cap G_\alpha$. For example, if $\alpha = \alpha_1$ then
\begin{equation*}
B(\mbf Q_p) \cap G_{\alpha} = \begin{pmatrix} * & *\\
& *\\
& & *\end{pmatrix} \subset G_{\alpha} = \begin{pmatrix} * & * \\ * & *\\ & & *\end{pmatrix} \subset P_{\alpha} =\begin{pmatrix} * & * & * \\ * & * & *\\ & & *\end{pmatrix}.
\end{equation*}

Consider a pair of continuous unitary characters $\psi, \psi': \mbf Q_p^\times \rightarrow \mc O_L^\times$. If $\psi\psi'^{-1}$ is not of the form $\varepsilon^{\pm 1}$ then \cite[Proposition B.2]{bh} implies that there is a unique non-split extension
\begin{equation*}
0\rightarrow \Ind_{\left(\begin{smallmatrix} * & * \\ & * \end{smallmatrix}\right)}^{\GL_2(\mbf Q_p)}(\psi' \varepsilon \otimes \psi)^{C^0} \rightarrow E(\psi,\psi') \rightarrow \Ind_{\left(\begin{smallmatrix} * & * \\ & * \end{smallmatrix}\right)}^{\GL_2(\mbf Q_p)}(\psi \varepsilon \otimes \psi')^{C^0} \rightarrow 0.
\end{equation*}
Moreover, if $\psi'\psi^{-1}$ has a positive weight Hodge-Tate weight $h$ then \cite[Th\'eor\`eme 2.2.2]{be} implies that
\begin{equation}\label{eqn:bh-realization}
E(\psi,\psi') \simeq \Ind_{\left(\begin{smallmatrix} * & * \\ & * \end{smallmatrix}\right)}^{\GL_2(\mbf Q_p)}(\psi \varepsilon z^{-h} \otimes \psi z^h)^{\an,\wedge}.
\end{equation}
See also \cite[\S 6.3]{em4} and note that both references use Hodge-Tate weights which are the negative of ours).

If $\psi'': \mbf Q_p^\times \rightarrow \mc O_L^\times$ is another continuous unitary character we consider the representation $E(\psi,\psi') \otimes \psi''$ of $G_\alpha$ for any simple positive root $\alpha$ of $\GL_3(\mbf Q_p)$, and form the continuous induction $\Ind_{P_\alpha}^{\GL_3(\mbf Q_p)}(E(\psi,\psi')\otimes \psi'')^{C^0}$. By definition these give non-zero extension classes
\begin{equation}\label{eqn:ext-class23}
0 \rightarrow \Ind_{B(\mbf Q_p)}^{\GL_3(\mbf Q_p)}(\psi'' \otimes \psi'\varepsilon \otimes \psi)^{C^0} \rightarrow \Ind_{P_{\alpha_2}}^{\GL_3(\mbf Q_p)}(\psi''\otimes E(\psi,\psi'))^{C^0} \rightarrow \Ind_{B(\mbf Q_p)}^{\GL_3(\mbf Q_p)}(\psi'' \otimes \psi\varepsilon \otimes \psi')^{C^0} \rightarrow 0.
\end{equation}
and
\begin{equation}\label{eqn:ext-class12}
0 \rightarrow \Ind_{B(\mbf Q_p)}^{\GL_3(\mbf Q_p)}( \psi'\varepsilon \otimes \psi \otimes \psi'')^{C^0} \rightarrow \Ind_{P_{\alpha_1}}^{\GL_3(\mbf Q_p)}(E(\psi,\psi')\otimes \psi'')^{C^0} \rightarrow \Ind_{B(\mbf Q_p)}^{\GL_3(\mbf Q_p)}( \psi\varepsilon \otimes \psi' \otimes \psi'')^{C^0} \rightarrow 0.
\end{equation}

Now suppose that $\rho_p$ is a generic ordinary, crystalline representation of $G_{\mbf Q_p}$ with Hodge-Tate weights $h_1 < h_2 < h_3$ and write
\begin{equation}\label{eqn:rhop-in-this-sec}
\rho_p \sim \begin{pmatrix}
\psi_1 & * & *\\
& \psi_2 & *\\
& & \psi_3
\end{pmatrix}
\end{equation}
with $\psi_i$ having weight $h_i$.  Given an element $\sigma \in S_3$ we consider
\begin{equation}\label{eqn:cont-definition}
I(\rho_p,\sigma) := \Ind_{B(\mbf Q_p)}^{\GL_3(\mbf Q_p)}(\psi_{\sigma(3)}\varepsilon^{2} \otimes \psi_{\sigma(2)}\varepsilon\otimes \psi_{\sigma(1)})^{C^0}.
\end{equation}
The unfortunate indexing is due to our choice of working with the upper triangular Borel, rather than the lower triangular one as in \cite{bh}. 

The generic hypothesis on $\rho_p$ implies that our previous discussion, in particular \eqref{eqn:ext-class23} and \eqref{eqn:ext-class12},  can be applied with $(\psi,\psi',\psi'')$ to be any ordering of the three characters $\psi_j$. Thus we have two non-split extensions
\begin{align}
0 \rightarrow I(\rho_p,\sigma(12)) \rightarrow &\Ind_{P_{\alpha_2}}^{\GL_3(\mbf Q_p)}(\psi_{\sigma(3)}\varepsilon^2 \otimes E(\psi_{\sigma(2)},\psi_{\sigma(1)}))^{C^0} \rightarrow I(\rho_p,\sigma) \rightarrow 0 \label{eqn:e2-e3-ext}\\
0 \rightarrow I(\rho_p,\sigma(23)) \rightarrow &\Ind_{P_{\alpha_1}}^{\GL_3(\mbf Q_p)}(E(\psi_{\sigma(3)}\varepsilon,\psi_{\sigma(2)}\varepsilon)\otimes \psi_{\sigma(1)})^{C^0} \rightarrow I(\rho_p,\sigma) \rightarrow 0  \label{eqn:e1-e2-ext}
\end{align}
for all $\sigma \in S_3$. To shorten notation, for each $\sigma \in S_3$, we define the middle terms as
\begin{align}
\Pi(\rho_p,\sigma)_{\alpha_2} &:= \Ind_{P_{\alpha_2}}^{\GL_3(\mbf Q_p)}(\psi_{\sigma(3)}\varepsilon^2 \otimes E(\psi_{\sigma(2)},\psi_{\sigma(1)}))^{C^0} \label{eqn:defn23}\\
\Pi(\rho_p,\sigma)_{\alpha_1} &:= \Ind_{P_{\alpha_1}}^{\GL_3(\mbf Q_p)}(E(\psi_{\sigma(3)}\varepsilon,\psi_{\sigma(2)}\varepsilon)\otimes \psi_{\sigma(1)})^{C^0} \label{eqn:defn12}.
\end{align}
We now define the representation $\Pi(\rho_p)^{\ord}$ of \cite{bh}. Notice that $\Pi(\rho_p,(12))_{\alpha_2}$ and $\Pi(\rho_p,(23))_{\alpha_1}$ both contain the common subrepresentation $I(\rho_p,(1))$ and thus we can form their amalgamated product $\Pi(\rho_p,(12))_{\alpha_2} \times_{I(\rho_p,(1))} \Pi(\rho_p,(23))_{\alpha_1}$. 
\begin{defi}\label{defi:ord}
If $\rho_p$ is generic ordinary as in \eqref{eqn:rhop-in-this-sec}, and indecomposable, then we define
\begin{equation*}
\Pi(\rho_p)^{\ord} := \begin{cases}
\Pi(\rho_p,(12))_{\alpha_2} \oplus_{I(\rho_p,(1))} \Pi(\rho_p,(23))_{\alpha_1} & \text{if $\rho_p$ is totally indecomposable},\\
 \Pi(\rho_p,(123))_{\alpha_1} \oplus \Pi(\rho_p,(23))_{\alpha_1} & \text{if $\Hom(\psi_2,\rho_p) \neq 0$},\\
 \Pi(\rho_p,(132))_{\alpha_2} \oplus \Pi(\rho_p,(12))_{\alpha_2} & \text{if $\Hom(\psi_3, \rho_p/\psi_1) \neq 0$}.
\end{cases}
\end{equation*}
\end{defi}
For comparision with \cite{bh} it is worth mentioning that each of these representations has a two-step filtration $0 \subsetneq \Fil^0\Pi(\rho_p)^{\ord} \subsetneq \Pi(\rho_p)^{\ord}$ whose associated gradeds are
\begin{equation*}
\Fil^0 \Pi(\rho_p)^{\ord} = \begin{cases}
I(\rho_p,(1)) & \text{if $\rho_p$ is totally indecomposable},\\
I(\rho_p,(12)) \oplus I(\rho_p,(1)) & \text{if $\Hom(\psi_2,\rho) \neq 0$},\\
I(\rho_p,(23)) \oplus I(\rho_p,(1)) & \text{if $\Hom(\psi_3,\rho_p/\psi_1)) \neq 0$}
\end{cases}
\end{equation*}
and
\begin{equation*}
\Pi(\rho_p)^{\ord}/\Fil^0 \Pi(\rho_p)^{\ord} = \begin{cases}
I(\rho_p,(12)) \oplus I(\rho_p,(23)) & \text{if $\rho_p$ is totally indecomposable},\\
I(\rho_p,(123)) \oplus I(\rho_p,(23)) & \text{if $\Hom(\psi_2,\rho) \neq 0$},\\
I(\rho_p,(132)) \oplus I(\rho_p,(12)) & \text{if $\Hom(\psi_3,\rho_p/\psi_1)) \neq 0$.}
\end{cases}
\end{equation*}
This completes our recollection of the representations $\Pi(\rho_p)^{\ord}$ from \cite{bh}. We next obtain an explicit description of the representations $\Pi(\rho_p,\sigma)_\alpha$ as universal unitary completions of locally analytic inductions as in \eqref{eqn:bh-realization}. We begin with a short lemma.
\begin{lemm}\label{lem:dense}
Let $P \subset G$ be a parabolic subgroup of a p-adic reductive group $G$. Let $(V_{\an}, \pi_{\an})$ be a locally analytic $L$-representation of $P$ and let $(V,\pi)$ be its universal unitary completion, which we assume to be non-zero. Suppose also that we have a $P$-equivariant injection $V_{\an} \hookrightarrow V$ (which is necessarily dense). Then we have a dense $P$-equivariant injection
$$\Ind _{P} ^G (V_{\an})^{\an} \hookrightarrow \Ind _{P} ^G (V)^{C^0}.$$  
\end{lemm}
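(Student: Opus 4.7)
My plan is to construct the map by postcomposition with $\iota\colon V_{\an}\hookrightarrow V$. Given $f\colon G\to V_{\an}$ locally analytic and satisfying $f(pg)=\pi_{\an}(p)f(g)$, the composite $\iota\circ f\colon G\to V$ is continuous and $P$-equivariant since $\iota$ is continuous and $P$-equivariant. This defines the map $\Ind_P^G(V_{\an})^{\an}\to\Ind_P^G(V)^{C^0}$, which is in fact $G$-equivariant (hence $P$-equivariant) by functoriality of induction. Injectivity is immediate from the injectivity of $\iota$: if $\iota\circ f=0$ then $f$ vanishes pointwise. These formal verifications occupy the setup and leave only the density claim.

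The substance lies in density, which I plan to reduce to a local statement using compactness of $G/P$ and the geometry of the big cell. Let $\bar N$ denote the unipotent radical of the parabolic opposite to $P$, and fix a compact open subgroup $\bar N_0\subset\bar N$. Since $G/P$ is the $L$-points of a projective variety it is compact, so one can find finitely many elements $g_1,\dots,g_r\in G$ and, refining by total disconnectedness, a partition $G/P=\bigsqcup_{i} g_i\bar N_0P/P$ into clopen subsets. Because the multiplication map $\bar N\times P\to\bar NP$ is a $p$-adic analytic isomorphism onto the open big cell, restricting to the $g_i\bar N_0$'s should produce topological isomorphisms
\begin{equation*}
\Ind_P^G(V)^{C^0}\xrightarrow{\sim}\prod_{i=1}^r C^0(g_i\bar N_0,V),\qquad \Ind_P^G(V_{\an})^{\an}\xrightarrow{\sim}\prod_{i=1}^r C^{\an}(g_i\bar N_0,V_{\an}),
\end{equation*}
under which my map is identified with the product of the natural inclusions $C^{\an}(g_i\bar N_0,V_{\an})\to C^0(g_i\bar N_0,V)$. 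Establishing this product decomposition compatibly with the locally analytic structure is the technical heart of the argument: one must check that the extension formula $f(g_i\bar np)=\pi_{\an}(p)^{-1}f_i(g_i\bar n)$ produces a locally analytic function in $(\bar n,p)$, which uses that $\pi_{\an}$ acts locally analytically on $V_{\an}$ and that the big-cell multiplication is $p$-adic analytic.

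It then remains to verify, for any compact $p$-adic manifold $X$, that $C^{\an}(X,V_{\an})$ is dense in $C^0(X,V)$; I will do this by locally constant approximation. Given $f\in C^0(X,V)$ and $\epsilon>0$, uniform continuity and total disconnectedness furnish a finite clopen partition $X=\bigsqcup_j U_j$ and points $x_j\in U_j$ with $\|f(x)-f(x_j)\|<\epsilon/2$ for $x\in U_j$; density of $V_{\an}$ in $V$ then yields $v_j\in V_{\an}$ with $\|f(x_j)-v_j\|<\epsilon/2$. The function $\sum_j v_j\mathbf{1}_{U_j}$ is locally constant with values in $V_{\an}$, hence it lies in $C^{\an}(X,V_{\an})$ and approximates $f$ uniformly within $\epsilon$. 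The hard part of the proof is thus not the final density estimate, which is elementary, but the preceding product-decomposition step, where the subtlety is the interplay between the locally analytic structure on $V_{\an}$ (a space of compact type) and the extension of functions from $g_i\bar N_0$ across the big cell to all of $G$.
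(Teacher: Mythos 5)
Your proposal is correct and follows essentially the same route as the paper's proof: decompose $G/P$ into finitely many translates of a compact open piece of the opposite unipotent, approximate the restriction of a continuous section by a locally constant $V_{\an}$-valued function using density of $V_{\an}$ in $V$, and extend back to $G$ by $P$-equivariance. The one point to make explicit is that your claimed topological isomorphism with $\prod_i C^0(g_i\bar N_0,V)$ (equivalently, the paper's estimate $|f(phg_0)-f_{\an}(phg_0)|\le\epsilon\cdot\sup_{p\in P}|\pi(p)|$) relies on $\pi$ being unitary, so that $\sup_{p\in P}|\pi(p)|$ is finite and the extension across the big cell does not distort norms.
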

\begin{proof}
Let $f\in \Ind _{P} ^G (V)^{C^0}$. We want to approximate it by functions in $\Ind _{P} ^G (V_{\an})^{\an}$. So let us take any $\epsilon >0$. Remark that we have $P \backslash G \simeq P_0 \backslash G_0$ where $P_0 \subset G_0$ are respectively maximal compact subgroups of $P,G$. Let $\overline{U}$ be the opposite unipotent and let $H$ be any sufficiently small compact open subgroup of $G$. We have a decomposition $G_0 = \coprod _{g_0} P_0 \times (H \cap \overline{U}) \times g_0$ into finitely many pieces. Let us choose an element $v_0 \in V_{\an}$ such that for any $h \in H \cap \bar{U}$ we have $|f(hg_0)-v_0| < \epsilon$. This is possible by density of $V_{\an}$ in $V$ and choosing $H$ sufficiently small ($H$ will depend on $f$). Then we define $f_{\an}(phg_0) = \pi_{\an}(p)v_0$ for $p \in P$ and $h \in H \cap \overline{U}$. We make a similar definition for other pieces in the decomposition hence obtaining a function $f_{\an}: G \ra V_{\an}$. Observe that because $\pi$ is unitary, $\pi$ is bounded on $P$ and hence $\sup _{p\in P} |\pi(p)|$ is finite. We have
\begin{equation*}
|f(phg_0) - f_{\an}(phg_0)|=|\pi(p)|\cdot |f(hg_0)-v_0| < \epsilon \cdot \sup _{p\in P} |\pi (p)|,
\end{equation*}
which allow us to conclude, as $\epsilon$ was arbitrary.
\end{proof}
Following the lemma, we can provide the following explicit descriptions of the representations $\Pi(\rho_p,\sigma)_\alpha$ in certain cases.
\begin{prop}\label{prop:identification}
Let $\rho_p$ be as in \eqref{eqn:rhop-in-this-sec} and $\sigma \in S_3$.
\begin{itemize}
\item If $\sigma(1) > \sigma(2)$ then 
\begin{equation*}
\Pi(\rho_p,\sigma)_{\alpha_2} \simeq \Ind_{B(\mbf Q_p)}^{\GL_3(\mbf Q_p)}(\psi_{\sigma(3)}\varepsilon^2 \otimes \psi_{\sigma(2)}\varepsilon z^{h_{\sigma(2)}-h_{\sigma(1)}} \otimes \psi_{\sigma(1)}z^{h_{\sigma(1)}-h_{\sigma(2)}})^{\an,\wedge}.
\end{equation*}
\item If $\sigma(2) > \sigma(3)$ then 
\begin{equation*}
\Pi(\rho_p,\sigma)_{\alpha_1} \simeq \Ind_{B(\mbf Q_p)}^{\GL_3(\mbf Q_p)}(\psi_{\sigma(3)}\varepsilon^2 z^{h_{\sigma(3)}-h_{\sigma(2)}}\otimes \psi_{\sigma(2)}\varepsilon z^{h_{\sigma(2)}-h_{\sigma(3)}} \otimes \psi_{\sigma(1)})^{\an,\wedge}.
\end{equation*}
\end{itemize}
\end{prop}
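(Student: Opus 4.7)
The plan is to bootstrap \eqref{eqn:bh-realization}, which realizes the $\GL_2$ representation $E(\psi,\psi')$ as a universal unitary completion, to the $\GL_3$ setting by inducing in stages via $P_{\alpha_i}$. I will spell out the first case ($\sigma(1)>\sigma(2)$) in detail; the second case is entirely symmetric. Set $h := h_{\sigma(1)} - h_{\sigma(2)}$, which is positive by hypothesis. Applying \eqref{eqn:bh-realization} with $\psi = \psi_{\sigma(2)}$, $\psi'=\psi_{\sigma(1)}$ identifies $E(\psi_{\sigma(2)},\psi_{\sigma(1)})$ with the universal unitary completion of
\begin{equation*}
V_{\an} := \Ind^{\GL_2(\mbf Q_p)}_{\left(\begin{smallmatrix} *&* \\ &* \end{smallmatrix}\right)}\bigl(\psi_{\sigma(2)}\varepsilon z^{-h} \otimes \psi_{\sigma(1)}z^h\bigr)^{\an}.
\end{equation*}

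Next I would inflate the tensor product $\psi_{\sigma(3)}\varepsilon^2 \otimes V_{\an}$ from the Levi $G_{\alpha_2}$ to $P_{\alpha_2}$ and invoke transitivity of locally analytic induction in stages to rewrite the principal series appearing in the statement as $\Ind^{\GL_3(\mbf Q_p)}_{P_{\alpha_2}}(\psi_{\sigma(3)}\varepsilon^2 \otimes V_{\an})^{\an}$. Since $\psi_{\sigma(3)}\varepsilon^2$ is unitary, the universal unitary completion of $\psi_{\sigma(3)}\varepsilon^2 \otimes V_{\an}$ is $\psi_{\sigma(3)}\varepsilon^2 \otimes E(\psi_{\sigma(2)},\psi_{\sigma(1)})$; Lemma \ref{lem:dense} then produces a dense $\GL_3(\mbf Q_p)$-equivariant injection from the locally analytic induction above into $\Pi(\rho_p,\sigma)_{\alpha_2}$, which is itself a unitary Banach representation since continuous parabolic induction preserves unitarity. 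The universal property of the universal unitary completion now delivers a continuous $\GL_3(\mbf Q_p)$-equivariant map $\Ind_{B(\mbf Q_p)}^{\GL_3(\mbf Q_p)}(\chi)^{\an,\wedge} \rightarrow \Pi(\rho_p,\sigma)_{\alpha_2}$, where $\chi$ is the character in the first displayed formula.

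To finish, I would check that this map is an isomorphism by verifying that $\Pi(\rho_p,\sigma)_{\alpha_2}$ satisfies the universal property. Given any continuous $G$-equivariant map $\phi: \Ind_{B(\mbf Q_p)}^{\GL_3(\mbf Q_p)}(\chi)^{\an} \to Y$ into a unitary Banach representation $Y$, I would restrict $\phi$ along the natural $P_{\alpha_2}$-equivariant embedding of $\psi_{\sigma(3)}\varepsilon^2 \otimes V_{\an}$ (obtained, as in the proof of Lemma \ref{lem:dense}, by looking at functions supported on the big cell) to obtain a continuous $P_{\alpha_2}$-equivariant map into $Y$; the universal property of $E(\psi_{\sigma(2)},\psi_{\sigma(1)})$ extends this to $\psi_{\sigma(3)}\varepsilon^2 \otimes E(\psi_{\sigma(2)},\psi_{\sigma(1)})$, and continuous Frobenius reciprocity for $\Ind^G_{P_{\alpha_2}}(-)^{C^0}$ then yields an extension of $\phi$ through $\Pi(\rho_p,\sigma)_{\alpha_2}$.

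The main obstacle is this last step: making rigorous the assertion that the functor $\Ind^{\GL_3(\mbf Q_p)}_{P_{\alpha_i}}(-)^{C^0}$ intertwines universal unitary completions of locally analytic $P_{\alpha_i}$-representations with the corresponding universal unitary completion of the induced representation. The cleanest route is probably via the continuous Frobenius reciprocity sketched above, but one needs to be careful that the adjunction holds in sufficient generality for non-admissible Banach representations. The second case of the proposition is handled identically, with $h = h_{\sigma(2)} - h_{\sigma(3)}$ and $E(\psi_{\sigma(3)}\varepsilon,\psi_{\sigma(2)}\varepsilon)$ replacing $E(\psi_{\sigma(2)},\psi_{\sigma(1)})$, and with a harmless rewriting $\varepsilon \cdot \varepsilon = \varepsilon^2$ to match the displayed character.
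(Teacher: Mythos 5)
The first part of your argument --- induction in stages through $P_{\alpha_2}$, the identification of $E(\psi_{\sigma(2)},\psi_{\sigma(1)})$ via \eqref{eqn:bh-realization}, and the use of Lemma \ref{lem:dense} to produce a dense $\GL_3(\mbf Q_p)$-equivariant injection $I \hookrightarrow \Pi(\rho_p,\sigma)_{\alpha_2}$ and hence a non-zero map $I^{\wedge} \rightarrow \Pi(\rho_p,\sigma)_{\alpha_2}$ --- is exactly the paper's argument. The divergence, and the gap, is in how you finish. You propose to verify that $\Pi(\rho_p,\sigma)_{\alpha_2}$ itself satisfies the universal property of the universal unitary completion, and the key step you invoke is ``continuous Frobenius reciprocity for $\Ind_{P_{\alpha_2}}^{G}(-)^{C^0}$'' to pass from a $P_{\alpha_2}$-equivariant map $\psi_{\sigma(3)}\varepsilon^2\otimes E(\psi_{\sigma(2)},\psi_{\sigma(1)}) \rightarrow Y$ to a $G$-equivariant map out of the continuous induction into $Y$. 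This is the wrong direction of the adjunction: Frobenius reciprocity for $\Ind_P^G$ describes maps \emph{into} the induction ($\Hom_G(W,\Ind_P^G U)\simeq \Hom_P(W,U)$ via evaluation at the identity), not maps out of it. A left-adjoint statement of the kind you need is essentially a second-adjunction/Jacquet-module phenomenon and does not hold for restriction to $P$; there is also no obvious $P_{\alpha_2}$-equivariant \emph{embedding} of $\psi_{\sigma(3)}\varepsilon^2\otimes V_{\an}$ into $I$ along which to restrict $\phi$ (functions supported on the big cell are not $G$-stable, and extension by zero is not continuous). You flag this step yourself as the main obstacle, but as written it is not a technicality to be smoothed over --- it is the missing content, and without it you have neither surjectivity nor injectivity of $I^\wedge\rightarrow \Pi(\rho_p,\sigma)_{\alpha_2}$.

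The paper avoids the universal-property characterization of the target entirely. Surjectivity is free: both $I^{\wedge}$ and $\Pi(\rho_p,\sigma)_{\alpha_2}$ are admissible Banach representations, so a continuous map with dense image (which this is, by Lemma \ref{lem:dense}) is automatically surjective, and one only needs injectivity. For that, pull back the unit ball of $\Pi(\rho_p,\sigma)_{\alpha_2}$ to get an open, separated, $\GL_3(\mbf Q_p)$-stable lattice $\Lambda_1\subset I$ with $\Pi(\rho_p,\sigma)_{\alpha_2}\simeq \varprojlim_n \Lambda_1/p^n\Lambda_1\otimes\mbf Q_p$; since $I^{\wedge}\neq 0$ there is a minimal such lattice $\Lambda_0$ computing $I^{\wedge}$, and $\Lambda_0\subset\Lambda_1$ with both separated forces $I^{\wedge}\hookrightarrow \Pi(\rho_p,\sigma)_{\alpha_2}$. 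I would replace your final paragraph with this lattice argument.
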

\begin{proof}
The proofs are symmetric so we only cover the first isomorphism. Assume that $\sigma(1) > \sigma(2)$. We write
\begin{align*}
I &= \Ind_{B(\mbf Q_p)}^{\GL_3(\mbf Q_p)}(\psi_{\sigma(3)}\varepsilon^2 \otimes \psi_{\sigma(2)}\varepsilon z^{h_{\sigma(2)}-h_{\sigma(1)}}\otimes \psi_{\sigma(1)}z^{h_{\sigma(1)}-h_{\sigma(2)}})^{\an}\\
 &\simeq \Ind_{P_{\alpha_2}}^{\GL_3(\mbf Q_p)}\left(\psi_{\sigma(3)}\varepsilon^2 \otimes \Ind_{\left(\begin{smallmatrix} * & * \\ & * \end{smallmatrix}\right)}^{\GL_2(\mbf Q_p)}(\psi_{\sigma(2)}\varepsilon z^{h_{\sigma(2)}-h_{\sigma(1)}} \otimes  \psi_{\sigma(1)}z^{h_{\sigma(1)}-h_{\sigma(2)}})^{\an}\right)^{\an},
\end{align*}
the tensor representation in the second line being seen as a representation of $P_{\alpha_2}$ via inflation $P_{\alpha_2} \rightarrow G_{\alpha_2}$. Since $\sigma(1) > \sigma(2)$ the character $\psi_{\sigma(1)}\psi_{\sigma(2)}^{-1}$ has Hodge-Tate weight $h_{\sigma(1)} - h_{\sigma(2)} > 0$. Thus we combine \eqref{eqn:bh-realization}, \eqref{eqn:defn23} and Lemma \ref{lem:dense} to deduce that there is a dense inclusion $I \hookrightarrow \Pi(\rho_p,\sigma)_{\alpha_2}$. Passing to the universal unitary completion induces a non-zero map $I^{\wedge} \rightarrow \Pi(\rho_p,\sigma)_{\alpha_2}$. Since both representations are admissible, to show that it is an isomorphism we only need to show that it is injective.

Since $I$ is dense in $\Pi(\rho_p,\sigma)_{\alpha_2}$ we can pull back the unit ball and obtain an open, separated, $\GL_3(\mbf Q_p)$-stable lattice $\Lambda_1 \subset I$ such that $\Pi(\rho_p,\sigma)_{\alpha_2} \simeq \varprojlim_n \Lambda_1/p^n\Lambda_1 \otimes \mbf Q_p$. Since $I^{\wedge}\neq 0$ we also obtain a minimal $\GL_3(\mbf Q_p)$-stable lattice $\Lambda_0$ such that $I^\wedge \simeq \varprojlim_n \Lambda_0/p^n\Lambda_0 \otimes \mbf Q_p$. By minimality $\Lambda_0 \subset \Lambda_1$. Since each is separated, we have
\begin{equation*}
I^\wedge \simeq \varprojlim_n \Lambda_0/p^n\Lambda_0 \otimes_{\mbf Z_p} \mbf Q_p \hookrightarrow \varprojlim_n \Lambda_1/p^n\Lambda_1 \otimes_{\mbf Z_p} \mbf Q_p \simeq \Pi(\rho_p,\sigma)_{\alpha_2}.
\end{equation*}
This completes the proof.
\end{proof}

\subsection{On a conjecture of Breuil and Herzig}\label{subsec:breuil-herzig}

We now go back to our global setting. We suppose that $z = (\chi_z,\lambda_z) \in X(\overline{\mbf Q}_p)$ is a generic ordinary point such that $\rho_{z,v}$ is indecomposable for each place $v \in \Sigma_p$. Using the previous section we can define a representation
\begin{equation*}
\Pi(\rho_{z,p})^{\ord} := \widehat{\bigotimes}_{v \in \Sigma_p}\Pi(\rho_{z,v})^{\ord}.
\end{equation*}
Note that this only depends only on $\rho_{z}$, not on $z$ itself (i.e. on $\lambda_z$, not $\chi_z$). We use $\widehat H^0(K^p)^{\lambda_z,\ord}_L$ to denote the ordinary part of the $G_{\Sigma_p}$-representation $\widehat H^0(K^p)^{\lambda_z}_L$. Breuil and Herzig have made the following conjecture\footnote{The conjecture of \cite{bh} is more general, dealing with any generic ordinary situation, but we have only defined the representations $\Pi(\rho_{z,p})^{\ord}$ in the case that $\rho_z$ is indecomposable above $p$.} regarding this subspace.

\begin{conj}[{\cite[Conjecture 4.2.2]{bh}}]\label{conj:bh}
Suppose that $z \in X(L)$ is a generic ordinary point and $\rho_{z,v}$ is  indecomposable at each place $v \in \Sigma_p$. Then there exists an integer $d\geq 1$ and a $G_{\Sigma_p}$-equivariant isomorphism 
\begin{equation*}
\left(\Pi(\rho_{z,p})^{\ord}\right)^{\oplus d} \simeq \widehat H^0(K^p)^{\lambda_z,\ord}_{L}.
\end{equation*}
\end{conj}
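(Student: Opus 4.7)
The plan is to derive the conjectured equality in three coordinated steps: first fix the integer $d$ intrinsically, second upgrade Theorems A and B to produce $d$ independent copies of $\Pi(\rho_{z,p})^{\ord}$ inside $\widehat H^0(K^p)^{\lambda_z}_L$, and third prove that the resulting embedding exhausts the ordinary part of completed cohomology.

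To begin, I would define the multiplicity by $d := \dim_L J_{B_{\Sigma_p}}^{\chi_{z_{\nc}}}(\widehat H^0(K^p)^{\lambda_z}_{L,\an})$, where $z_{\nc}$ is the non-critical ordinary refinement of $\rho_z$. Since $z_{\nc}$ is non-critical by Lemma \ref{lemm:simple-trans-non-critical} and $\rho_z$ is irreducible, Proposition \ref{prop:jones-bgg} combined with Theorem \ref{thm:chenevier-classicality} identifies $d$ with the classical automorphic multiplicity of $\pi_z$ in $\mc A_{k_z}(G,K^p)$, so $d \geq 1$. Because $z_{\nc}$ satisfies $(\dagger)$, each non-zero element of the $\chi_{z_{\nc}}$-eigenspace of the Jacquet module yields, via Corollary \ref{coro:dagger-map} together with the injectivity statement forthcoming in Proposition \ref{prop:multiplicity}(2), an independent embedding $I_{C^0}(z_{\nc}) \hookrightarrow \widehat H^0(K^p)^{\lambda_z}_L$. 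The same mechanism at the companion points $z^{(\sigma)}_{\nc}$ associated to each refinement $\sigma$ compatible with the indecomposability pattern produces $d$ independent embeddings of the companion continuous principal series.

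Next I would glue these embeddings to realize the entire amalgamated representation of Definition \ref{defi:ord}. Proposition \ref{prop:identification} identifies each building block $\Pi(\rho_{z,v},\sigma)_\alpha$ with the universal unitary completion of an explicit locally analytic principal series, and the completed tensor over $v \in \Sigma_p$ factorizes accordingly. After extending the technique of Theorems A and B to produce all the local factors with their full multiplicity $d$, one amalgamates the two branches of Definition \ref{defi:ord} along the common subrepresentation $I(\rho_p,(1))$ coming from the non-critical point. The compatibility of the two gluing maps is forced by functoriality and uniqueness in Theorem \ref{theo:analytic-maps}: the adjunction isomorphism is canonical on the single Jacquet eigenspace at $\chi_{z_{\nc}}$, so both extensions must restrict to the same embedding of $I(\rho_p,(1))$. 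This yields the injection $(\Pi(\rho_{z,p})^{\ord})^{\oplus d} \hookrightarrow \widehat H^0(K^p)^{\lambda_z,\ord}_L$.

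The third and decisive step, and the main obstacle, is the upper bound $\widehat H^0(K^p)^{\lambda_z,\ord}_L \subseteq (\Pi(\rho_{z,p})^{\ord})^{\oplus d}$. One must show that every unitary continuous principal series that embeds into the ordinary part is one of the $I(\rho_p,\sigma)$ of Definition \ref{defi:ord}, appears with multiplicity at most $d$, and that the only non-trivial extensions between them are the ones prescribed by $\Pi(\rho_{z,p})^{\ord}$. Using Theorem \ref{theo:analytic-maps}, or Breuil's variant \cite[Th\'eor\`eme 4.3]{br} (see Remark \ref{rema:breuil}) at the bad companion points, one converts each such $\Hom$ space into a Jacquet-module eigenspace and interprets it as sections of the eigenvariety $X_{K^p}$ near the fiber $\lambda = \lambda_z$; the combination of Proposition \ref{prop:crystalline-interpolation} and Theorem \ref{theo:consant-weights} then forces the only possible eigensystems to arise from Weyl conjugates of $z_{\nc}$ compatible with the ordinary filtrations on the $\rho_{z,v}$, controlling both the list of constituents and their multiplicities. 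The truly hard part is controlling the extension classes: proving that every extension of principal series that occurs in $\widehat H^0(K^p)^{\lambda_z,\ord}_L$ is the one Galois-prescribed by $\Pi(\rho_{z,v},\sigma)_\alpha$ is an $\Ext^1$-rigidity statement strictly beyond Lemma \ref{lem:localisation}(b); it would require either new inputs from Hida-family deformation theory or the still-conjectural $p$-adic local Langlands correspondence for $\GL_3(\mbf Q_p)$. This is precisely the gap between the conjecture and the subobject statement already established by Theorems A and B.
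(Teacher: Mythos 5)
This statement is a \emph{conjecture} in the paper (it is Breuil--Herzig's Conjecture 4.2.2, restated as Conjecture \ref{conj:bh}); the paper does not prove it and does not claim to. What the paper establishes are strictly weaker results: Theorem \ref{thm:we-did-it} gives a closed embedding of a \emph{single} copy of $\Pi(\rho_{z,p})^{\ord}$ under the stronger hypothesis that $\rho_{z,v}$ is \emph{totally} indecomposable at every $v \in \Sigma_p$, and Theorem \ref{theo:F^+=Q} handles the merely indecomposable case only when $F^+ = \mbf Q$. So any complete proof of the conjecture would go far beyond the paper, and your proposal does not close that distance.

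Concretely, there are three gaps. First, even the single-copy embedding is not available under the conjecture's hypotheses: for general $F^+$ with $\rho_{z,v}$ indecomposable but not totally indecomposable at some place, the paper explicitly notes that the points satisfying $(\dagger)$ only produce ``some interesting piece'' of $\Pi(\rho_{z,p})^{\ord}$, ``far from all of $\Pi(\rho_{z,p})^{\ord}$ in general''; your step two, which asserts one can ``extend the technique of Theorems A and B to produce all the local factors,'' assumes exactly what is missing (the relevant companion points at the non-totally-indecomposable places need not exist or be controlled). Second, the multiplicity statement is not justified: setting $d = \dim J_{B_{\Sigma_p}}^{\chi_{z_{\nc}}}(\widehat H^0(K^p)^{\lambda_z}_{L,\an})$ and claiming that the $d$-dimensional $\Hom$ space from Theorem \ref{theo:analytic-maps} yields $d$ \emph{independent} embeddings whose images sit in direct sum, and that the two amalgamation branches of Definition \ref{defi:ord} glue compatibly across all $d$ copies, is a multiplicity/socle rigidity assertion that none of Proposition \ref{prop:multiplicity}, Corollary \ref{coro:dagger-map}, or the adjunction formula provides. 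Third, as you yourself concede, the exhaustion step $\widehat H^0(K^p)^{\lambda_z,\ord}_L \subseteq (\Pi(\rho_{z,p})^{\ord})^{\oplus d}$ --- ruling out extra constituents and, crucially, extra or different extension classes in the ordinary part --- is an $\Ext^1$-rigidity statement with no proof mechanism in the paper (Lemma \ref{lem:localisation}(b) is a vanishing statement about $\mf b$-cohomology of the ambient space, not about extensions occurring inside the eigenspace). Since that step is the actual content of the conjecture, the proposal is a plausible strategy sketch but not a proof, and it should not be presented as deriving the conjectured isomorphism.
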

We further remark that the integer $d$ should only depend on $K^p$ and $\lambda_z$ (e.g. because the right hand side depends only on those things).

There are two differences between our normalizations and those of \cite{bh}. First, we have used the upper triangular Borels throughout. Second, the representation defined in \cite[\S 3]{bh}, temporarily denoted by $\Pi(\rho_p)^{\ord,\operatorname{BH}}$, differs from ours by the equation
\begin{equation*}
\Pi(\rho_p)^{\ord,\operatorname{BH}} \otimes \varepsilon^2 \circ \det = \Pi(\rho_p)^{\ord}.
\end{equation*}
With these modifications pointed out, one can easily see that what we have written is the same as \cite[Conjecture 4.2.2]{bh}. The following two theorems are strong evidence for their conjecture.
\begin{theo}\label{thm:we-did-it}
Suppose that $z \in X_{\cl}(L)$ is a generic ordinary point and $\rho_{z,v}$ is totally indecomposable at each place $v \in \Sigma_p$. Then there exists a $G_{\Sigma_p}$-equivariant closed embedding
\begin{equation*}
\Pi(\rho_{z,p})^{\ord} \hookrightarrow \widehat H^0(K^p)_L^{\lambda_z,\ord}.
\end{equation*}
\end{theo}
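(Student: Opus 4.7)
The plan is to produce the embedding by combining the adjunction formula (Theorem \ref{theo:analytic-maps}) with the explicit identification of Breuil--Herzig's principal series as universal unitary completions (Proposition \ref{prop:identification}), using as input the abundance of non-bad classical points supplied by the total indecomposability hypothesis.

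The first step is to locate the relevant non-bad points on $X = X_{K^p}$. For each tuple $\sigma = (\sigma_v)_{v \in \Sigma_p} \in \{(1),(12),(23)\}^{\Sigma_p}$, consider the classical refined point $z^{(\sigma)} := z_{\nc}^{(\sigma)}$, which shares the Hecke eigensystem $\lambda_z$. Since each $\rho_{z,v}$ is totally indecomposable, Lemma \ref{lemm:simple-trans-non-critical} shows $z^{(\sigma)}$ is non-critical for every such $\sigma$, and Corollary \ref{coro:simple-pts-not-bad} then gives that $z^{(\sigma)}$ is not bad. Because each point is non-critical (so trivially satisfies $(\dagger)$ with $\tau = (1)$), formula \eqref{eqn:char-of-xgood} simplifies to $\xi_{s_{\alpha_\tau}\cdot z^{(\sigma)}} = \xi_{z^{(\sigma)}}$, and Corollary \ref{coro:dagger-map} provides a non-zero $G_{\Sigma_p}$-equivariant map
\[
\varphi_\sigma \colon I_{\an}(z^{(\sigma)})^{\wedge} \longrightarrow \widehat H^0(K^p)^{\lambda_z}_L.
\]

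The second step is to identify the source $I_{\an}(z^{(\sigma)})^{\wedge}$ with the correct building block of Definition \ref{defi:ord}. Because Borel, torus and Iwahori all factor over $\Sigma_p$, the completion $I_{\an}(z^{(\sigma)})^{\wedge}$ decomposes as the completed tensor product over $v \in \Sigma_p$ of local unitary inductions, and for each factor the explicit form of $\xi_{z^{(\sigma)}}$ combined with Proposition \ref{prop:identification} identifies it with $I(\rho_{z,v},(1))$, $\Pi(\rho_{z,v},(12))_{\alpha_2}$, or $\Pi(\rho_{z,v},(23))_{\alpha_1}$ according to whether $\sigma_v$ is $(1)$, $(12)$ or $(23)$. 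Thus $\varphi_\sigma$ yields an explicit map out of $\widehat{\bigotimes}_v \Pi(\rho_{z,v},\sigma_v)_*$ into $\widehat H^0(K^p)^{\lambda_z}_L$.

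The third step is the gluing. When $\sigma,\sigma'$ differ at a single place $v_0$ with $\sigma_{v_0}=(1)$ and $\sigma'_{v_0}\in\{(12),(23)\}$, the natural inclusion of principal series coming from \eqref{eqn:e2-e3-ext} and \eqref{eqn:e1-e2-ext} embeds the source of $\varphi_\sigma$ into the source of $\varphi_{\sigma'}$. One must check that $\varphi_{\sigma'}$ restricted to this common subspace equals $\varphi_\sigma$. Via the isomorphism in Theorem \ref{theo:analytic-maps}, each $\varphi_\sigma$ corresponds to a $T_{\Sigma_p}$-eigenvector in $J_{B_{\Sigma_p}}(\widehat H^0(K^p)^{\lambda_z}_{L,\an})$; the compatibility amounts to showing that the eigenvector for $\sigma'$ maps to that for $\sigma$ under the transition dictated by the principal-series extension. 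At classical weight $k$ this follows from the comparison Proposition \ref{prop:comparision} together with Proposition \ref{prop:jones-bgg}: all the relevant eigenvectors descend from a single classical automorphic form $f \in F(W_k^\vee,K^p)^{(\lambda_z)}$ for the irreducible automorphic representation $\pi$ underlying $z$, and the inclusion $j_k$ is canonical, so the compatibility is forced.

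Once the compatibilities are established, the universal property of the amalgamated sum (applied place by place in the sense of Definition \ref{defi:ord}, then extended via the completed tensor product over $\Sigma_p$) assembles $\{\varphi_\sigma\}_\sigma$ into a single non-zero $G_{\Sigma_p}$-equivariant map
\[
\Phi \colon \Pi(\rho_{z,p})^{\ord} = \widehat{\bigotimes}_{v \in \Sigma_p} \Pi(\rho_{z,v})^{\ord} \longrightarrow \widehat H^0(K^p)^{\lambda_z,\ord}_L.
\]
Injectivity of $\Phi$ is obtained from the irreducibility of the socle $I(\rho_{z,p},(1))$ (which is a principal series for a character on which non-badness ensures Theorem \ref{theo:analytic-maps} gives an adjunction) combined with the general fact (to be proved in Proposition \ref{prop:multiplicity}, cf.~the remark after Corollary \ref{coro:dagger-map}) that any non-zero map out of such a principal series into $\widehat H^0(K^p)^{\lambda_z}_L$ is injective; the admissibility of both sides then upgrades injectivity to a closed embedding. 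The main obstacle is the gluing step: verifying that the Jacquet-module $T$-eigenvectors associated with the $3^{|\Sigma_p|}$ refinements are coherent along every transition map in the diagram defining $\Pi(\rho_{z,p})^{\ord}$, so that the local Breuil--Herzig amalgamations actually lift to global amalgamations inside completed cohomology.
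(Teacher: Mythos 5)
Your overall strategy coincides with the paper's: work with the $3^{|\Sigma_p|}$ simple refined points $z_{\nc}^{(\sigma)}$, observe via Lemma \ref{lemm:simple-trans-non-critical} and Corollary \ref{coro:simple-pts-not-bad} that they are non-critical and not bad, apply the adjunction formula to each, identify the resulting unitary completions with the Breuil--Herzig building blocks via Proposition \ref{prop:identification}, amalgamate, and deduce injectivity from Proposition \ref{prop:multiplicity}(2) and irreducibility of the common socle $I_{C^0}(z_{\nc}^{(1)})$. The one place where your argument has a genuine gap is exactly the step you flag as the main obstacle: the gluing.

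As written, your compatibility check does not go through. You choose the maps $\varphi_\sigma$ independently (each is merely \emph{some} non-zero element of a Hom-space that may have dimension $>1$, since the classical eigenspace $F(W_k^\vee,K^p)^{(\lambda_z)}$ can have multiplicity $d>1$), and then assert that $\varphi_{\sigma'}|_{I_{\an}(z^{(\sigma)})^\wedge} = \varphi_\sigma$ is ``forced'' because the corresponding Jacquet-module eigenvectors descend from a single classical form. But the eigenvectors attached to different refinements $\sigma$ lie in \emph{different} $T_{\Sigma_p}$-eigenspaces, and the canonicity of $j_k$ says nothing about how the adjunction of Theorem \ref{theo:analytic-maps} intertwines the inclusion of principal series with restriction of morphisms; at the very least the $\varphi_\sigma$ would need to be rescaled, and proving that a rescaling suffices already requires knowing that the restriction map
\begin{equation*}
\Hom_{G_{\Sigma_p}}\bigl(I_{\an}(z^{(\sigma')})^{\wedge}, \widehat H^0(K^p)^{\lambda_z}_{L}\bigr) \longrightarrow \Hom_{G_{\Sigma_p}}\bigl(I_{C^0}(z_{\nc}^{(1)}), \widehat H^0(K^p)^{\lambda_z}_{L}\bigr)
\end{equation*}
is injective. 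The paper's proof sidesteps the compatibility check entirely by proving this restriction map is an \emph{isomorphism} (Proposition \ref{prop:multiplicity}(1) and (3): injectivity from non-badness of the quotient characters $\xi_{s_{\alpha_{\sigma'^{-1}\sigma}}\cdot z}$ via Corollary \ref{cor:bruhat-relations-inductions}, surjectivity from the classicality dimension count of Theorem \ref{thm:chenevier-classicality} applied at both refinements). Once each restriction map to the socle is an isomorphism, the universal property of the amalgamated product identifies $\Hom(\Pi(\rho_{z,p})^{\ord},\widehat H^0(K^p)^{\lambda_z}_L)$ with $\Hom(I_{C^0}(z_{\nc}^{(1)}),\widehat H^0(K^p)^{\lambda_z}_L)$, and one simply starts from a single non-zero map on the socle and extends; no coherence of independently chosen lifts ever needs to be verified. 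Replacing your eigenvector argument by this Hom-space isomorphism (which you essentially already cite as Proposition \ref{prop:multiplicity}, but only invoke for injectivity) closes the gap.
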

We note that this result was independently obtained by Breuil and Herzig in \cite[Theorem 4.4.7]{bh} (following the original release of their preprint) under the additional assumption that the reduction mod $p$ of the representation $\rho_z$ is also totally indecomposable above $p$. Their method of proof is completely different than ours. The following result is not handled under any hypotheses in \cite{bh}.

\begin{theo}\label{theo:F^+=Q}
Suppose that $F^+ = \mbf Q$ and that $z \in X_{\cl}(L)$ is a generic ordinary point such that $\rho_{z,v}$ is indecomposable, but not necessarily totally indecomposable for the unique $p$-adic place $v$. Then, there exists a $\GL_3(\mbf Q_p)$-equivariant closed embedding 
\begin{equation*}
\Pi(\rho_{z,v})^{\ord} \hookrightarrow \widehat H^0(K^p)_L^{\lambda_z,\ord}.
\end{equation*}
\end{theo}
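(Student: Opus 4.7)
The plan is to realize $\Pi(\rho_{z,v})^{\ord}$ as a direct sum of two parabolic inductions, identify each summand as the universal unitary completion of a locally analytic principal series attached to a point of $X$ satisfying $(\dagger)$, and then invoke Corollary \ref{coro:dagger-map} to embed both summands into $\widehat H^0(K^p)_L^{\lambda_z}$. Without loss of generality assume $\Hom(\psi_2,\rho_{z,v}) \neq 0$ (the other case is entirely symmetric). Writing $\rho_{z,v}$ in upper triangular form \eqref{eqn:rhop-in-this-sec}, Definition \ref{defi:ord} gives
\[
\Pi(\rho_{z,v})^{\ord} = \Pi(\rho_{z,v},(123))_{\alpha_1}\ \oplus\ \Pi(\rho_{z,v},(23))_{\alpha_1},
\]
and by Proposition \ref{prop:identification} each summand is the universal unitary completion of an explicit locally analytic induction from $B_{\Sigma_p}$.

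Next I would single out the two classical generic ordinary points $z_{\nc}^{(23)}$ and $z_{\nc}^{(123)}$ associated to $\rho_z$. By Lemma \ref{lemm:simple-trans-non-critical}(2), the first is non-critical and the second is critical of weight-type $\tau = (12)$, with no other critical refinement in our hypothesis. Both points satisfy $(\dagger)$ of Definition \ref{defn:dagger}: $z_{\nc}^{(23)}$ because $(23)$ is simple, and $z_{\nc}^{(123)}$ because clause (iv) of $(\dagger)$ is vacuous for a critical point. Proposition \ref{prop:indec-produce-comp-pt} furnishes a unique companion point $s_{\alpha_1}\cdot z_{\nc}^{(123)} \in X(L)$, and Proposition \ref{prop:good-points} shows that both $s_{\alpha_\tau}\cdot z_{\nc}^{(23)} = z_{\nc}^{(23)}$ and $s_{\alpha_1}\cdot z_{\nc}^{(123)}$ are \emph{not bad}.

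I would then apply Corollary \ref{coro:dagger-map} to each of $z_{\nc}^{(23)}$ and $z_{\nc}^{(123)}$. A direct calculation of the character $\xi_{s_{\alpha_\tau}\cdot z}$ from \eqref{eqn:char-of-xgood}, compared to Proposition \ref{prop:identification}, identifies $I_{\an}(z_{\nc}^{(23)})^{\wedge}$ with $\Pi(\rho_{z,v},(23))_{\alpha_1}$ and $I_{\an}(z_{\nc}^{(123)})^{\wedge}$ with $\Pi(\rho_{z,v},(123))_{\alpha_1}$. This produces two non-zero $\GL_3(\mbf Q_p)$-equivariant maps into $\widehat H^0(K^p)_L^{\lambda_z}$, each of which is injective by the multiplicity one statement (to be established later in the paper). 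The sum of the two embeddings is direct inside $\widehat H^0(K^p)_L^{\lambda_z}$ because the four irreducible Jordan--Hölder constituents $I(\rho_{z,v},\sigma)$ for $\sigma \in \{(1),(12),(23),(123)\}$ are pairwise non-isomorphic topologically irreducible unitary principal series under the generic ordinary hypothesis; any common subrepresentation of the two images would have to embed into both summands, forcing it to be zero.

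The main conceptual obstacle, and the essential departure from the totally indecomposable case of Theorem \ref{thm:we-did-it}, is producing the summand $\Pi(\rho_{z,v},(123))_{\alpha_1}$: the corresponding refinement $(123)$ is not simple, and the associated classical point $z_{\nc}^{(123)}$ is critical, so the argument used in Theorem A does not apply directly. The key input that makes Theorem B work is precisely the machinery developed in Section \ref{sec:com-bad}: indecomposability of $\rho_{z,v}$ forces a unique companion point on the eigenvariety for this critical refinement, and Proposition \ref{prop:good-points} guarantees that this companion point is not bad, so that Theorem \ref{theo:analytic-maps} can be applied to it. It is exactly this additional, non-classical companion point that supplies the extra principal series constituent predicted by the Breuil--Herzig conjecture in the indecomposable (non-totally indecomposable) case.
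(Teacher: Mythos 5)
Your proposal is correct and follows essentially the same route as the paper's own proof: decompose $\Pi(\rho_{z,v})^{\ord}$ via Definition \ref{defi:ord} into the two summands $I_{\an}(z_{\nc}^{(\tau^\perp)})^{\wedge}\oplus I_{\an}(z_{\nc}^{(\tau\tau^\perp)})^{\wedge}$ (your $(23)$ and $(123)$ in the case $\Hom(\psi_2,\rho_{z,v})\neq 0$), use $(\dagger)$, Proposition \ref{prop:good-points} and Corollary \ref{coro:dagger-map} to map each into $\widehat H^0(K^p)^{\lambda_z}_L$, and conclude injectivity of the sum from Proposition \ref{prop:multiplicity}(2) together with the disjointness of the Jordan--H\"older constituents. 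The only difference is cosmetic: the paper treats both symmetric cases at once using $\tau$, $\tau^\perp$.
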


Let us remark on the hypothesis that $F^+ = \mbf Q$ in the second theorem. In general, if $z \in X_{\cl}(L)$ as in Theorem \ref{thm:we-did-it}, but $\rho_{z,v}$ is indecomposable at some place above $v \in \Sigma_p$ (e.g. $z$ satisfies $(\dagger)$), we can construct some interesting piece of $\Pi(\rho_{z,p})^{\ord}$ inside the completed cohomology. This piece is sufficient when $F^+ = \mbf Q$ but far from all of $\Pi(\rho_{z,p})^{\ord}$ in general.

In order to prepare the proof we will give some intermediate results on the principal series arising from points $z$ satisfying $(\dagger)$. For the reader interested in just the final steps of the proof of Theorem \ref{thm:we-did-it} see page \pageref{proof:big-theorem}, and page \pageref{proof:thm-special-case} for Theorem \ref{theo:F^+=Q}.

For the remainder of this section we work with points $z \in X_{\cl}(L)$ of weight-type $\tau$ satisfying $(\dagger)$. We recall from Proposition \ref{prop:good-points} that we have constructed a point $s_{\alpha_\tau}\cdot z$ which is either $z$ (if $z$ is non-critical) or the unique companion point associated to $z$ (if $z$ is critical). Explicitly, if $z = z_{\nc}^{(\sigma)}$ then $s_{\alpha_\tau}\cdot z = \left(\xi_{s_{\alpha_\tau}\cdot z}^{(13)}\delta_{B_{\Sigma_p}},\lambda_z\right)$ where\begin{equation}\label{eqn:xi}
\xi_{s_{\alpha_\tau}\cdot z,v} = \psi_{\sigma_v(3),v}z^{h_{\sigma_v(3),v}-h_{\tau_v(3),v}}\varepsilon^2 \otimes \psi_{\sigma_v(2),v}z^{h_{\sigma_v(2),v}-h_{\tau_v(2),v}}\varepsilon \otimes \psi_{\sigma_v(1)}z^{h_{\sigma_v(1),v}-h_{\tau_v(1),v}}.
\end{equation}
We first use results from the previous section to make explicit the locally analytic induction $\Ind_{B_{\Sigma_p}}^{G_{\Sigma_p}}(\xi_{s_{\alpha_\tau}\cdot z})^{\an}$. 

If $\tau \in S_3$ then let $\tau^\perp = (13)\tau(13)$. We note the following facts. Let $z = z_{\nc}^{(\sigma)}$ satisfy $(\dagger)$. Then:
\begin{itemize}
\item $s_{\alpha_\tau}\cdot z$ is $\alpha_{\tau^\perp}$-dominant.
\item If $z$ is critical then $\tau_v \sigma_v = (1)$ or $\tau_v\sigma_v = \tau_v^{\perp}$ (see Lemma \ref{lemm:simple-trans-non-critical}).
\end{itemize}
A short calculation from the definition \eqref{eqn:cont-definition}, and the previous bullet point, shows that if $z$ is critical and $\tau_v \neq \sigma_v$ then $\xi_{s_{\alpha_{\tau^\perp}}s_{\alpha_\tau}\cdot z, v}$ is unitary and
\begin{equation}\label{eqn:short-calc}
I(\rho_{z,v},\sigma_v) = \Ind_{B(\mbf Q_p)}^{\GL_3(\mbf Q_p)}(\xi_{s_{\alpha_{\tau^\perp}}s_{\alpha_\tau}\cdot z, v})^{C^0} = \Ind_{B(\mbf Q_p)}^{\GL_3(\mbf Q_p)}(\xi_{s_{\alpha_{\tau^\perp}}s_{\alpha_\tau}\cdot z, v})^{\an, \wedge}.
\end{equation}

\begin{prop}\label{prop:relations-each-place}
Suppose that $z=z_{\nc}^{(\sigma)}$ is of weight-type $\tau$ and satisfies $(\dagger)$. Then at each place $v$ we have
\begin{itemize}
\item If $\sigma_v = \tau_v$ then $\xi_{s_{\alpha_\tau}\cdot z,v}$ is unitary and $\Ind_{B(\mbf Q_p)}^{\GL_3(\mbf Q_p)}(\xi_{s_{\alpha_\tau}\cdot z,v})^{\an,\wedge} = I(\rho_{z,v},\sigma_v)$
\item If $\sigma_v \neq \tau_v$ then there is a short exact sequence
\begin{equation*}
0 \rightarrow \Ind_{B(\mbf Q_p)}^{\GL_3(\mbf Q_p)}(\xi_{s_{\alpha_\tau}\cdot z_{\nc}^{(\tau)},v})^{\an,\wedge} \rightarrow \Ind_{B(\mbf Q_p)}^{\GL_3(\mbf Q_p)}(\xi_{s_{\alpha_\tau}\cdot z,v})^{\an,\wedge} \rightarrow\Ind_{B(\mbf Q_p)}^{\GL_3(\mbf Q_p)}\left(\xi_{s_{\alpha_{\tau^\perp}}s_{\alpha_\tau}\cdot z,v}\right)^{\an,\wedge}\rightarrow 0
\end{equation*}

\end{itemize}
\end{prop}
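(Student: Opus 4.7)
The plan is a local computation at each place $v$, combining the explicit formulas of Proposition \ref{prop:identification}, the Breuil-Herzig extensions \eqref{eqn:e2-e3-ext}--\eqref{eqn:e1-e2-ext}, and the constraints on $\sigma_v$ imposed by $(\dagger)$ through Lemma \ref{lemm:simple-trans-non-critical}.

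For the first bullet ($\sigma_v = \tau_v$), substituting $\sigma_v = \tau_v$ into \eqref{eqn:char-of-xgood} immediately kills every $z$-exponent $h_{\sigma_v(i),v} - h_{\tau_v(i),v}$, leaving
$$\xi_{s_{\alpha_\tau}\cdot z, v} = \psi_{\sigma_v(3),v}\varepsilon^2 \otimes \psi_{\sigma_v(2),v}\varepsilon \otimes \psi_{\sigma_v(1),v},$$
which is the unitary defining character of $I(\rho_{z,v},\sigma_v)$ in \eqref{eqn:cont-definition}. To identify the universal unitary completion of the analytic induction with the continuous induction, I will apply Lemma \ref{lem:dense} to obtain a dense $G$-equivariant inclusion $\Ind_{B(\mbf Q_p)}^{\GL_3(\mbf Q_p)}(\xi)^{\an} \hookrightarrow \Ind_{B(\mbf Q_p)}^{\GL_3(\mbf Q_p)}(\xi)^{C^0}$. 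Since the latter is an admissible unitary Banach representation, the universal property produces a map from the completion whose surjectivity is automatic and whose injectivity follows by repeating the minimal-lattice argument at the end of the proof of Proposition \ref{prop:identification}.

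For the second bullet, the condition $(\dagger)$ together with Lemma \ref{lemm:simple-trans-non-critical} forces the only meaningful case to be $v = v_0$, the unique place where $\rho_{z,v_0}$ is merely indecomposable, $\tau_{v_0}$ a simple transposition, and $\sigma_{v_0} = \tau_{v_0}\tau_{v_0}^\perp$. I will treat the subcase $\tau_{v_0} = (12),\, \sigma_{v_0} = (123)$ explicitly; the subcase $\tau_{v_0} = (23),\, \sigma_{v_0} = (132)$ is symmetric. Direct substitution in \eqref{eqn:char-of-xgood} yields
$$\xi_{s_{\alpha_\tau}\cdot z, v_0} = \psi_{1,v_0}\varepsilon^2 z^{h_{1,v_0}-h_{3,v_0}} \otimes \psi_{3,v_0}\varepsilon z^{h_{3,v_0}-h_{1,v_0}} \otimes \psi_{2,v_0},$$
which matches the character in Proposition \ref{prop:identification}(second bullet) for $\sigma = (123)$ since $\sigma(2) = 3 > 1 = \sigma(3)$; thus the middle term is $\Pi(\rho_{z,v_0},(123))_{\alpha_1}$. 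Swapping $\sigma \leftrightarrow \tau$ in \eqref{eqn:char-of-xgood} produces the unitary character $\psi_{3,v_0}\varepsilon^2 \otimes \psi_{1,v_0}\varepsilon \otimes \psi_{2,v_0}$, so by the first bullet the left term is $I(\rho_{z,v_0},(12))$. For the right term, I will compute $\xi_{s_{\alpha_{\tau^\perp}}s_{\alpha_\tau}\cdot z, v_0}$ by applying the composition of dot actions, which fixes the smooth factor $\theta^{\nc,(\sigma)}$ and twists the algebraic weight by iterating \eqref{eqn:bad-pt-twist}; converting back to $\xi$-coordinates and using the identity $h_{i,v_0} + k_{i,v_0} = i-1$ to cancel the $z$-powers, the result simplifies to the unitary character $\psi_{1,v_0}\varepsilon^2 \otimes \psi_{3,v_0}\varepsilon \otimes \psi_{2,v_0}$, whose universal unitary completion is $I(\rho_{z,v_0},(123))$ by the first bullet. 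Since $(123)(23) = (12)$, the desired short exact sequence is exactly \eqref{eqn:e1-e2-ext} for $\sigma = (123)$.

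The main obstacle is the explicit calculation of $\xi_{s_{\alpha_{\tau^\perp}}s_{\alpha_\tau}\cdot z, v_0}$: both Weyl translations interact non-trivially with the smooth and algebraic parts of the character, and the cancellation of $z$-exponents certifying unitarity relies crucially on the identity $h_{i,v_0} + k_{i,v_0} = i - 1$ relating Hodge-Tate weights and dominant weights. Once this computation is in hand, the identification of the three terms of the proposition's sequence with those of the Breuil-Herzig sequence \eqref{eqn:e1-e2-ext} (or \eqref{eqn:e2-e3-ext} in the symmetric subcase) is essentially automatic.
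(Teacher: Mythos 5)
Your overall strategy for the second bullet --- match $\xi_{s_{\alpha_\tau}\cdot z,v}$ against the characters of Proposition \ref{prop:identification}, identify the middle term with a Breuil--Herzig induction $\Pi(\rho_{z,v},\sigma_v)_{\alpha_i}$, and then read off the sub and quotient from \eqref{eqn:e2-e3-ext}/\eqref{eqn:e1-e2-ext} --- is exactly the paper's proof, and your computations in the case you treat are correct. The problem is your case analysis. The claim that $(\dagger)$ and Lemma \ref{lemm:simple-trans-non-critical} force the only instance of $\sigma_v \neq \tau_v$ to be $v = v_0$ with $\tau_{v_0}$ a transposition and $\sigma_{v_0} = \tau_{v_0}\tau_{v_0}^\perp$ is false. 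At a place $v$ where $\rho_{z,v}$ is \emph{totally} indecomposable, $(\dagger)$ forces $\sigma_v$ to be simple and Lemma \ref{lemm:simple-trans-non-critical}(1) then gives $\tau_v = (1)$; so whenever $\sigma_v$ is a nontrivial simple transposition at such a place one has $\sigma_v \neq \tau_v$ and the second bullet applies there. The same happens at $v_0$ when $z$ is non-critical and $\sigma_{v_0}$ is the non-critical simple transposition. In particular the second bullet is nonvacuous even when $\Sigma_p' = \Sigma_p$ and there is no place $v_0$ at all --- and these are precisely the instances consumed by Corollary \ref{cor:bruhat-relations-inductions} in the proof of Theorem \ref{thm:we-did-it}, so they cannot be discarded.

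The repair is short, because your mechanism goes through verbatim in the missing cases: for $\tau_v = (1)$ and $\sigma_v = (12)$ (resp.\ $(23)$), formula \eqref{eqn:char-of-xgood} gives $\psi_{3}\varepsilon^2\otimes\psi_{1}\varepsilon z^{h_1-h_2}\otimes\psi_2 z^{h_2-h_1}$ (resp.\ $\psi_2\varepsilon^2 z^{h_2-h_3}\otimes\psi_3\varepsilon z^{h_3-h_2}\otimes\psi_1$), which is the character of Proposition \ref{prop:identification} for $\Pi(\rho_{z,v},(12))_{\alpha_2}$ (resp.\ $\Pi(\rho_{z,v},(23))_{\alpha_1}$), and \eqref{eqn:e2-e3-ext} (resp.\ \eqref{eqn:e1-e2-ext}) exhibits it as an extension of $I(\rho_{z,v},\sigma_v)$ by $I(\rho_{z,v},(1)) = I(\rho_{z,v},\tau_v)$; the sub and quotient are then identified with the stated inductions by the first bullet and \eqref{eqn:short-calc}. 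But as written your proof establishes the proposition only in a sub-case that is not the one needed for the main application, so the gap is genuine. (Your treatment of the first bullet, and of the identification $I^{\an,\wedge}\simeq I^{C^0}$ via Lemma \ref{lem:dense} and the minimal-lattice argument, is fine and slightly more detailed than the paper's.)
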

\begin{proof}
The first statement follows directly from the description \eqref{eqn:xi}. We consider the second. By Proposition \ref{prop:identification} combined with \eqref{eqn:e2-e3-ext} and  \eqref{eqn:defn23} (or \eqref{eqn:e1-e2-ext} and \eqref{eqn:defn12}) we have a short exact sequence
\begin{equation*}
 0 \rightarrow I(\rho_{z,v},\tau_v) \rightarrow  \Ind_{B(\mbf Q_p)}^{\GL_3(\mbf Q_p)}(\xi_{s_{\alpha_\tau}\cdot z,v})^{\an,\wedge}  \rightarrow  I(\rho_{z,v},\sigma_v)  \rightarrow   0.
 \end{equation*} 
  The sequence of the proposition now follows from the first bullet point (for the sub) and \eqref{eqn:short-calc} for the quotient.
\end{proof}
The previous result, along with Lemma \ref{lemm:closed-under-bruhat}, allows us to compare the representations $\Ind_{B_{\Sigma_p}}^{G_{\Sigma_p}}(\xi_{s_{\alpha_\tau}\cdot z_{\nc}^{(\sigma)}})^{\an,\wedge}$ for various $\sigma$ (of weight-type $\tau$).

\begin{coro}\label{cor:bruhat-relations-inductions}
Suppose that $z = z_{\nc}^{(\sigma)}$ satisfies $(\dagger)$, is of weight-type $\tau$ and $\sigma' \in (S_3)_{\Sigma_p}$ such that $\tau \leq \sigma' < \sigma$ and $\ell(\sigma) = \ell(\sigma')+1$ in the Bruhat order on $(S_3)_{\Sigma_p}$. Then there is a short exact sequence
\begin{equation*}
0 \rightarrow \Ind_{B_{\Sigma_p}}^{G_{\Sigma_p}}(\xi_{s_{\alpha_\tau}\cdot z_{\nc}^{(\sigma')}})^{\an,\wedge} \rightarrow \Ind_{B_{\Sigma_p}}^{G_{\Sigma_p}}(\xi_{s_{\alpha_\tau}\cdot z_{\nc}^{(\sigma)}})^{\an,\wedge} \rightarrow \Ind_{B_{\Sigma_p}}^{G_{\Sigma_p}}(\xi_{s_{\alpha_{\sigma'^{-1}\sigma}}s_{\alpha_\tau}\cdot z_{\nc}^{(\sigma)}})^{\an,\wedge}  \rightarrow 0.
\end{equation*}
\end{coro}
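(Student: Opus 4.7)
The plan is to reduce the corollary, via a Künneth-type factorization of parabolic induction, to the local short exact sequence at the unique place where $\sigma$ and $\sigma'$ differ, and then invoke Proposition \ref{prop:relations-each-place}.

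First, by Lemma \ref{lemm:closed-under-bruhat} there is a unique place $v_1 \in \Sigma_p$ with $\sigma'_{v_1} \neq \sigma_{v_1}$; at this place $\sigma'_{v_1} = \tau_{v_1}$, while $\sigma'_v = \sigma_v$ for $v \neq v_1$. The same lemma ensures that $z_{\nc}^{(\sigma')}$ still satisfies $(\dagger)$ with the same weight-type $\tau$, so that $s_{\alpha_\tau}$ has consistent meaning for both points. Since formula \eqref{eqn:char-of-xgood} depends at each place only on $\sigma_v$ and $\tau_v$, one has $\xi_{s_{\alpha_\tau}\cdot z_{\nc}^{(\sigma')}, v} = \xi_{s_{\alpha_\tau}\cdot z_{\nc}^{(\sigma)}, v}$ for $v \neq v_1$, and $\xi_{s_{\alpha_\tau}\cdot z_{\nc}^{(\sigma')}, v_1} = \xi_{s_{\alpha_\tau}\cdot z_{\nc}^{(\tau)}, v_1}$.

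Next I would exploit the factorization
\begin{equation*}
\Ind_{B_{\Sigma_p}}^{G_{\Sigma_p}}(\eta)^{\an,\wedge} \simeq \widehat{\bigotimes}_{v \in \Sigma_p}\Ind_{B(\mbf Q_p)}^{\GL_3(\mbf Q_p)}(\eta_v)^{\an,\wedge}
\end{equation*}
to write each of the three inductions in the corollary as a completed tensor product over $\Sigma_p$. Outside $v_1$ the local factors agree, while at $v_1$ the second bullet of Proposition \ref{prop:relations-each-place} supplies a local short exact sequence whose sub is $\Ind(\xi_{s_{\alpha_\tau}\cdot z_{\nc}^{(\tau)}, v_1})^{\an,\wedge}$, which we have just identified with the desired sub. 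For the quotient, the reflection $s_{\alpha_{\sigma'^{-1}\sigma}}$ is supported at $v_1$ and it remains to check it agrees with $s_{\alpha_{\tau^\perp}}$ there; in the critical case this is a short computation in $S_3$ using that $\sigma_{v_1} = \tau_{v_1}\tau_{v_1}^\perp$ (forced by $\sigma_{v_1}\neq \tau_{v_1}$ via the classification in Lemma \ref{lemm:simple-trans-non-critical}), giving $\tau_{v_1}^{-1}\sigma_{v_1} = \tau_{v_1}^\perp$; in the non-critical case at $v_1$ the action of $s_{\alpha_\tau}$ is trivial and the identification follows directly from \eqref{eqn:char-of-xgood}.

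The main obstacle is establishing exactness of the completed tensor product with the fixed admissible Banach factors at $v \neq v_1$. My approach is to first lift the local sequence at $v_1$ to a strict short exact sequence at the level of locally analytic inductions, where the factorization above is well-known to be exact on spaces of compact type, and then pass to universal unitary completions; this preserves the short exactness in our setting because each term at $v_1$ is admissible continuous and admits a dense locally analytic induction, so the universal completion functor is well-behaved on the data at hand.
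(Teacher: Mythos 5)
Your proposal is correct and follows essentially the same route as the paper's proof: isolate the unique place $v_1$ via Lemma \ref{lemm:closed-under-bruhat}, apply the second bullet of Proposition \ref{prop:relations-each-place} there, and complete by taking the completed tensor product with the (unchanged) factors at $v \neq v_1$. Your extra checks --- matching $s_{\alpha_{\sigma'^{-1}\sigma}}$ with $s_{\alpha_{\tau^\perp}}$ at $v_1$ and justifying exactness of the completed tensor product --- are details the paper leaves implicit, not a different argument.
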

\begin{proof}
Recall that Lemma \ref{lemm:closed-under-bruhat} says that if $\sigma'$ is between $\tau$ and $\sigma$ in the Bruhat order in $(S_3)_{\Sigma_p}$ then $z_{\nc}^{(\sigma')}$ satisfies $(\dagger)$, is of weight type $\tau$ and, moreover, if $\sigma'$ immediately precedes $\sigma$ then $\sigma'(1_{\Sigma_p-\{v_1\}}, \tau_{v_1}\sigma_{v_1}) = \sigma$ for a unique place $v_1 \in \Sigma_p$.

At the unique place $v_1$ for which $\sigma'_{v_1} \neq \sigma_{v_1}$, we have $\sigma'_{v_1} = \tau_{v_1}$. Thus by Proposition \ref{prop:relations-each-place} we have a short exact sequence of unitary $G(F_{v_1}^+)\simeq \GL_3(\mbf Q_p)$-representations
\begin{equation*}
0 \rightarrow \Ind_{B_{v_1}}^{G_{v_1}}(\xi_{s_{\alpha_\tau}\cdot z_{\nc}^{(\sigma')},v_1})^{\an,\wedge} \rightarrow 
\Ind_{B_{v_1}}^{G_{v_1}}(\xi_{s_{\alpha_\tau}\cdot z_{\nc}^{(\sigma)},v_1})^{\an,\wedge} \rightarrow \Ind_{B_{v_1}}^{G_{v_1}}(\xi_{s_{\alpha_{\sigma'^{-1}\sigma}}s_{\alpha_\tau}\cdot z_{\nc}^{(\sigma')},v_1})^{\an,\wedge} \rightarrow 0.
\end{equation*}
Since $\sigma'_{v} = \sigma_{v}$ for $v \neq v_1$, the corollary now follows by taking the completed tensor product of this sequence with $\widehat{\otimes}_{v \neq v_1} \Ind(\xi_{s_{\alpha_\tau}\cdot z_{\nc}^{(\sigma)},v})^{\an,\wedge}$.
\end{proof}

Returning to the relationship between locally analytic principal series and completed cohomology, recall that if $z$ satisfies $(\dagger)$ then Corollary \ref{coro:dagger-map} implies that there is a non-zero map
\begin{equation*}
I_{\an}(z)^{\wedge}  \rightarrow \widehat H^0(K^p)^{\lambda}_{L}
\end{equation*}
where $I_{\an}(z)^{\wedge} := \Ind(\xi_{s_{\alpha_\tau}\cdot z)})^{\an,\wedge}$ and $\tau$ is the weight-type of $z$. By Corollary \ref{cor:bruhat-relations-inductions}, and induction on $\ell(\sigma)$, the representation $I_{\an}(z)^{\wedge}$ contains a unique irreducible continuous principal series $I_{\an}(z_{\nc}^{(\tau)})^{\wedge}\simeq I_{C^0}(z_{\nc}^{(\tau)})$.

\begin{prop}\label{prop:multiplicity}
Suppose that  $z$ satisfies $(\dagger)$. Then:
\begin{enumerate}
\item The induced map 
\begin{equation}\label{eqn:this-one-is-an-iso}
\Hom_{G_{\Sigma_p}}\left(I_{\an}(z)^{\wedge}, \widehat H^0(K^p)^{\lambda_z}_{L}\right) \rightarrow \Hom_{G_{\Sigma_p}}\left(I_{C^0}(z_{\nc}^{(\tau)}), \widehat H^0(K^p)^{\lambda_z}_{L}\right)
\end{equation}
is injective.
\item Every non-zero map $I_{\an}(z)^{\wedge} \rightarrow \widehat H^0(K^p)^{\lambda_z}_{L}$ is injective.
\item If $z$ is non-critical (i.e. $\tau = (1)_{\Sigma_p}$) then the map \eqref{eqn:this-one-is-an-iso} is an isomorphism.
\end{enumerate}
\end{prop}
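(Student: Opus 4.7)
The proof rests on a canonical filtration $F_0 \subset F_1 \subset \cdots \subset F_n = I_{\an}(z)^{\wedge}$ obtained by iterating Corollary \ref{cor:bruhat-relations-inductions} along a maximal chain $\tau = \sigma_0 < \sigma_1 < \cdots < \sigma_n = \sigma$ in the Bruhat order on $(S_3)_{\Sigma_p}$, with $\ell(\sigma_{i+1}) = \ell(\sigma_i) + 1$ (Lemma \ref{lemm:closed-under-bruhat} guarantees each intermediate $z_{\nc}^{(\sigma_i)}$ still satisfies $(\dagger)$ with weight-type $\tau$). The bottom term $F_0$ equals $I_{C^0}(z_{\nc}^{(\tau)})$, identified with its own completion by the first bullet of Proposition \ref{prop:relations-each-place} together with the fact that the universal unitary completion of a locally analytic induction from a unitary character coincides with the continuous induction. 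Each successive quotient $F_{i+1}/F_i$ is the universal unitary completion of a locally analytic induction whose inducing character $\chi'$ is strongly linked to $\chi_{s_{\alpha_\tau}\cdot z}$ and distinct from it.

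For part (1), I would proceed by induction on $i$. Any continuous $G_{\Sigma_p}$-map $F_{i+1}/F_i \to \widehat H^0(K^p)^{\lambda_z}_L$ sends the dense locally analytic subrepresentation into the locally analytic vectors, so by the universal property of the unitary completion it factors through a locally analytic $G_{\Sigma_p}$-map from the underlying analytic principal series to $\widehat H^0(K^p)^{\lambda_z}_{L,\an}$; Emerton's general injection $\Hom_{G_{\Sigma_p}}(\Ind(\chi')^{\an},-) \hookrightarrow J^{\chi'}_{B_{\Sigma_p}}(-)$ then lands this Hom-space inside the Jacquet eigenspace at $\chi'$. Since $s_{\alpha_\tau}\cdot z$ is not bad (Proposition \ref{prop:good-points}), $(\chi',\lambda_z)\notin X(L)$ and this eigenspace vanishes, so $\Hom(F_{i+1}/F_i,\widehat H^0)=0$. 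Left-exactness of Hom applied to $0\to F_i\to F_{i+1}\to F_{i+1}/F_i\to 0$ gives an injection $\Hom(F_{i+1},\widehat H^0)\hookrightarrow \Hom(F_i,\widehat H^0)$, and composition yields (1). For part (2), a non-zero $f: I_{\an}(z)^{\wedge} \to \widehat H^0$ has $f|_{F_0}\neq 0$ by (1), and is injective on $F_0$ since this is an irreducible continuous principal series (the criterion \eqref{eqn:unramified-criterion} holds by generic ordinariness of $\rho_z$). A non-zero $\ker f$ disjoint from $F_0$ would map non-trivially into some $F_{i+1}/F_i$ and, through the socle of its image, give a non-zero continuous principal-series embedding at a strongly-linked character, again contradicting non-badness as in (1).

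For part (3), with $\tau=(1)_{\Sigma_p}$ the element $s_{\alpha_\tau}$ is trivial, the point $z$ itself is non-critical and hence not bad (Proposition \ref{prop:bad-implies-critical}), and the argument of (1) applied at $z$ directly gives injectivity of the restriction map. For surjectivity I would count dimensions: the adjunction formula applied at both $z$ and $z_{\nc}^{(1)}$ (both classical and not bad) identifies $\Hom(I_{\an}(z)^{\wedge},\widehat H^0)$ and $\Hom(I_{C^0}(z_{\nc}^{(1)}),\widehat H^0)$ as subspaces of the Jacquet eigenspaces $J^{\chi_z}$ and $J^{\chi_{z_{\nc}^{(1)}}}$ respectively, each of dimension equal to the classical automorphic multiplicity $m$ of $\lambda_z$ via the decomposition \eqref{eqn:auto-forms} together with the standard fact that different refinements of the unramified $\pi_p$ yield Jacquet eigenspaces of the same dimension. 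Each of the $m$ classical copies of $\pi$ in $\widehat H^0$ furnishes both a continuous $G_{\Sigma_p}$-embedding $I_{C^0}(z_{\nc}^{(1)})\hookrightarrow \widehat H^0$ and, via a bounded classical Jacquet eigenvector at $\chi_z$, an extension to $I_{\an}(z)^{\wedge}$, forcing both Hom-spaces to have dimension exactly $m$; combined with the injectivity from (1), this makes the restriction map an isomorphism. The main obstacle lies in this final dimension count, which requires carefully matching the two Jacquet eigenspaces to the same multiplicity $m$ by tracking intertwining relations among the smooth refinements of $\pi_p$.
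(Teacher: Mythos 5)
Your setup and your part (1) follow the paper's own route: the filtration of $I_{\an}(z)^{\wedge}$ coming from Corollary \ref{cor:bruhat-relations-inductions} along a chain from $\tau$ to $\sigma$, the identification of the bottom piece with $I_{C^0}(z_{\nc}^{(\tau)})$, and the vanishing of $\Hom$ out of each higher graded piece because $s_{\alpha_\tau}\cdot z$ is not bad. One caveat: the map $\Hom_{G_{\Sigma_p}}(\Ind(\chi')^{\an},-)\rightarrow J_{B_{\Sigma_p}}^{\chi'}(-)$ is \emph{not} known to be injective for arbitrary $\chi'$ --- in the diagram of Section \ref{subsec:adjunction-proof} it is the composite through (3a), whose injectivity is precisely what the not-bad hypothesis is for. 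The vanishing you need still holds, but the honest justification is Proposition \ref{prop:badness} plus transitivity of strong linkage: every constituent $L(\chi'')$ of $M(\chi')^{\vee}$ has $\chi''$ strongly linked to $\chi_{s_{\alpha_\tau}\cdot z}$ and distinct from it, so the whole $\Hom$-space dies by d\'evissage.

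Part (2) has a genuine gap. If $\ker f\neq 0$ and $\ker f\cap F_0=0$, the irreducible continuous principal series you extract from the socle of the image of $\ker f$ in $F_{i+1}/F_i$ sits inside $\ker f$; it is annihilated by $f$ and therefore never produces an embedding into $\widehat H^0(K^p)^{\lambda_z}_{L}$, so there is nothing for non-badness to contradict. To close the argument you need either that the socle of $I_{\an}(z)^{\wedge}$ is exactly $F_0$ --- which rests on the non-splitness of the extensions \eqref{eqn:e2-e3-ext} and \eqref{eqn:e1-e2-ext} and is absent from your write-up --- or the paper's argument on the image side: since no constituent of $F_{i+1}/F_i$ occurs in $\widehat H^{0,\lambda_z}$, injectivity of $f|_{F_i}$ forces $F_{i+1}=F_i\oplus\ker(f|_{F_{i+1}})$, and non-splitness then kills the kernel. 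Part (3) has a second, different gap: your dimension count needs the Jacquet eigenspaces $J^{\chi_z}$ and $J^{\chi_{z_{\nc}^{(1)}}}$ to be \emph{entirely classical}, i.e.\ of dimension equal to the classical multiplicity. The obstacle is not the intertwining relations among smooth refinements (those only compare the two classical eigenspaces with each other); it is the classicality criterion of Theorem \ref{thm:chenevier-classicality}, applied at the two non-critical points, which is where the \'etaleness of the weight map and the irreducibility of $\rho_z$ enter. Without that input the $p$-adic eigenspaces could a priori be strictly larger than their classical subspaces and the count collapses.
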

\begin{proof}
The first and third assertions are clear for the point $z = z_{\nc}^{(\tau)}$. The second assertion for $z = z_{\nc}^{(\tau)}$ follows from the irreducibility of $I_{C^0}(z_{\nc}^{(\tau)})$.

By Proposition \ref{prop:good-points}, $s_{\alpha_\tau}\cdot z$ is not a bad point. Note that $s_{\alpha_\tau}\cdot z$ is $\alpha_{\sigma'^{-1}\sigma}$-dominant for all $\tau \leq \sigma' < \sigma$.  Since $s_{\alpha_\tau}\cdot z$ is not bad, we have that
\begin{equation}\label{eqn:bad-hom}
\Hom_{G_{\Sigma_p}}\left(\Ind_{B_{\Sigma_p}}^{G_{\Sigma_p}}(\xi_{s_{\alpha_{\sigma'^{-1}\sigma}}s_{\alpha_\tau}\cdot z})^{\an,\wedge}, \widehat H^0(K^p)^{\lambda_z}_{L} \right) = (0)
\end{equation}
for any $\sigma'$ immediately preceding $\sigma$.

The first part of the proposition now follows by induction on $\ell(\sigma)$ from the case of $\sigma = \tau$, using Corollary  \ref{cor:bruhat-relations-inductions}.

For the second point, note that if $\tau \leq \sigma'$ and $\sigma'$ immediately precedes $\sigma$ in the Bruhat order then Proposition \ref{prop:relations-each-place} and Corollary \ref{cor:bruhat-relations-inductions} imply that $\Ind_{B_{\Sigma_p}}^{G_{\Sigma_p}}(\xi_{s_{\alpha_{\sigma'^{-1}\sigma}}s_{\alpha_\tau}\cdot z^{(\sigma')}_{\nc}})^{\an,\wedge}$ has a composition series whose irreducible factors are of the form 
\begin{equation*}
I_{C^0}(s_\beta s_{\alpha_\tau}\cdot z_{\nc}^{(\sigma'')})
\end{equation*}
 where $\tau \leq \sigma'' < \sigma$ and $\beta$ is some simple positive root. Since the points $s_{\alpha_\tau} \cdot z_{\nc}^{(\sigma'')}$ are all not bad, none of these irreducible constituents appear in the space $\widehat H^0(K^p)^{\lambda_z}_{L}$. Thus, not only does the space \eqref{eqn:bad-hom} vanish, but a non-zero map $I_{\an}(z^{(\sigma)}_{\nc})^{\wedge} \rightarrow \widehat H^0(K^p)_{L}^{\lambda_z}$ is injective if and only if its restriction to the subrepresentation $I_{\an}(z^{(\sigma')}_{\nc})^{\wedge}$ (for any/some $\tau \leq \sigma'$ immediately preceeding $\sigma$ in the Bruhat order) is injective. By induction on $\ell(\sigma)$ we conclude that a non-zero map $I_{\an}(z_{\nc}^{(\sigma)})^{\wedge} \rightarrow \widehat H^0(K^p)^{\lambda_z}_{L}$ is injective if and only if its restriction to $I_{C^0}(z_{\nc}^{(\sigma)})$ is injective. Thus the second statement is true, by the first paragraph of the proof.

For the third point, as $z = s_{\alpha_\tau}\cdot z$ is non-critical, hence not bad by Proposition \ref{prop:bad-implies-critical}, we have by Theorem \ref{thm:chenevier-classicality}
\begin{equation*}
\dim F(\mc C_{\theta_z\delta_k},K^p)^{(\lambda_z),\mc A_p} = \dim  F(W_k^\vee,K^p)^{(\lambda_z),\mc A_p=\theta_z}.
\end{equation*}
Taking $z = z_{\nc}^{(1)}$ we also have
\begin{equation*}
\dim F(\mc C_{\theta_{z_{\nc}^{(1)}} \delta_k},K^p)^{(\lambda_{z_{\nc}^{(1)}}),\mc A_p}
= \dim  F(W_k^\vee,K^p)^{(\lambda_{z_{\nc}^{(1)}}),\mc A_p=\theta_{z_{\nc}^{(1)}}}.
\end{equation*}
But by the generic ordinary assumption on $z$, the two {\em classical} spaces on the right hand side of the previous two equations have the same dimension. After taking eigenspaces of all the spaces involved and passing to the Jacquet module side (via Proposition \ref{prop:comparision}), Theorem \ref{theo:analytic-maps} implies the final point of our proposition.
\end{proof}

We are now in a position to prove Theorem \ref{thm:we-did-it}. 

\begin{proof}[Proof of Theorem \ref{thm:we-did-it}]\label{proof:big-theorem}
Suppose that $z \in X_{\cl}(L)$ is generic ordinary and that $\rho_{z,v}$ is totally indecomposable above $p$. Without loss of generality we assume that $z = z_{\nc}^{(1)}$. We then consider the collection of points
\begin{equation*}
\mscr Z = \{ z_{\nc}^{(\sigma)} : \sigma \text{ is simple}\} = \{z_{\nc}^{(\sigma)} : z_{\nc}^{(\sigma)} \text{ satisfies $(\dagger)$}\},
\end{equation*}
which we partially order by the element $\sigma \in (S_3)_{\Sigma_p}$. We then let
\begin{align*}
\mscr Y &= \{ z_{\nc}^{(\sigma)} \in \mscr Z : \sigma \text{ is maximal in the Bruhat order on $\mscr Z$}\}\\
&= \{z_{\nc}^{(\sigma)} \in \mscr Z : \sigma_v \neq (1) \text{ for any $v \in \Sigma_p$}\}.
\end{align*}
Let $\Pi(\rho_{z,p})_{\mscr Y}$ be the amalgamated product over $I_{C^0}(z_{\nc}^{(1)})$
\begin{equation*}
\Pi(\rho_{z,p})_{\mscr Y} := \prod_{\substack{I_{C^0}(z_{\nc}^{(1)}) \\ z \in \mscr Y}} I_{\an}(z)^{\wedge}.
\end{equation*}
By using Proposition \ref{prop:relations-each-place}, we have $\Pi(\rho_{z,p})_{\mscr Y} = \Pi(\rho_{z,p})^{\ord}$. Thus, Proposition \ref{prop:multiplicity}(3) implies that the natural inclusion $I_{C^0}(z_{\nc}^{(1)}) \subset \Pi(\rho_{z,p})^{\ord}$ induces an isomorphism
\begin{equation*}
\Hom_{G_{\Sigma_p}}(\Pi(\rho_{z,p})^{\ord}, \widehat H^0(K^p)^{\lambda_z}_{L}) \simeq \Hom_{G_{\Sigma_p}}(I_{C^0}(z_{\nc}^{(1)}), \widehat H^0(K^p)^{\lambda_z}_{L}).
\end{equation*}
Since the left hand space is non-zero, and $I_{C^0}(z_{\nc}^{(1)})$ is irreducible, Proposition \ref{prop:multiplicity}(2) applied to $z$ implies there is an embedding
\begin{equation*}
\Pi(\rho_{z,p})^{\ord} \hookrightarrow \widehat H^0(K^p)_{L}^{\lambda_z}.
\end{equation*}
%
This completes the proof of Theorem \ref{thm:we-did-it}.
\end{proof}
\begin{rema}
Before moving on to prove Theorem \ref{theo:F^+=Q} we make a small remark. In the proof we only used the points $z_{\nc}^{(\sigma)}$ which appear in the set $\mscr Y$. But equivalently (at least after taking $\Hom$) the amalgamated sum could have been taken over the whole set $\mscr Z$ by Proposition \ref{cor:bruhat-relations-inductions} and Proposition \ref{prop:multiplicity}. Thus we have used all the points that satisfied $(\dagger)$ to produce the representation $\Pi(\rho_{z,p})^{\ord}$ inside $\widehat H^0(K^p)^{\lambda_z}_{L}$.
\end{rema}

We now give the proof of Theorem \ref{theo:F^+=Q}. It is in the same spirit as the proof of Theorem \ref{thm:we-did-it} though we use the existence of companion points.
\begin{proof}[Proof of Theorem \ref{theo:F^+=Q}]\label{proof:thm-special-case}
We fix the point $z = z_{\nc}^{(1)} \in X_{\cl}(L)$. Our working assumption is that $\rho_{z,p}$ is indecomposable, but not totally indecomposable. Thus going back to Lemma \ref{lemm:simple-trans-non-critical} among the six classical points $z_{\nc}^{(\sigma)}$ there are two which are critical and four which are not.

Specifically, we denote by $\tau$ the unique transposition such that $z_{\nc}^{(\tau)}$ is critical (of weight-type $\tau$). In this notation, the four points $z_{\nc}^{(1)}, z_{\nc}^{(\tau^{\perp})}$, $z_{\nc}^{(\tau)}$ and $z_{\nc}^{(\tau\tau^{\perp})}$ all satisfy $(\dagger)$; the first two in that list are non-critical and the second two are critical of weight-type $\tau$. By comparing Definition \ref{defi:ord} and \eqref{eqn:xi} via Proposition \ref{prop:identification} we get that 
\begin{equation*}
\Pi(\rho_{z,p})^{\ord} = I_{\an}(z_{\nc}^{(\tau^{\perp})})^{\wedge} \oplus I_{\an}(z_{\nc}^{(\tau\tau^{\perp})})^{\wedge} =: \Pi_1 \oplus \Pi_2.
\end{equation*} 
Moreover, the Jordan-Holder constituents of the two summands appearing on the right-hand side are pairwise distinct (see Corollary \ref{cor:bruhat-relations-inductions} for example, and compare with the proof of Theorem \ref{thm:we-did-it}). Combining this observation with Proposition \ref{prop:multiplicity}(2) we see that a non-zero morphism $\Pi(\rho_{z,p})^{\ord} \rightarrow \widehat H^0(K^p)^{\lambda_z}_{L}$ is injective. Since $\Hom_{G_{\Sigma_p}}(\Pi_i,\widehat H^0(K^p)^{\lambda_z}_{L}) \neq 0$, this completes the proof of Theorem \ref{theo:F^+=Q}.
\end{proof}


\begin{thebibliography}{9}


\bibitem[BC]{bc} J. Bella\"iche, G. Chenevier, ``Families of Galois representations and Selmer groups", Asterisque 324, Soc. Math. France, 314 p. (2009).

\bibitem[Ber]{ber2} J. Bergdall, ``Ordinary modular forms and companion points on the eigencurve", J. Number Theory 134 (2014), no. 1, 226-239.

\bibitem[Ber2]{ber-thesis} J. Bergdall, ``On the variation of $(\varphi,\Gamma)$-modules over $p$-adic families of automorphic forms'', PhD thesis, Brandeis University.

\bibitem[Bre]{br} C. Breuil, ``Vers le socle localement analytique pour $\GL_n$, II", preprint (2013).


\bibitem[BH]{bh} C. Breuil, F. Herzig, ``Ordinary representations of $G(\mbf{Q}_p)$ and fundamental algebraic representations", preprint (2013).

\bibitem[BE]{be} C. Breuil, M. Emerton, ``Representations p-adiques ordinaires de $\GL_2(\mbf{Q}_p)$ et compatibilite local-global", Asterisque 331 (2010) 255-315.

\bibitem[Che1]{che} G. Chenevier ``On the infinite fern of Galois representations of unitary type", Ann. Sci. E.N.S. 44, 963-1019 (2011).

\bibitem[Che2]{che2} G. Chenevier ``Une application des vari\'et\'es de {H}ecke des groups unitaires'', preprint. (2009)


\bibitem[Cho]{cho3} P. Chojecki, ``Density of crystalline points on unitary Shimura varieties", Int. J. Number Theory, Vol. 9, No. 3 (2013) 1-15.

\bibitem[Col]{col} P. Colmez ``R\'epresentations triangulines de dimension 2", Asterisque 319 (2008), 213-258.

\bibitem[CDP]{cdp} P. Colmez, G. Dospinescu, V. Paskunas, ``The $p$-adic local Langlands correspondence for $\GL_2(\mbf{Q}_p)$", preprint (2013).

\bibitem[Eme1]{em1} M. Emerton, ``On the interpolation of systems of eigenvalues attached to automorphic Hecke eigenforms", Invent. Math. 164 (2006), no. 1, 1-84.

\bibitem[Eme2]{em2} M. Emerton, ``Jacquet modules of locally analytic representations of p-adic reductive groups I. Construction and first properties", Ann. Sci. E.N.S. 39 (2006), no. 5, 775-839.

\bibitem[Eme3]{em3} M. Emerton, ``Jacquet modules of locally analytic representations of p-adic reductive groups II. The relation to parabolic induction", to appear in J. Institut Math. Jussieu. 

\bibitem[Eme4]{em4} M. Emerton, ``A local-global compatibility conjecture in the p-adic Langlands programme for $\GL_{2 / \mathbb{Q}}$", Pure and Applied Math. Quarterly 2 (2006), no. 2, 279-393. 

\bibitem[Eme5]{em5} M. Emerton, ``Local-global compatibility in the p-adic Langlands programme for $\GL_2/\mb{Q}$", preprint (2011). 


\bibitem[Jon]{jones} O. Jones, ``An analogue of the BGG resolution for locally analytic principal series", J. Number Theory 131 (2011) 1616--1640.

\bibitem[Hum]{hum} J. Humphreys, ``Representations of semisimple Lie algebras in the BGG category $\mc O$'',
Graduate Studies in Mathematics, 94. American Mathematical Society, Providence, RI,  2008. xvi+289 pp.

\bibitem[KPX]{kpx} K. Kedlaya, J. Pottharst and L. Xiao, ``Cohomology of arithmetic families of $(\phi, \Gamma)$-modules", to appear in J.A.M.S. 

\bibitem[Kis]{kis-omf} M. Kisin, ``Overconvergent modular forms and the Fontaine-Mazur conjecture", Invent. Math. 153(2) (2003), 373-454. 

\bibitem[Loe]{loe} D. Loeffler, ``Overconvergent algebraic automorphic forms", Proc. London Math. Soc. 102 (2011), no. 2, 193-228.

\bibitem[Maz]{maz} B. Mazur ``Deforming Galois representations", in ``Galois groups over $\mbf{Q}$", ed. Ihara et al., Springer-Verlag (1987).

\bibitem[Liu]{liu} R. Liu, ``Semi-stable periods of finite slope families", preprint (2013).

\bibitem[Rou]{rou} R. Rouquier, ``Caract\'erisation des caract\`eres et pseudo-caract\`eres", J. Algebra, 180(2), 571-586 (1996).

\bibitem[ST1]{st} P. Schneider, J. Teiltelbaum, ``Locally analytic distributions and p-adic representation theory, with applications to $\GL_2$", J.A.M.S. 15, 443-468 (2002). 

\bibitem[ST2]{st4} P. Schneider, J. Teiltelbaum, ``Duality for admissible locally analytic representations", Represent. Theory 9, 297-326 (electronic), 2005.  

\bibitem[Tay]{tay} R. Taylor, ``Galois representations associated to {S}iegel modular forms of low weight", Duke Math. J. 63, 281-332 (1991).


\end{thebibliography}
\end{document}